\date{}
\newcommand{\Rmnum}[1]{\expandafter\@slowromancap\romannumeral #1@}
\numberwithin{equation}{section}
\newtheorem{definition}{Definition}[section]
\newtheorem{lemma}{Lemma}[section]
\newtheorem{proposition}{Proposition}[section]
\newtheorem{remark}{Remark}[section]
\newtheorem{theorem}{Theorem}[section]
\newcommand\theref[1]{Theorem~\ref{#1}}
\newcommand\lemref[1]{Lemma~\ref{#1}}
\newcommand\secref[1]{Section~\ref{#1}}
\title{Time-periodic transonic shock solution in divergent nozzles}
\author[a]{Xiaomin Zhang\thanks{{e}-mail: zxm15924687@163.com}}
\author[b]{Peng Qu\thanks{{e}-mail: pqu@fudan.edu.cn}}
\author[a]{Huimin Yu \thanks{Corresponding author {e}-mail: hmyu@sdnu.edu.cn}}
\affil[a]{Department of mathematics, Shandong Normal University, Jinan 250014 China}
\affil[b]{School of Mathematical Sciences, Shanghai Key Laboratory for Contemporary Applied Mathematics, Fudan University, Shanghai 200433 China}
\begin{document}
\begin{sloppypar}
\date{}
\maketitle
\begin{center}
\begin{minipage}{130mm}{\small
\textbf{Abstract}:
We demonstrate that it is possible to control a normal transonic shock to move periodically by adjusting the boundary conditions at the entrance or the exit of the tube, for which, the phenomena has been observed in engineering. In this paper, we describe the gas by a quasi-one-dimensional compressible Euler equations with temporal periodic boundary conditions and prove the global existence and dynamical stability of the time-periodic transonic shock solution with an iteration method. The major difficulty is to determine the position of the moving shock front, which can be obtained by a free boundary problem in the subsonic domain. We decouple this free boundary problem by the $Rankine-Hugoniot$ conditions and a two-step iteration process.\\
\textbf{Keywords}: Isothermal compressible Euler equations, Quasi-one-dimensional flow, Global existence, Dynamical stability, Transonic shock wave, Time-periodic solution
\\
\textbf{Mathematics Subject Classification 2010}:  35B10, 35Q31, 74J40.}
\end{minipage}
\end{center}
\section{Introduction}
\indent\indent
The behavior of transonic shocks is sensitive to both upstream and downstream flow conditions. Until now, the dynamic response of transonic shock waves to unsteady perturbation in flow properties has not been well understood mathematically, which causes difficulties in predicting shock motion reliably. In 2008, Bruce and Babinsky conducted an experimental study on this topic in~\cite{Bruce}, where the authors control a normal shock to move periodically in a duct with parallel walls by adjusting the downstream flow properties, while the supersonic flow ahead the normal shock remain unchanged. In this paper, we investigate this phenomenon from the perspective of mathematical analysis.

In 1940's, Courant and Friedrichs first studied transonic shock waves in De Laval nozzles in~\cite{Courant}. They discussed that under the conditions that the outlet pressure $p_{e}$ is appropriately high and the flow entering the nozzle maintains a supersonic speed beyond the throat, a shock wave will appear at a specific position in the divergent part of the nozzle. In 1982-1984, Liu $\emph{et al.}$~\cite{Liu, LIu, Lic, Glaz} used the methods of Lax and Glimm for the one-dimensional hyperbolic conservation laws to discuss the stability of the shock solution.~Xin and Yin~\cite{Xin} studied the global existence, stability and long time asymptotic behavior of a symmetric transonic shock in Euler flows, with the exit pressure $P_{e}$, provided that the initial shock is located in the symmetrically diverging part of the $2$-D or $3$-D nozzle in 2008. Additionally, they also proved that when the initial shock is situated in the symmetric converging part of the nozzle, the shock is structurally instable in a long time sense. In 2013, Rauch $\emph{et al.}$~\cite{Rauch} reconsidered the global stability of steady transonic shock solutions to isentropic compressible Euler equations in divergent quasi one-dimensional nozzles.~They removed some constraints on the duct and the intensity of shock waves by finding an exponentially decaying energy estimate for a linearized problem. For results on the transonic shock solution of steady Euler equations, we refer to~\cite{Chen,Duan,Liao,Yin,YuanH,FangB,XinZ}.

At the same time, the time-periodic solution attracts much study in recent years.~For example, Luo~\cite{Luo} studied the existence of the time-periodic solution to the piston problem if the piston motion is time-periodic.~Matsumura and Nishida~\cite{Matsumura} studied the existence of the time-periodic solution to the one dimensional isothermal viscous gas equations if the force or the piston motion is periodic in time. Temple and Young~\cite{Temple} studied the existence of one-dimensional space and time-periodic solutions for $3\times3$ compressible Euler equations.~Yuan~\cite{Yuan} analyzed the supersonic solution of isentropic compressible Euler equations, which transitioned into a time-periodic solution over a finite time interval if the system subjects to time-periodic boundary conditions in 2019.~Then, in 2020, Qu~\cite{Qu} investigated the existence of time-periodic classical solutions for one-dimensional strictly hyperbolic systems with dissipative and time-periodic boundary conditions. \cite{Fang,Qup,Zhangx} employed a newly modified iteration scheme to investigate the temporal periodic solutions of quasilinear hyperbolic systems with a K-weakly dissipative structure or Euler equations with damping. For more results on this topic, we refer to~\cite{QUPENG,QuP,Yuh,Zhang}.

In this paper, we study the global existence and dynamical stability of the time-periodic transonic shock solution to quasi-one-dimensional isothermal compressible Euler equations
\begin{equation}\label{a1}
\left\{\begin{aligned}
&\rho_{t}+(\rho u)_{x}=-\frac{a'(x)}{a(x)}\rho u,\\
&(\rho u)_{t}+(\rho u^{2}+\vartheta^{2}\rho)_{x}=-\frac{a'(x)}{a(x)}\rho u^{2}
\end{aligned}\right.
\end{equation}
for $t>0, x\in[0,L]$ with the following initial and boundary conditions:
\begin{align}
&(\rho,u)(t,x)\big|_{t=0}=(\rho,u)(0,x),\label{ABa1}\\
&(\rho,u)(t,x)\big|_{x=0}=(\rho,u)(t,0),\quad \rho(t,x)\big|_{x=L}=\rho(t,L).\label{ABa2}
\end{align}
Here $\rho, u$ represent the density and velocity of gas in the duct, $\vartheta>0$ is the sound speed which will be assumed to be 1 in the following, and $a(x)>0$ denotes the cross section area of the nozzle satisfying $\|(\frac{a'(x)}{a(x)})'\|_{C^{0}}\leq M$ for some positive constant $M$.~We will first find an initial data $(\rho,u)(0,x)$ which guarantees that a periodical perturbation on the boundary conditions \eqref{ABa2} can trigger a time-periodic transonic shock solution to system \eqref{a1}, and then we will illustrate the obtained time-periodic transonic shock solution is stable with respect to small perturbations of the initial data. In the proof, the time steady transonic shock solution $(\rho_{*},u_{*})(x)$, which satisfies the following IVP
\begin{align}\label{a6}
\left\{\begin{aligned}
&\frac{d(\rho_{*} u_{*})}{dx}=-\frac{a'(x)}{a(x)}\rho_{*} u_{*},\\
&\frac{d(\rho_{*} u_{*}^{2}+\rho_{*})}{dx}=-\frac{a'(x)}{a(x)}\rho_{*} u_{*}^{2},
\end{aligned}\right.
\end{align}
\begin{align}
(\rho_{*},u_{*})(x)\big|_{x=0}=(\rho_{l,*}(0),u_{l,*}(0)),\quad \rho_{*}(x)\big|_{x=L}=\rho_{r,*}(L),\label{aaaa6}
\end{align}
will be used as a background solution, and it has been studied intensively in references~\cite{Liu, Embid}.
\begin{definition}\label{D1}
We call a piecewise smooth solution of~\eqref{a1} or~\eqref{a6} the {\rm{transonic shock solution}} if it is separated by a shock wave, located at $x=\gamma(t)$, connecting a supersonic state on the left side to a subsonic state on the right side and satisfying the corresponding $Rankine-Hugoniot$ conditions (see for instance~\eqref{b5}).
 \end{definition}
 According to the study of~\cite{Embid} and~\cite{Liu} for a divergent nozzle, when the inlet state $(\rho_{l,*}(0),u_{l,*}(0))$ and outlet density $\rho_{r,*}(L)$ satisfy a certain relationship,
there will be a time steady transonic shock solution $(\rho_{*},u_{*})(x)$ in the duct with $(\rho_{l,*},u_{l,*})(x)$ supersonic in $0\leq x<x^{*}$ and $(\rho_{r,*},u_{r,*})(x)$ subsonic in $x^{*}<x\leq L$ respectively. Here the location of the shock wave is denoted as $x=x^{*}$. Moreover, it is easy to see that we can extend $(\rho_{l,*},u_{l,*})(x)$ to be a supersonic solution of \eqref{a6} on $[0,x^*+\delta]$ for some $\delta>0$, which coincides with $(\rho_{l,*},u_{l,*})(x)$ on $[0,x^{*}]$. Similarly, we can also extend $(\rho_{r,*},u_{r,*})(x)$ to be a subsonic solution on $[x^*-\delta, L]$. In this paper, we still use $(\rho_{l,*},u_{l,*})(x)$ and $(\rho_{r,*},u_{r,*})(x)$ to denote these two extended solutions.
%We denote this steady transonic shock wave as $B(\rho_{l,*}(0),u_{l,*}(0),$$\rho_{r,*}(0),\frac{a'(x)}{a(x)})$.

Our problems are: If boundary conditions of system~\eqref{a1} are time-periodic perturbations of $(\rho_{l,*},u_{l,*})(0)$ and $\rho_{r,*}(L)$, that is
\begin{align}
&\rho(t,0)=\rho_{l,*}(0)+\bar{\rho}_{l}(t),\quad u(t,0)=u_{l,*}(0)+\bar{u}_{l}(t),
\quad\rho(t,L)=\rho_{r,*}(L)+\bar{\rho}_{r}(t)\label{a5}
\end{align}
with
\begin{align}
\bar{\rho}_{l}(t+T)=\bar{\rho}_{l}(t),~~\bar{u}_{l}(t+T)=\bar{u}_{l}(t),\quad \bar{\rho}_{r}(t+T)=\bar{\rho}_{r}(t)\label{A1}
\end{align}
for some constant $T>0$,
we want to know whether boundary conditions~\eqref{a5}-\eqref{A1} can trigger a time-periodic transonic shock solution. And furthermore, we want to know whether the existed time-periodic solution is time asymptotically stable under the small perturbation of the initial data.

To propose our main results, we assume in advance the cross section area of the pipeline $a=a(x)$ satisfies
\begin{align}
%&\frac{a'(x^{*})}{a(x^{*})}>0,\label{ABC1}\\
\theta\kappa\leq\frac{a'(x)}{a(x)}\leq\kappa,\quad x\in[0,L],\label{a2}
\end{align}
where $\kappa$ is a suitably small positive constant and $\theta$ is any given constant in $(0,1]$. We also assume the inlet steady velocity $u_{l,*}(0)$ satisfies
\begin{align}
1<u_{l,*}(0)<2+\sqrt{3}.\label{a7}
\end{align}

The main results of this paper are:
\begin{theorem}\label{t1}
(Existence of the time-periodic weak solution)~ Under the assumptions~\eqref{a2}-\eqref{a7}, we have\\
1) There exists a constant $\epsilon_{1}>0$, such that for any $0<\epsilon\leq\epsilon_{1}$, any given $T\in\mathbb{R}_{+}$, and any given functions $\bar{\rho}_{l}(t), \bar{u}_{l}(t)$ and $\bar{\rho}_{r}(t)$ satisfying~\eqref{A1} and
\begin{align}
\|\bar{\rho}_{l}(t)\|_{C^{1}}&\leq\epsilon,\quad \|\bar{u}_{l}(t)\|_{C^{1}}\leq\epsilon, \quad \|\bar{\rho}_{r}(t)\|_{C^{1}}\leq\epsilon, \label{a10}
\end{align}
there exists an initial value $(\rho^{(T)}, ~u^{(T)})(0,x)$ such that the initial-boundary value problem ~\eqref{a1}-\eqref{ABa2} possesses a piecewise smooth time-periodic weak solution $(\rho^{(T)},u^{(T)})(t,x)$ which are separated by a transonic time-periodic shock wave $x=\gamma^{(T)}(t)$.  \\
2) The initial value $(\rho^{(T)}, ~u^{(T)})(0,x)$ are two piecewise smooth functions
\begin{align}\label{A0}
\rho^{(T)}(0,x)=
\left\{\begin{aligned}
&\rho_{l}^{(T)}(0,x),\quad 0\leq x<x^{(T)},\\
&\rho_{r}^{(T)}(0,x),\quad x^{(T)}<x\leq L,
\end{aligned}\right.
~~u^{(T)}(0,x)=
\left\{\begin{aligned}
&u_{l}^{(T)}(0,x),\quad 0\leq x<x^{(T)},\\
&u_{r}^{(T)}(0,x),\quad x^{(T)}<x\leq L
\end{aligned}\right.
\end{align}
with
\begin{align}
|x^{(T)}-x^{*}|+\|(\rho_{l}^{(T)},u_{l}^{(T)})(0,x)-(\rho_{l,*},u_{l,*})(x)\|_{C^{1}}
+\|(\rho_{r}^{(T)},u_{r}^{(T)})(0,x)-(\rho_{r,*},u_{r,*})(x)\|_{C^{1}}\leq C_{E}\epsilon.\label{AA0}
\end{align}
3) The piecewise smooth weak solution $(\rho^{(T)},u^{(T)})(t,x)$ is time-periodic, i.e., $(\rho^{(T)},u^{(T)})(t+T,x)=(\rho^{(T)},u^{(T)})(t,x)$. And the shock wave curve $x=\gamma^{(T)}(t)$ satisfies $\gamma^{(T)}(0)=x^{(T)},~\gamma^{(T)}(t+T)=\gamma^{(T)}(t)$ and $0<\gamma^{(T)}(t)<L$.
Moreover, the piecewise smooth solution $(\rho^{(T)},u^{(T)})(t,x)$ satisfies
\begin{align}
\sum_{m=0}^{2}|\partial_{t}^{m}(\gamma^{(T)}(t)-x^{*})|&+\|(\rho_{l}^{(T)},u_{l}^{(T)})(t,x)
-(\rho_{l,*},u_{l,*})(x)\|_{C^{1}}\notag\\
&+
\|(\rho_{r}^{(T)},u_{r}^{(T)})(t,x)-(\rho_{r,*},u_{r,*})(x)\|_{C^{1}}
<C_{E}\epsilon\label{a13}
\end{align}
for some positive constant $C_E>0$ and fulfills the $Rankine-Hugoniot$ conditions
\begin{align}
&[\rho^{(T)}(u^{(T)})^{2}+\rho^{(T)}]
={\gamma^{(T)}}'(t)[\rho^{(T)}u^{(T)}],\label{AAA1}\\
&[\rho^{(T)}u^{(T)}]={\gamma^{(T)}}'(t)[\rho^{(T)}],\label{AAA2}
\end{align}
%and the Lax geometric entropy conditions
%\begin{align}
%(u-1)(t,\gamma^{(T)}(t-))>{\gamma^{(T)}}'(t)>(u-1)(t,\gamma^{(T)}(t+)),~~
%(u+1)(t,\gamma^{(T)}(t+))>{\gamma^{(T)}}'(t),\label{AAA3}
%\end{align}
where $[f]=f(t,\gamma^{(T)}(t)+)-f(t,\gamma^{(T)}(t)-)$ with $\gamma^{(T)}(t)+$ and $\gamma^{(T)}(t)-$ representing the right and left side of the shock wave curve respectively.
\end{theorem}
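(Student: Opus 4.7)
The plan is to construct the time-periodic transonic shock as a small perturbation of the background steady shock $(\rho_*,u_*)(x)$ with interface at $x=x^*$, via a fixed-point iteration on the space $\mathcal{X}_\epsilon$ of $T$-periodic $C^2$ shock curves $\gamma(t)$ satisfying $\sum_{m=0}^{2}\|\partial_t^m(\gamma-x^*)\|_{C^0}\leq C_E\epsilon$. Each iterate $\gamma^{(n)}\in\mathcal{X}_\epsilon$ is updated by solving two sub-problems for \eqref{a1}, one on either side of the current shock curve, coupled at the shock through the Rankine-Hugoniot conditions \eqref{AAA1}-\eqref{AAA2}; this is the decoupling advertised in the introduction.

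The supersonic sub-problem is posed on the strip $\{0\leq x\leq \gamma^{(n)}(t)+\delta\}$ with the prescribed entrance data in \eqref{a5}. Since both eigenvalues $u\pm 1$ are positive there (by \eqref{a7}), the classical characteristic method gives a unique $T$-periodic $C^1$ solution $(\rho_l^{(n+1)},u_l^{(n+1)})$ within $O(\epsilon)$ of $(\rho_{l,*},u_{l,*})$. On the subsonic side, the two R-H relations at $x=\gamma^{(n)}(t)$ couple the now-known left trace, the unknown right trace, and the shock speed. I would decouple them by using one R-H combination as an inflow Riemann-invariant-type boundary condition at $x=\gamma^{(n)}(t)$, solve the resulting two-boundary quasilinear problem for \eqref{a1} on $\{\gamma^{(n)}(t)<x\leq L\}$ with outlet data $\rho(t,L)=\rho_{r,*}(L)+\bar\rho_r(t)$, and then use the remaining R-H relation to obtain $(\gamma^{(n+1)})'$ as a first-order ODE driven by the two traces. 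The fixed point $\gamma^{(T)}$ yields the time-periodic weak solution; the initial data \eqref{A0} and the estimate \eqref{a13} come from evaluating at $t=0$ and from the uniform $\epsilon$-bounds accumulated along the iteration.

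The principal obstacle is the subsonic sub-problem. Characteristics reflect between the shock and the outlet, so closing an $O(\epsilon)$ $C^1$ estimate requires an energy estimate with exponential decay in the number of reflections, in the spirit of~\cite{Rauch, Xin}; the expansion condition \eqref{a2} ($a'/a\geq\theta\kappa>0$) supplies the dissipative mechanism that makes such an estimate available when $\kappa$ is small. A second delicate point is exact $T$-periodicity of $\gamma^{(n+1)}$: the R-H-derived ODE produces a periodic $\gamma$ only if $\int_0^T(\gamma^{(n+1)})'\,dt=0$ holds as an exact identity, not merely to leading order in $\epsilon$. I would enforce this solvability condition by adjusting a free parameter at each iterate (for instance, the mean shock position, or equivalently the initial value $x^{(T)}=\gamma^{(T)}(0)$ appearing in \eqref{AA0}); this additional compatibility step is precisely the two-step structure of the iteration. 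With this adjustment in place, the smallness of $\kappa$ in \eqref{a2} together with \eqref{a10} yields enough room to show that the map $\gamma^{(n)}\mapsto\gamma^{(n+1)}$ is a contraction on $\mathcal{X}_\epsilon$ in a $C^0$ or $C^1$ norm, producing $(\rho^{(T)},u^{(T)})$, $\gamma^{(T)}$, and $x^{(T)}$ with all the stated properties.
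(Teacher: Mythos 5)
Your overall architecture (supersonic part by forward characteristics, a two\-/step iteration that alternates between a subsonic boundary value problem and an ODE for the shock position extracted from the Rankine--Hugoniot conditions) matches the paper's. However, there is a genuine gap in the mechanism you invoke to close the subsonic estimates. You claim that the expansion condition \eqref{a2} ``supplies the dissipative mechanism'' for an energy estimate with exponential decay in the number of reflections. It does not, and cannot: in \eqref{a2} the relative slope is bounded \emph{above} by a small $\kappa$, so the geometric source terms contribute only $O(\kappa)$ per transit of the duct; meanwhile the outlet condition (Dirichlet on $\rho$) reflects the Riemann invariants with coefficient exactly $1$, and the shock reflects the incoming invariant into the outgoing one with a coefficient whose size is governed by $\big|\frac{\partial G}{\partial v}\frac{\partial F}{\partial U}\big|=\mathcal{M}=\frac{(u_{l,*}(0)-1)^{2}}{2u_{l,*}(0)}$ as in \eqref{a8}. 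The iteration contracts only because $\mathcal{M}<1$, which is precisely what the hypothesis \eqref{a7} ($1<u_{l,*}(0)<2+\sqrt{3}$) guarantees, and because the weighting $\hat{\Phi}_{2}=\Phi_{2}/\alpha$ in \eqref{b38} with $\alpha\in(\frac{\mathcal{M}}{2-\mathcal{M}},1)$ redistributes the reflection so that both boundary couplings \eqref{b40} and \eqref{b41} become strictly contractive. Your proposal never uses \eqref{a7} for this purpose (you cite it only for positivity of the supersonic eigenvalues, which is automatic for any supersonic state); without the reflection-coefficient analysis the claimed exponential decay per reflection is unobtainable, and smallness of $\kappa$ alone cannot compensate a reflection product that may exceed $1$.

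On your second ``delicate point'': the periodicity of each $\gamma^{(n)}$ does not require an ad hoc mean-zero compatibility adjustment. The shock ODE \eqref{b42} has $\frac{\partial F}{\partial x}(x^{*},0,0,0)=-\frac{a'(x^{*})}{a(x^{*})}\frac{u_{l,*}(x^{*})}{2}<0$ (this is where the lower bound $a'/a\geq\theta\kappa>0$ in \eqref{a2} is actually used), so the Poincar\'e map of the $T$-periodically forced ODE is a contraction on an interval and has a unique fixed point (Theorem~A.1, via Brouwer and the variational equation); selecting that fixed point is equivalent to your proposal of tuning $\gamma^{(n)}(0)$, so this part of your argument is essentially correct, just framed less directly. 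The substantive missing idea remains the first one: identifying and exploiting the sub-unit product of boundary reflection coefficients, which is the actual role of \eqref{a7} and of the weight $\alpha$.
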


\begin{theorem}\label{t2}
(Stability around the time-periodic weak solution)~
Under the assumptions~\eqref{a2}-\eqref{a7}, there exists a constant $\epsilon_{2}\in(0,\epsilon_{1})$ and a constant $C_{S}>0$, such that for any $0<\epsilon<\epsilon_{2}$, any given $T\in\mathbb{R}_{+}$, any given functions $\bar{\rho}_{l}(t), \bar{u}_{l}(t)$ and $\bar{\rho}_{r}(t)$ satisfying~\eqref{a5}-\eqref{A1} and~\eqref{a10}, and for any given initial data
\begin{align}
\rho(0,x)=
\left\{\begin{aligned}
&\rho_{l}(0,x),\quad 0\leq x<\tilde{x},\\
&\rho_{r}(0,x),\quad \tilde{x}<x\leq L,
\end{aligned}\right.
~~u(0,x)=
\left\{\begin{aligned}
&u_{l}(0,x),\quad 0\leq x<\tilde{x},\\
&u_{r}(0,x),\quad \tilde{x}<x\leq L
\end{aligned}\right.
\end{align}
satisfying
\begin{align}
|\tilde{x}-x^{*}|+\|(\rho_{l},u_{l})(0,x)-(\rho_{l,*},u_{l,*})(x)\|_{C^{1}}
+\|(\rho_{r},u_{r})(0,x)-(\rho_{r,*},u_{r,*})(x)\|_{C^{1}}\leq\epsilon,\label{AA5}
\end{align}
the initial-boundary value problem~\eqref{a1}-\eqref{ABa2} possesses a piecewise smooth solution $(\rho,u)(t,x)$ containing a transonic shock wave $x=\gamma(t)$ with $\gamma(0)=\tilde{x}$ and $0<\gamma(t)<L$ satisfying the pair of $Rankine-Hugoniot$ conditions which are similar to~\eqref{AAA1}-\eqref{AAA2}. Moreover, if we denote the transonic solution as
\begin{align*}
(\rho,u)(t,x)=
\left\{\begin{aligned}
&(\rho_{l},u_{l})(t,x),\quad 0\leq x<\gamma(t),\\
&(\rho_{r},u_{r})(t,x),\quad \gamma(t)<x\leq L,
\end{aligned}\right.
\end{align*}
then
\begin{align}
&\|(\rho_{l},u_{l})(t,\cdot)-(\rho_{l}^{(T)},u_{l}^{(T)})(t,\cdot)\|_{C^{0}([0,\min\{\gamma(t),\gamma^{(T)}(t)\}))}\notag\\
&+\|(\rho_{r},u_{r})(t,\cdot)-(\rho_{r}^{(T)},u_{r}^{(T)})(t,\cdot)\|_{C^{0}((\max\{\gamma(t),\gamma^{(T)}(t)\},L])}\notag\\
&+\sum_{m=0}^{1}\Big|\partial_{t}^{m}\Big(\gamma(t)-\gamma^{(T)}(t)\Big)\Big|\leq C_{S}\epsilon\xi^{\big[\frac{t}{T_{0}}\big]},\label{AA7}
\end{align}
where $(\rho_{l}^{(T)},u_{l}^{(T)},\gamma^{(T)},\rho_{r}^{(T)},u_{r}^{(T)})$ is the time-periodic solution triggered by the boundary value in~\theref{t1} and constants $\xi\in(0,1)$, $T_{0}=L\mathop{\max}\limits_{i=1,2}\mathop{\sup}\limits_{t,x}\big
|\frac{1}{\lambda_{i}}(t,x)\big|$ with $\lambda_{i}(i=1,2)$ being eigenvalues of the equations~\eqref{a1}.
\end{theorem}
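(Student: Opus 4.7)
The plan is to compare the perturbed solution $(\rho,u,\gamma)$ with the time-periodic solution $(\rho^{(T)},u^{(T)},\gamma^{(T)})$ produced by \theref{t1}. Since the two solutions share the same time-periodic boundary data from \eqref{a5}--\eqref{A1}, the discrepancy comes solely from the initial data, which is of size $\epsilon$ by \eqref{AA5} and \eqref{AA0}. My strategy has two parts: first establish global existence of the perturbed solution by rerunning the iteration scheme used in \theref{t1} with the new initial data, then derive a per-period contraction estimate that yields the exponential decay $\xi^{[t/T_0]}$ in \eqref{AA7}.

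For existence, I would repeat the two-step iteration from \theref{t1} on each time slab $[kT_0,(k+1)T_0]$, using the previous slab's endpoint as the next slab's initial data. Because the iteration is uniformly stable under initial perturbations of size $O(\epsilon)$, convergence is preserved and yields a piecewise smooth transonic shock solution on $[0,\infty)$; the constant $C_S$ differs from $C_E$ only by universal factors coming from boundary and characteristic propagation.

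For the decay analysis, introduce the differences $\delta\rho_l=\rho_l-\rho_l^{(T)}$, $\delta u_l=u_l-u_l^{(T)}$, $\delta\rho_r=\rho_r-\rho_r^{(T)}$, $\delta u_r=u_r-u_r^{(T)}$ and $\delta\gamma=\gamma-\gamma^{(T)}$. In the supersonic region both eigenvalues $\lambda_{1,2}=u\pm 1$ are positive, so the forward characteristics from the initial interval $[0,\tilde{x})$ all cross the shock within time $\le T_0$. Since the left boundary data coincide exactly, on the common strip $[0,\min\{\gamma,\gamma^{(T)}\})$ the two supersonic solutions are identical for $t\ge T_0$, so that the only contribution to state differences at the shock trace is the curve shift itself, giving $|\delta\rho_l|+|\delta u_l|\le C|\delta\gamma(t)|$ at $x=\gamma(t)-$ via a first-order Taylor expansion of $\rho_l^{(T)},u_l^{(T)}$ in $x$. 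In the subsonic region one has $\lambda_1<0<\lambda_2$: the $\lambda_2$ characteristic enters the subsonic domain from the shock (carrying shock-induced data) and the $\lambda_1$ characteristic enters from $x=L$ (carrying the right boundary density, whose difference vanishes). Tracing these two families, and using the linearized Rankine--Hugoniot relations from \eqref{AAA1}--\eqref{AAA2} at the shock to express $\delta\gamma'$ and one subsonic trace component in terms of the supersonic trace and the remaining subsonic component---the same decoupling employed in \theref{t1}---produces a closed linear hyperbolic system for $(\delta\rho_r,\delta u_r,\delta\gamma)$ driven only by data of size $O(|\delta\gamma|)$. A Gronwall estimate over one period then yields a contraction by some factor $\xi\in(0,1)$, and induction in the slab index $k$ gives \eqref{AA7}.

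The main obstacle is the free-boundary discrepancy: $\gamma$ and $\gamma^{(T)}$ lie on different curves, so $\delta\rho$ and $\delta u$ are not literally defined on the thin strip between them. I would circumvent this, exactly as in \theref{t1}, by an $\epsilon$-small change of variables that simultaneously straightens both curves to $x=x^*$; the resulting perturbations of the PDE coefficients are of size $|\delta\gamma|+|\delta\gamma'|$ and can be absorbed as higher-order terms once $\epsilon_2$ is chosen small enough. With the domains fixed, the contraction analysis reduces to a standard characteristic argument on a cylindrical domain, and the bound on $|\partial_t\delta\gamma|$ in \eqref{AA7} follows directly by reading off $\delta\gamma'$ from the linearized Rankine--Hugoniot relation applied to the already-bounded traces.
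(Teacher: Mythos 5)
Your overall architecture (supersonic part settles first, then a slab-by-slab induction in the subsonic region using characteristics plus the Rankine--Hugoniot relations at the shock) matches the paper's, but there is a genuine gap at the decisive step: you assert that ``a Gronwall estimate over one period then yields a contraction by some factor $\xi\in(0,1)$.'' A Gronwall estimate by itself only gives boundedness or growth of order $e^{Ct}$; it cannot produce $\xi<1$. The contraction in the paper comes from two specific quantitative sources that your proposal never identifies. First, the reflection coefficient of the closed feedback loop shock $\to$ interior $\to$ outlet $\to$ interior $\to$ shock must be strictly less than one: the paper computes $\bigl|\frac{\partial G}{\partial v}\frac{\partial F}{\partial U}\bigr|=\mathcal{M}=\frac{(u_{l,*}(x^*)-1)^2}{2u_{l,*}(x^*)}$ and uses the hypothesis~\eqref{a7} precisely to get $\mathcal{M}<1$, then introduces the weight $\alpha$ in~\eqref{b38} so that the combined factor $\mathcal{M}\frac{1+\alpha}{2\alpha}$ stays below the chosen $\xi$ (see~\eqref{d21}--\eqref{d22}). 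Your claim that the linearized system is ``driven only by data of size $O(|\delta\gamma|)$'' overlooks this loop: the $\lambda_1$-wave reflected at $x=L$ re-enters the shock condition through $U=\frac12(\alpha\hat\Phi_2-\hat\Phi_1)$, so the subsonic trace feeds back into $\gamma'$ and then into the new boundary value of $\hat\Phi_2$, and without $\mathcal{M}\frac{1+\alpha}{2\alpha}<1$ the induction $\Theta\le M_1\epsilon\xi^{l}$ does not close. Second, the decay of the shock position difference $|\gamma-\gamma^{(T)}|$ is not a consequence of the trace estimates alone; it uses the sign $\frac{\partial F}{\partial x}(x^*,0,0,0)=-\frac{a'(x^*)}{a(x^*)}\frac{u_{l,*}(x^*)}{2}<0$ (divergent nozzle), which turns the shock ODE into an exponentially stable one and forces the additional constraint $\xi>e^{-(\frac{a'(x^*)}{a(x^*)}\frac{u_{l,*}(x^*)}{2}-\circ(\epsilon))T_0}$ in~\eqref{d19}. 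Since your argument invokes neither~\eqref{a7} nor the geometry of $a(x)$, it does not explain why the difference decays rather than merely remains $O(\epsilon)$.

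Two smaller divergences from the paper. For existence of the perturbed solution the paper simply cites the known global-existence results of Xin--Yin and Rauch--Xie--Xin to obtain~\eqref{DD2}, rather than rerunning the periodic iteration slab by slab; your route is plausible but would need its own justification that the iteration converges for non-periodic initial data. For the domain mismatch between $\gamma$ and $\gamma^{(T)}$, the paper deliberately avoids straightening the free boundaries (it says so in the introduction): it works on $[\max\{\gamma(t),\gamma^{(T)}(t)\},L]$ and absorbs the strip via the triangle inequality $|\hat\Phi_2^{(T)}(t,\gamma^{(T)}(t))-\hat\Phi_2^{(T)}(t,\gamma(t))|\le\|\partial_x\hat\Phi_2^{(T)}\|\,|\gamma-\gamma^{(T)}|$ as in~\eqref{d17}, which is lighter than a change of variables and does not perturb the PDE coefficients. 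Your straightening idea could be made to work, but the coefficient perturbations it introduces are of size $|\delta\gamma|+|\delta\gamma'|$, which are exactly the quantities being estimated, so one would still have to verify they are dominated by the contraction.
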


For each weak solution studied in this paper, we divide it into two smooth parts $(\rho_l,u_l)$ and $(\rho_r,u_r)$, which satisfy the $Rankine-Hugoniot$ conditions at the shock wave curve. On the left side of the shock curve, since both the characteristics spread forward, only using the inlet data and mimicking the method used in~\cite{Ma,Yuan}, a unique time-periodic supersonic solution $(\rho_l,u_l)$ can be directly obtained after a starting time.~While on the right side of the shock wave, the subsonic condition makes the two characteristic curves propagating in different directions and the fluid states are determined by the data on the left side (shock), the right side (outlet) and the initial value in the subsonic domain.~Previous studies on time-periodic solutions triggered by the boundary conditions are primarily focused on fixed boundaries, see~\cite{Yuan,Qu} for the details. As for the problem considered in this paper, one of the main challenges comes from the unknown position of the shock wave due to the perturbation of the boundary.~This uncertainty deduces a free boundary problem in the whole subsonic domain.~Unlike the usual technique that changing the free boundary into a fixed one, we solve this problem by making full use of the $Rankine-Hugoniot$ conditions and introducing a two-step-iteration method.~Specifically speaking, by a ingeniously constructed iteration method, we utilize one of the equation in the $Rankine-Hugoniot$ conditions to determine the position of approximate shock curve and the other one are used to provide a boundary condition at the shock curve to calculate the subsonic flow states.~Under the additional assumptions~\eqref{a2}-\eqref{a7}, we prove the convergence of the constructed iterative sequence, whose limit provides the time-periodic solution to the original free boundary problem.~At the end of this paper, we prove the time-periodic transonic shock solution is dynamical stable if we impose small perturbations on the initial data.

There are some remarks on our main results.
\begin{remark}
From the proof of~\theref{t1}, the relative slope $\frac{a'(x)}{a(x)}$ need not be small in the supersonic domain (i.e.~on the left side of the shock wave). In this case, we should change the assumption \eqref{a7} into
$$
1<u_{l,*}(x^*)<2+\sqrt{3}.
$$
\end{remark}
\begin{remark}
 Compared with the results in \cite{Rauch}, for some technical reasons, we need the smallness of the relative slope $\frac{a'(x)}{a(x)}$ on the right side of the shock, which makes an essential role in the convergence of the constructed subsonic iterative sequence. However, we get a more robust result: the existence and stability of the time-periodic solution.
 \end{remark}
\begin{remark}
It is worthy to be pointed out that we can also obtain the result similar to Theorem~1.2 in~\cite{QUPENG}, that is, in addition to the $C^{0}$ exponential stability~\eqref{AA7}, we also get
\begin{align*}
\mathop{\lim}\limits_{t\rightarrow+\infty}\Big(&\|(\rho_{l},u_{l})(t,\cdot)-(\rho_{l}^{(T)},u_{l}^{(T)})(t,\cdot)\|_{C^{1}
([0,\min\{\gamma(t),\gamma^{(T)}(t)\}))}\notag\\
&+\|(\rho_{r},u_{r})(t,\cdot)-(\rho_{r}^{(T)},u_{r}^{(T)})(t,\cdot)\|_{C^{1}((\max\{\gamma(t),\gamma^{(T)}(t)\},L])}\notag\\
&+\sum_{m=0}^{2}\Big|\partial_{t}^{m}\Big(\gamma(t)-\gamma^{(T)}(t)\Big)\Big|\Big)=0.
\end{align*}
Since the proof process is similar to~\cite{QUPENG}, we will not go into the details in this paper.
\end{remark}

\begin{remark}
In 2023, Zhou~\cite{Zhou} studied a similar problem on isentropic gas in his thesis. Compared with isentropic case, we can represent the gas states on the right side of shocks by the left side states and the shock velocity explicitly, which permits us to capture some dissipation mechanism coming from shocks. Therefore, we impose a concise condition~\eqref{a7} on the background velocity instead of the implicit condition
$M_{H}(\rho_{l,0}(0),u_{l,0}(0))<1$
in~\cite{Zhou}.
\end{remark}

The remaining parts of this paper are as follows: In~\secref{s2}, we give some preliminaries on the steady background solution and derive the equivalent system. In~\secref{s3} and~\ref{s4}, we give the proofs of~\theref{t1} and~\ref{t2} respectively. Finally, for the completeness and self-containment of this paper, we give an appendix for the proofs of~Theorems~A.1 and~A.2.

\section{Preliminaries}\label{s2}
\indent\indent For the sake of symbol simplicity, unless otherwise specified, the $C^{0}$ norm, i.e. $\|\cdot\|_{C^{0}}$, is represented by $\|\cdot\|$ in this paper.

\subsection{The existence of steady transonic shock solution}\label{su1}
\indent\indent The steady transonic shock solution $(\rho_{*},u_{*})$, which will be used as a background solution, satisfies the following ordinary differential equations
\begin{align}\label{b2}
\left\{\begin{aligned}
&\frac{d(\rho_{*} u_{*})}{dx}=-\frac{a'(x)}{a(x)}\rho_{*} u_{*},\\
&\frac{d(\rho_{*} u_{*}^{2}+\rho_{*})}{dx}=-\frac{a'(x)}{a(x)}\rho_{*} u_{*}^{2},
\end{aligned}\right.
\end{align}
for $x\in(0,L)$. Through a simple calculation, \eqref{b2} can be rewritten as
\begin{align}\label{b3}
\left\{\begin{aligned}
&\frac{d\rho_{*}}{dx}=\frac{a'(x)}{a(x)}\frac{\rho_{*} u_{*}^{2}}{1-u_{*}^{2}},\\
&\frac{du_{*}}{dx}=\frac{a'(x)}{a(x)}\frac{u_{*}}{u_{*}^{2}-1}.
\end{aligned}\right.
\end{align}
Therefore, for a divergent nozzle, i.e. $a'(x)>0$, according to standard theory of ODEs, we know~\eqref{b2} will have a unique global smooth solution, once an initial condition $$(\rho_{*}(x_{0}),u_{*}(x_{0}))=(\rho_{-},u_{-}),~~\rho_{-}>0, u_{-}>0, u_{-}\neq1$$ is given. We have
\begin{align}
&u_{*}(x)>u_{-},~0<\rho_{*}(x)<\rho_{-},\quad \text{if}~ u_{-}>1, \label{UU1} \\
&0<u_{*}(x)<u_{-},~\rho_{*}(x)>\rho_{-},\quad \text{if}~0<u_{-}<1. \label{UU2}
\end{align}
We also refer to~\cite{Ma} for more detailed proof.

As for the discontinuous solution of~\eqref{a1}, from the $Rankine-Hugoniot$ conditions, it should satisfy
\begin{align}\label{b5}
\left\{\begin{aligned}
&\frac{\tilde{\rho}\tilde{u}-\rho u}{\tilde{\rho}-\rho}=v,\\
&\frac{\tilde{\rho}\tilde{u}^{2}+\tilde{\rho}-\rho u^{2}-\rho}{\tilde{\rho}\tilde{u}-\rho u}=v
\end{aligned}\right.
\end{align}
across the shock wave. In~\eqref{b5}, $(\rho,u)$ and $(\tilde{\rho},\tilde{u})$ represent the gas states on the left and right side of the shock wave respectively, $v$ represents the shock velocity. Then, we can rewrite the right side state by the left side state and shock velocity, i.e.
\begin{align}
&\tilde{\rho}=\rho(u-v)^{2}, ~~\tilde{u}=\frac{1}{u-v}+v. \label{b6}
\end{align}
Particularly, for the steady shock, i.e. the shock velocity is zero, we have
\begin{align}\label{b7}
\left\{\begin{aligned}
&\tilde{\rho}\tilde{u}=\rho u,\\
&\tilde{\rho}\tilde{u}^{2}+\tilde{\rho}=\rho u^{2}+\rho,
\end{aligned}\right.
\end{align}
that is,
\begin{align}
\tilde{\rho}=\rho u^{2},~~\tilde{u}=\frac{1}{u}.\label{b8}
\end{align}
According to the work of~\cite{Chen} and~\cite{Glaz}, for a fixed incoming flow~$\rho_{-}>0,u_{-}>1$ and a proper outlet density $\rho_{+}$, there exists a unique transonic shock solution $(\rho_{*}(x),u_{*}(x),x^{*})$ with the form
\begin{align}
\rho_{*}(x)=
\left\{\begin{aligned}
&\rho_{l,*}(x),\quad 0\leq x<x^{*},\\
&\rho_{r,*}(x),\quad x^{*}<x<L,
\end{aligned}\right.
\quad u_{*}(x)=
\left\{\begin{aligned}
&u_{l,*}(x),\quad 0\leq x<x^{*},\\
&u_{r,*}(x),\quad x^{*}<x<L.
\end{aligned}\right.
\end{align}
Here the supersonic state $(\rho_{l,*}(x), u_{l,*}(x))$ and the subsonic state $(\rho_{r,*}(x), u_{r,*}(x))$ are separated by a shock locating at $x=x^{*}$.
Moreover, from Lemma~3.1 in~\cite{Duan} and~Lemma~3.3 in~\cite{Zhou}, we know that the position $x^{*}$ of the shock monotonically depends on the exit density $\rho_{+}$.

\subsection{The derivation and properties of equivalent systems}\label{su2}

\indent\indent
One aim of this paper is to seek a time-periodic transonic shock solution of problem~\eqref{a1}-\eqref{ABa2}. In the following, we record the piecewise smooth solution separated by the shock wave $\gamma(t)$ as $(\rho_{l}, u_{l}; \rho_{r}, u_{r})(t,x)$, which satisfy~\eqref{a1} in  $0\leq x<\gamma(t)$ and $\gamma(t)<x\leq L$ respectively. Moreover, $Rankine-Hugoniot$ conditions
\begin{align}
(\rho_{l}u_{l}-\rho_{r}u_{r})(t,\gamma(t))
&=\gamma'(t)(\rho_{l}
-\rho_{r})(t,\gamma(t)),\label{RH1}\\
(\rho_{l}u_{l}^{2}+\rho_{l}-\rho_{r}u_{r}^{2}-\rho_{r})(t,\gamma(t))
&=\gamma'(t)(\rho_{l}u_{l}-\rho_{r}u_{r})(t,\gamma(t))\label{RH2}
\end{align}
are satisfied on $x=\gamma(t)$.

We assume the initial and boundary conditions are
\begin{align}
&(\rho_{l},u_{l})(t,x)|_{t=0}=(\rho_{l},u_{l})(0,x),\label{BB1}\\
&(\rho_{r},u_{r})(t,x)|_{t=0}=(\rho_{r},u_{r})(0,x),\label{BBB1}\\
&(\rho_{l},u_{l})(t,x)|_{x=0}=(\rho_{l},u_{l})(t,0)=(\rho_{l,*}(0)+\bar{\rho}_{l}(t),u_{l,*}(0)+\bar{u}_{l}(t)),\label{b17}\\
&\rho_{r}(t,x)|_{x=L}=\rho_{r}(t,L)=\rho_{r,*}(L)+\bar{\rho}_{r}(t).\label{b18}
\end{align}

In the subsonic domain, we rewrite~\eqref{a1} by the Riemann invariants
\footnote{As noted in the introduction that most of the analysis is trivial in the supersonic domain, we only perform the subsonic case for the sake of the simplicity.}
\begin{align}
\Upsilon_{1}=u_{r}-\ln\rho_{r},~~ \Upsilon_{2}=u_{r}+\ln\rho_{r}\label{b21}
\end{align}
to get
\begin{align}\label{b22}
\left\{\begin{aligned}
&\partial_{t}\Upsilon_{1}+\lambda_{1}\partial_{x}\Upsilon_{1}=\frac{1}{2}\frac{a'(x)}{a(x)}(\Upsilon_{1}+\Upsilon_{2}),\\
&\partial_{t}\Upsilon_{2}+\lambda_{2}\partial_{x}\Upsilon_{2}=-\frac{1}{2}\frac{a'(x)}{a(x)}(\Upsilon_{1}+\Upsilon_{2}),
\end{aligned}\right.
\end{align}
where the eigenvalues $\lambda_{1}=u_{r}-1=\frac{\Upsilon_{1}+\Upsilon_{2}-2}{2},~\lambda_{2}=u_{r}+1=\frac{\Upsilon_{1}+\Upsilon_{2}+2}{2}$.
At the same way, \eqref{a6} can also be rewritten as the following equations about the Riemann invariants $\Upsilon_{1,*}=u_{r,*}-\ln\rho_{r,*},~\Upsilon_{2,*}=u_{r,*}+\ln\rho_{r,*}$,
\begin{align}\label{b23}
\left\{\begin{aligned}
&\lambda_{1,*}\partial_{x}\Upsilon_{1,*}=\frac{1}{2}\frac{a'(x)}{a(x)}(\Upsilon_{1,*}+\Upsilon_{2,*}),\\
&\lambda_{2,*}\partial_{x}\Upsilon_{2,*}=-\frac{1}{2}\frac{a'(x)}{a(x)}(\Upsilon_{1,*}+\Upsilon_{2,*})
\end{aligned}\right.
\end{align}
with $\lambda_{1,*}=u_{r,*}-1=\frac{\Upsilon_{1,*}+\Upsilon_{2,*}-2}{2},~\lambda_{2,*}=u_{r,*}+1=
\frac{\Upsilon_{1,*}+\Upsilon_{2,*}+2}{2}$.

Define the perturbation
\begin{align}\label{PPP1}
\Phi_{1}=\Upsilon_{1}-\Upsilon_{1,*},~~\Phi_{2}=\Upsilon_{2}-\Upsilon_{2,*},~\mathbf{\Phi}=(\Phi_{1},\Phi_{2})^{\top},
\end{align}
by~\eqref{b22}-\eqref{b23}, we have
\begin{align}\label{b24}
\left\{\begin{aligned}
&\partial_{t}\Phi_{1}+\lambda_{1}(x,\mathbf{\Phi})\partial_{x}\Phi_{1}=-\frac{a'(x)}{a(x)}\frac{1}{\Upsilon_{1,*}+\Upsilon_{2,*}-2}(\Phi_{1}+
\Phi_{2}),\\
&\partial_{t}\Phi_{2}+\lambda_{2}(x,\mathbf{\Phi})\partial_{x}\Phi_{2}=-\frac{a'(x)}{a(x)}\frac{1}{\Upsilon_{1,*}+\Upsilon_{2,*}+2}(\Phi_{1}+
\Phi_{2}).
\end{aligned}\right.
\end{align}
The initial data~\eqref{BBB1} and boundary condition~\eqref{b18} become
\begin{align}
&t=0:~~ \Phi_{1}(0,x)=\Phi_{1,0}(x)=u_{r}(0,x)-u_{r,*}(x)-\ln\frac{\rho_{r}(0,x)}{\rho_{r,*}(x)},\notag\\
&\quad\quad\quad~~\Phi_{2}(0,x)=\Phi_{2,0}(x)=u_{r}(0,x)-u_{r,*}(x)+\ln\frac{\rho_{r}(0,x)}{\rho_{r,*}(x)},\label{BB2}\\
&x=L:~~ \Phi_{1}(t,L)=\varphi(t)+\Phi_{2}(t,L),\label{b25}
\end{align}
where $\varphi(t)=2\ln\frac{\rho_{r,*}(L)}{\rho_{r,*}(L)+\bar{\rho}_{r}(t)}$ is a time-periodic function with the same period as $\bar{\rho}_{r}(t)$.

Next, for the left boundary of the subsonic domain $x=\gamma(t)$, we will prove~\eqref{RH1}-\eqref{RH2} are equivalent to the following equations
\begin{align} \Phi_{2}(t,\gamma(t))=G(\gamma(t),\gamma'(t),\bar{\rho}_{l}(t,
\gamma(t)),\bar{u}_{l}(t,\gamma(t))),\label{b26}
\end{align}
\begin{align}
&\frac{d\gamma(t)}{dt}=F(\gamma(t),U(t,\gamma(t)),\bar{\rho}_{l}(t,
\gamma(t)),\bar{u}_{l}(t,\gamma(t))),\label{b27}\\
&U=\frac{1}{2}(\Phi_{2}-\Phi_{1}),\label{b28}
\end{align}
where
\begin{align}
&G(x,v,\bar{\rho}_{l},\bar{u}_{l})=\frac{1}{u_{l,*}(x)+\bar{u}_{l}-v}
+v-\frac{1}{u_{l,*}(x)}
+\ln\frac{(\rho_{l,*}(x)+\bar{\rho}_{l})(u_{l,*}(x)+\bar{u}_{l}-v)^{2}}
{\rho_{l,*}(x)u_{l,*}^{2}(x)},\label{b29}\\
&F(x,U,\bar{\rho}_{l},\bar{u}_{l})=u_{l,*}(x)+\bar{u}_{l}
-\sqrt{\frac{\rho_{r,*}(x)}{\rho_{l,*}(x)+\bar{\rho}_{l}}}
e^{\frac{1}{2}U}\label{b30}
\end{align}
with $G(x^{*},0,0,0)=0, F(x^{*},0,0,0)=0$, and
\begin{align}\label{DE1}
(\bar{\rho}_{l},\bar{u}_{l})(t,\gamma(t))
=(\rho_{l}(t,\gamma(t))-\rho_{l,*}(\gamma(t)), u_{l}(t,\gamma(t))-u_{l,*}(\gamma(t)))
\end{align}
is the perturbation on the left side of the shock, which can be calculated directly by the characteristic method in the supersonic domain.

Firstly, by~\eqref{b6}, \eqref{b21} and~\eqref{PPP1}, we get~\eqref{b26} with~\eqref{b29}. Then, by~\eqref{RH1}-\eqref{RH2}, we have
$$u_{l}(t,\gamma(t))-\gamma'(t)=\sqrt{\frac{\rho_{r}}
{\rho_{l}}}(t,\gamma(t)).$$
Noting~\eqref{b28} and~\eqref{DE1}, we get~\eqref{b27}.

Conversely, again by~\eqref{PPP1} and~\eqref{DE1}, \eqref{b26}-\eqref{b28} become
\begin{align}
&u_{r}(t,\gamma(t))
-u_{r,*}(\gamma(t))+\ln\frac{\rho_{r}(t,\gamma(t))}
{\rho_{r,*}(\gamma(t))}\notag\\
=&\frac{1}{u_{l}(t,\gamma(t))-\gamma'(t)}+\gamma'(t)
-\frac{1}{u_{l,*}(\gamma(t))}
+\ln\frac{\rho_{l}(t,\gamma(t))
u^{2}_{l}(t,\gamma(t))}
{\rho_{l,*}(\gamma(t))u_{l,*}^{2}(\gamma(t))},\label{GG1}
\end{align}
\begin{align}
&\big(u_{l}(t,\gamma(t))-\gamma'(t)\big)^{2}=\frac{\rho_{r}(t,\gamma(t))}
{\rho_{l}(t,\gamma(t))},\label{GG2}
\end{align}
which imply
\begin{align}\label{GG4}
\rho_{r}(t,\gamma(t))
=\rho_{l}(t,\gamma(t))\big(u_{l}(t,\gamma(t))
-\gamma'(t)\big)^{2},
\end{align}
and
\begin{align}\label{GG5}
u_{r}(t,\gamma(t))
=\frac{1}{u_{l}(t,\gamma(t))-\gamma'(t)}+\gamma'(t).
\end{align}
Therefore, $Rankine-Hugoniot$ conditions~\eqref{RH1}-\eqref{RH2} are equivalent to~\eqref{b26}-\eqref{b28}. Moreover, with the aid of~\eqref{b3} and~\eqref{b6}, it can be inferred from~\eqref{b29}-\eqref{b30}
\begin{align}
&%\frac{\partial G}{\partial x}(\rho_{l,*}(x^{*}),u_{l,*}(x^{*}))\mathop{=}\limits^{\triangle}
\frac{\partial G}{\partial x}(x^{*},0,0,0)=0,\label{b31}\\
&%\frac{\partial G}{\partial v}(x^{*},0,0,0)\mathop{=}\limits^{\triangle}
\frac{\partial G}{\partial v}(x^{*},0,0,0)=\frac{(u_{l,*}(x^{*})-1)^{2}}{u_{l,*}^{2}(x^{*})},
\label{b32}\\
&%\frac{\partial G}{\partial \bar{\rho}}(x^{*},0,0,0)\mathop{=}\limits^{\triangle}
\frac{\partial G}{\partial \bar{\rho}}(x^{*},0,0,0)=\frac{1}{\rho_{l,*}(x^{*})},\label{B31}\\
&%\frac{\partial G}{\partial \bar{u}}(x^{*},0,0,0)\mathop{=}\limits^{\triangle}
\frac{\partial G}{\partial \bar{u}}(x^{*},0,0,0)=\frac{2u_{l,*}(x^{*})-1}{u_{l,*}^{2}(x^{*})},\label{B32}\\
&%\frac{\partial F}{\partial x}(x^{*},0,0,0)\mathop{=}\limits^{\triangle}
\frac{\partial F}{\partial x}(x^{*},0,0,0)=\frac{a'(x^{*})}{a(x^{*})}
(-\frac{u_{l,*}(x^{*})}{2}),\label{b33}\\
&%\frac{\partial F}{\partial U}(x^{*},0,0,0)\mathop{=}\limits^{\triangle}
\frac{\partial F}{\partial U}(x^{*},0,0,0)=-\frac{u_{l,*}(x^{*})}{2},\label{b34}\\
&%\frac{\partial F}{\partial \bar{\rho}}(x^{*},0,0,0)\mathop{=}\limits^{\triangle}
\frac{\partial F}{\partial \bar{\rho}}(x^{*},0,0,0)=\frac{1}{2}\frac{u_{l,*}(x^{*})}{\rho_{l,*}(x^{*})},\label{B33}\\
&%\frac{\partial F}{\partial\bar{u}}(x^{*},0,0,0)\mathop{=}\limits^{\triangle}
\frac{\partial F}{\partial \bar{u}}(x^{*},0,0,0)=1.\label{B34}
\end{align}

Comparing to the previous results on time-periodic solutions to hyperbolic systems~\cite{Fang,Ma,Qu,Qup,QuP,Yuan,Yuh,Zhangx,Zhang}, in this work, one of the main difficulty is the moving position of the shock curve. After transforming the $Rankine-Hugoniot$ conditions~\eqref{RH1}-\eqref{RH2} into~\eqref{b26}-\eqref{b28}, we construct a two-step scheme in~\secref{s3}: we first calculate the gas state in the subsonic domain by representing the state on the right side of shock with the state on the left side and the shock velocity got in the last iteration by~\eqref{b26}, and then we determine the position of the shock through solving the equation~\eqref{b27} with this new right state.

As stated in~\cite{Zhang}, the dissipation effect on (at least) one boundary is very essential to ensure the convergence of the iteration sequence. Therefore, before formally introducing our iteration scheme, we employ a transformation to bring some dissipation effect (in the sense of~\cite{Yuw}) to the right boundary $x=L$ formally. Denoting
\begin{align}
\Big|\frac{\partial G}{\partial v}(x^{*},0,0,0)\frac{\partial F}{\partial U}(x^{*},0,0,0)\Big|=\frac{(u_{l,*}(x^{*})-1)^{2}}{2u_{l,*}(x^{*})}
\approx\frac{(u_{l,*}(0)-1)^{2}}{2u_{l,*}(0)}\mathop{=}\limits^{\triangle}\mathcal{M},\label{a8}
\end{align}
and noting~\eqref{a7}, we have $\mathcal{M}<1$. Take $\alpha\in\Big(\frac{\mathcal{M}}{2-\mathcal{M}},1\Big)$
and define
\begin{align}
\mathbf{\hat{\Phi}}=(\hat{\Phi}_{1},\hat{\Phi}_{2})^{\top}=(\Phi_{1},\frac{1}{\alpha}\Phi_{2})^{\top}.\label{b38}
\end{align}
Then~\eqref{b24}-\eqref{b28} can be rewritten as
\begin{align}\label{b39}
\left\{\begin{aligned}
&\partial_{t}\hat{\Phi}_{1}+\lambda_{1}(x,\mathbf{\hat{\Phi}})\partial_{x}\hat{\Phi}_{1}
=-\frac{a'(x)}{a(x)}\frac{1}{\Upsilon_{1,*}+\Upsilon_{2,*}-2}(\hat{\Phi}_{1}+\alpha\hat{\Phi}_{2}),\\
&\partial_{t}\hat{\Phi}_{2}+\lambda_{2}(x,\mathbf{\hat{\Phi}})\partial_{x}\hat{\Phi}_{2}
=-\frac{1}{\alpha}\frac{a'(x)}{a(x)}\frac{1}{\Upsilon_{1,*}+\Upsilon_{2,*}+2}(\hat{\Phi}_{1}+\alpha\hat{\Phi}_{2}),
\end{aligned}\right.
\end{align}
\begin{align}
&t=0:~~ \hat{\Phi}_{1}(0,x)=\hat{\Phi}_{1,0}(x)=\Phi_{1,0}(x),
~ \hat{\Phi}_{2}(0,x)=\hat{\Phi}_{2,0}(x)=\frac{1}{\alpha}\Phi_{2,0}(x),\label{BB3}\\
&x=L:~~ \hat{\Phi}_{1}(t,L)=\varphi(t)+\alpha\hat{\Phi}_{2}(t,L),\label{b40}\\
&x=\gamma(t):~~ \hat{\Phi}_{2}(t,\gamma(t))=\frac{1}{\alpha}G(\gamma(t),\gamma'(t),\bar{\rho}_{l}(t,\gamma(t)),\bar{u}_{l}(t,\gamma(t))),\label{b41}
\end{align}
\begin{align}
&\frac{d\gamma(t)}{dt}=F(\gamma(t),U(t,\gamma(t)),\bar{\rho}_{l}(t,\gamma(t)),\bar{u}_{l}(t,\gamma(t))),\label{b42}\\
&U=\frac{1}{2}(\alpha\hat{\Phi}_{2}-\hat{\Phi}_{1}).\label{b43}
\end{align}

\subsection{Periodic solutions to a kind of ODE system}\label{su11}
\indent\indent
In this subsection, for later use, we discuss the following type of ordinary differential equations
\begin{align}
\frac{d\psi(t)}{dt}=\Xi(\psi(t),\omega_{1}(t,\psi(t)),\omega_{2}(t,\psi(t)),\omega_{3}(t,\psi(t))),\label{bb1}
\end{align}
where $\psi(t)$ is a $C^{1}$ function, $\omega_{i}(t,\psi(t))~(i=1,2,3)$ are  $C^{1}$ functions with $\omega_{i}(0,0)=0$, and $\Xi(\psi,\omega_{1},\omega_{2},\omega_{3})$ is a smooth function satisfying
\begin{align}
&\Xi(0,0,0,0)=0,\label{bb2}\\
&\frac{\partial\Xi}{\partial\psi}(0,0,0,0)<0.\label{bb3}
%&\frac{\partial\Xi}{\partial\omega_{i}}(0,0,0,0)\neq0, ~~i=1,2,3.\label{bb4}
\end{align}
For simplicity, we shorthand $\Xi(\psi(t),\omega_{1}(t,\psi(t)),\omega_{2}(t,\psi(t)),\omega_{3}(t,\psi(t)))$ as $\Xi(\psi(t),\omega_{i}(t,\psi(t)))(i=1,2,3)$ and define
$$\Xi_{\omega_{i,0}}=1+\Big|\frac{\partial\Xi}{\partial\omega_{i}}(0,0)\Big|~(i=1,2,3),
\quad\Xi_{\psi,0}=\Big|\frac{\partial\Xi}{\partial\psi}(0,0)\Big|.$$ Moreover, we request that $\omega_{i}(t,\psi)(i=1,2,3)$ are periodic functions with a uniform period $T>0$ with respect to $t$, that is,
\begin{align}
\omega_{i}(t+T,\psi)=\omega_{i}(t,\psi).\label{TB1}
\end{align}
Then we have the following two theorems, for which the proofs can be found in the Appendix.\\
\textbf{Theorem~A.1}~\textit{(Existence and uniqueness of the periodic solution).~There exist $C_{\Xi}>0$ and $\varepsilon_{1}>0$, such that when $\|\omega_{i}\|_{C^{1}}<\varepsilon_{1}(i=1,2,3)$, system~\eqref{bb1} admits a unique periodic solution $\psi^{*}(t)$ with $\psi^{*}(t+T)=\psi^{*}(t)$ satisfying the following norm estimates
\begin{align}
\|\psi^{*}\|\leq(1+C_{\Xi}\varepsilon_{1})\frac{1}{\Xi_{\psi,0}}\sum_{i=1}^{3}\Xi_{\omega_{i,0}}\|\omega_{i}\|,\label{bb19}
\end{align}
\begin{align}
\|{\psi^{*}}'\|\leq(1+C_{\Xi}\varepsilon_{1})\Xi_{\psi,0}\|\psi^{*}\|+
(1+C_{\Xi}\varepsilon_{1})\sum_{i=1}^{3}\Xi_{\omega_{i,0}}\|\omega_{i}\|,\label{bb20}
\end{align}
\begin{align}
\|{\psi^{*}}''\|\leq&(2+C_{\Xi}\varepsilon_{1})\Xi_{\psi,0}\sum_{i=1}^{3}\Xi_{\omega_{i,0}}\|\omega_{i}\|
+(1+C_{\Xi}\varepsilon_{1})\sum_{i=1}^{3}\Xi_{\omega_{i,0}}
\|\frac{\partial\omega_{i}}{\partial t}\|.\label{bb21}
\end{align}}
\textbf{Theorem~A.2}~\textit{($C^{1}$ estimates of the perturbation).~Assume ${\omega_{1}}_{i}(t,\psi)(i=1,2,3)$ and ${\omega_{2}}_{i}(t,\psi)(i=1,2,3)$ are two sets of disturbed functions, $\psi_{1}^{*}(t)$ and $\psi_{2}^{*}(t)$ are periodic solutions of the following systems
\begin{align}
&\frac{d\psi_{1}}{dt}(t)=\Xi(\psi_{1}(t), {\omega_{1}}_{1}(t,\psi_{1}(t)),{\omega_{1}}_{2}(t,\psi_{1}(t)),{\omega_{1}}_{3}(t,\psi_{1}(t))),\label{bb28}\\
&\frac{d\psi_{2}}{dt}(t)=\Xi(\psi_{2}(t), {\omega_{2}}_{1}(t,\psi_{2}(t)),{\omega_{2}}_{2}(t,\psi_{2}(t)),
{\omega_{2}}_{3}(t,\psi_{2}(t))),\label{bb29}
\end{align}
with
\begin{align*}
\psi_{1}^{*}(t+T)=\psi_{1}^{*}(t),~~\psi_{2}^{*}(t+T)=\psi_{2}^{*}(t),\quad \forall t\in\mathbb{R},
\end{align*}
then there exist $C_{\Xi}>0, \varepsilon_{2}>0$, such that if
$$\|{\omega_{1}}_{i}\|_{C^{1}}\leq\varepsilon_{2},~\|{\omega_{2}}_{i}\|_{C^{1}}\leq\varepsilon_{2}$$
hold, one has
\begin{align}
\|\psi_{1}^{*}-\psi_{2}^{*}\|\leq(1+C_{\Xi}T\varepsilon_{2})\exp(\Xi_{\psi,0}T)\frac{1}{\Xi_{\psi,0}}
\sum_{i=1}^{3}\Xi_{\omega_{i,0}}\|{\omega_{1}}_{i}-{\omega_{2}}_{i}\|,
\label{bb33}
\end{align}
\begin{align}
\|{{\psi}_{1}^{*}}'-{{\psi}_{2}^{*}}'\|
\leq&(1+C_{\Xi}T\varepsilon_{2})\exp(\Xi_{\psi,0}T)\sum_{i=1}^{3}\Xi_{\omega_{i,0}}\|{\omega_{1}}_{i}-{\omega_{2}}_{i}\|
+(1+C_{\Xi}\varepsilon_{2})\sum_{i=1}^{3}\Xi_{\omega_{i,0}}\|{\omega_{1}}_{i}-{\omega_{2}}_{i}\|.
\label{bb34}
\end{align}
}

\section{Existence of the time-periodic weak solution}\label{s3}
\indent\indent In this section, we will illustrate the existence of the time-periodic transonic shock solution to system~\eqref{a1}-\eqref{ABa2}. As noted in~\cite{Ma,Rauch,Yuan}, in the supersonic domain $0\leq x<\gamma(t)$, since the two characteristics are positive, the method of characteristics indicates that there exists a constant $T_{0}>0$, such that the IBVP~\eqref{a1}, \eqref{BB1}, \eqref{b17} exists a time-periodic solution $(\rho_{l}^{(T)},u_{l}^{(T)})(t,x)$ on the domain $\{(t,x)| t>T_{0}, 0\leq x<x^{*}+\delta\}$ with some constant $\delta>0$ satisfying
\begin{align}
&\rho_{l}^{(T)}(t+T,x)=\rho_{l}^{(T)}(t,x), ~~ u_{l}^{(T)}(t+T,x)=u_{l}^{(T)}(t,x),\label{b20}\\
&\|\rho_{l}^{(T)}(t,x)-\rho_{l,*}(x)\|_{C^{1}}<C_{l}\epsilon,~~ \|u_{l}^{(T)}(t,x)-u_{l,*}(x)\|_{C^{1}}<C_{l}\epsilon.\label{B20}
\end{align}
Without loss of generality, we can take $T_{0}=0$ by choosing the appropriate initial data.

For the subsonic domain $\gamma(t)<x\leq L$, since the two characteristic curves spread forward and backward respectively, in addition to calculating the flow state, we also need to determine the position of the shock wave. As stated in~\secref{s2}, we change the $Rankine-Hugoniot$ conditions into two equivalent equations. One of these equations is used to determine the state of the flow, while the other provides the position of the shock wave. Moreover, we need to exchange the roles of $t$ and $x$ and periodically extend the boundary conditions.
Now, we construct the iteration sequence for problem~\eqref{b39}, \eqref{b40}-\eqref{b43} from
\begin{align}
\mathbf{\hat{\Phi}}^{(0)}(t,x)=\mathbf{0}=(0,0)^{\top},~~ \gamma^{(0)}(t)=x^{*}.\label{b44}
\end{align}
Assume that $\mathbf{\hat{\Phi}}^{(n-1)}(t,x)$ and $\gamma^{(n-1)}(t)$ have been given, we get $\mathbf{\hat{\Phi}}^{(n)}(t,x)$ as the solution of the following linearized boundary value problem
\begin{align}\label{b45}
\left\{\begin{aligned}
&\partial_{t}\hat{\Phi}^{(n)}_{1}+\lambda_{1}(x,\mathbf{\hat{\Phi}}^{(n-1)})\partial_{x}\hat{\Phi}^{(n)}_{1}
=-\frac{a'(x)}{a(x)}\frac{1}{\Upsilon_{1,*}+\Upsilon_{2,*}-2}\hat{\Phi}^{(n-1)}_{1}-\alpha\frac{a'(x)}{a(x)}
\frac{1}{\Upsilon_{1,*}+\Upsilon_{2,*}-2}\hat{\Phi}^{(n-1)}_{2},\\
&\partial_{t}\hat{\Phi}^{(n)}_{2}+\lambda_{2}(x,\mathbf{\hat{\Phi}}^{(n-1)})\partial_{x}\hat{\Phi}^{(n)}_{2}
=-\frac{1}{\alpha}\frac{a'(x)}{a(x)}\frac{1}{\Upsilon_{1,*}+\Upsilon_{2,*}+2}\hat{\Phi}^{(n-1)}_{1}
-\frac{a'(x)}{a(x)}\frac{1}{\Upsilon_{1,*}+\Upsilon_{2,*}+2}\hat{\Phi}^{(n-1)}_{2},\\
&x=L:~~ \hat{\Phi}^{(n)}_{1}(t,L)=\varphi(t)+\alpha\hat{\Phi}^{(n-1)}_{2}(t,L),\\
&x=\gamma^{(n-1)}(t):~ \hat{\Phi}^{(n)}_{2}(t,\gamma^{(n-1)}(t))=\frac{1}{\alpha}G(\gamma^{(n-1)}(t),{\gamma^{(n-1)}}'(t),
\bar{\rho}_{l}^{(T)}(t,\gamma^{(n-1)}(t)),\bar{u}_{l}^{(T)}(t,\gamma^{(n-1)}(t))),
\end{aligned}\right.
\end{align}
where $\bar{\rho}_{l}^{(T)}(t,x)=\rho_{l}^{(T)}(t,x)-\rho_{l,*}(x), \bar{u}_{l}^{(T)}(t,x)=u_{l}^{(T)}(t,x)-u_{l,*}(x)$ represent the small perturbations between the supersonic time-periodic solution and the time-steady solution on the left side of the shock wave, which can be determined in the supersonic domain with estimates
\begin{align}
\|\bar{\rho}_{l}^{(T)}\|_{C^{1}}<C_{l}\epsilon,\quad \|\bar{u}_{l}^{(T)}\|_{C^{1}}<C_{l}\epsilon \label{BT20}
\end{align}
following~\eqref{B20}. We note the boundary value problems for $\hat{\Phi}^{(n)}_{1}$ and $\hat{\Phi}^{(n)}_{2}$ are now decoupled due to the specifically chosen linearization.
After we get $(\hat{\Phi}^{(n)}_{1},\hat{\Phi}^{(n)}_{2})$ from~\eqref{b45} with the help of $\gamma^{(n-1)}(t)$, let $\gamma^{(n)}(t)$ satisfying the following ordinary differential equation by~\eqref{b42}-\eqref{b43}
\begin{align}\label{b46}
\left\{\begin{aligned}
&\frac{d\gamma^{(n)}(t)}{dt}=F(\gamma^{(n)}(t),U^{(n)}(t,\gamma^{(n)}(t)),\bar{\rho}_{l}^{(T)}(t,\gamma^{(n)}(t)),\bar{u}_{l}^{(T)}(t,\gamma^{(n)}(t))),\\
&U^{(n)}=\frac{1}{2}(\alpha\hat{\Phi}^{(n)}_{2}-\hat{\Phi}^{(n)}_{1}).
\end{aligned}\right.
\end{align}

To prove the convergence of the iteration sequence, we need to extend the subsonic region $\{(x,t)|\gamma^{(n-1)}(t)<x\leq L,t\in\mathbb{R}\}$ along the characteristic curves to the line $x=x^{*}-\sqrt{\epsilon}$ on the left. This extension is justified because we have decoupled the system.

After providing the above iterative scheme~\eqref{b45} and~\eqref{b46}, we shall prove three lemmas related to the convergence of the iteration sequence.

\subsection{\texorpdfstring{$C^{1}$}~~boundedness and time-periodicity}\label{suu1}
\begin{lemma}\label{L1}
There exists a constant $\epsilon_{1}>0$ and a constant $C>0$, such that for any $\epsilon\in(0, \epsilon_{1})$, if
\begin{align}
&\|\varphi(t)\|_{C^{1}}<\epsilon,\label{c1}\\
&\varphi(t+T)=\varphi(t),\label{C1}
\end{align}
then for any $n\in\mathbb{N}_{+}$, the sequences $\{\mathbf{\hat{\Phi}}^{(n)}\}$ and $\{\gamma^{(n)}\}$ satisfy
\begin{align}
&\mathbf{\hat{\Phi}}^{(n)}(t+T,x)=\mathbf{\hat{\Phi}}^{(n)}(t,x),
~~\gamma^{(n)}(t+T)=\gamma^{(n)}(t),\label{TT1}\\
&\quad\quad\quad\quad\quad\quad\|\mathbf{\hat{\Phi}}^{(n)}(t,x)\|_{C^{1}}<C\epsilon,\label{TT2}\\
&\quad\quad\quad\quad\quad\quad\|\gamma^{(n)}-x^{*}\|_{C^{2}}<C\epsilon.\label{TT3}
\end{align}

\end{lemma}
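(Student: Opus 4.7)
The plan is induction on $n$. The base case $n=0$ is trivial since $\mathbf{\hat{\Phi}}^{(0)}\equiv\mathbf{0}$ and $\gamma^{(0)}\equiv x^{*}$. Assume \eqref{TT1}--\eqref{TT3} hold at step $n-1$. To construct $\mathbf{\hat{\Phi}}^{(n)}$ and verify the first half of \eqref{TT1}, I would exploit the fact that the specific linearization in \eqref{b45} decouples the two Riemann invariants into two independent linear transport equations whose right-hand sides depend on $\mathbf{\hat{\Phi}}^{(n-1)}$ but not on $\mathbf{\hat{\Phi}}^{(n)}$. Since $\lambda_{1}=u_{r}-1<0$ and $\lambda_{2}=u_{r}+1>0$ in the subsonic region, I would solve $\hat{\Phi}_{1}^{(n)}$ by tracing characteristics backward in time to the right boundary $x=L$ and $\hat{\Phi}_{2}^{(n)}$ by tracing back to the left boundary $x=\gamma^{(n-1)}(t)$. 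Both boundary data, namely $\varphi(t)+\alpha\hat{\Phi}_{2}^{(n-1)}(t,L)$ and $\frac{1}{\alpha}G(\gamma^{(n-1)}(t),{\gamma^{(n-1)}}'(t),\bar{\rho}_{l}^{(T)},\bar{u}_{l}^{(T)})$, are $T$-periodic in $t$ by \eqref{C1}, by the inductive periodicity of $\hat{\Phi}_{2}^{(n-1)}$ and $\gamma^{(n-1)}$, and by the periodicity of $\bar{\rho}_{l}^{(T)},\bar{u}_{l}^{(T)}$ following \eqref{b20}; the source terms in \eqref{b45} inherit $T$-periodicity from $\mathbf{\hat{\Phi}}^{(n-1)}$. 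Consequently $\mathbf{\hat{\Phi}}^{(n)}(t+T,x)$ and $\mathbf{\hat{\Phi}}^{(n)}(t,x)$ satisfy the same characteristic ODE with the same boundary data, and uniqueness yields the first half of \eqref{TT1}.

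For the $C^{1}$ bound \eqref{TT2}, I would Taylor-expand $G$ at $(x^{*},0,0,0)$: using $G(x^{*},0,0,0)=0$ together with the inductive $\|\gamma^{(n-1)}-x^{*}\|_{C^{2}}<C\epsilon$ and the supersonic estimate \eqref{BT20}, the left-boundary datum for $\hat{\Phi}_{2}^{(n)}$ is $O(\epsilon)$ in $C^{1}$; the right-boundary datum for $\hat{\Phi}_{1}^{(n)}$ is of $C^{1}$-size at most $(1+\alpha C)\epsilon$ by $\|\varphi\|_{C^{1}}<\epsilon$ and the induction hypothesis. The source terms in \eqref{b45} are bounded, up to a universal factor, by $\kappa\|\mathbf{\hat{\Phi}}^{(n-1)}\|_{C^{1}}$ owing to assumption \eqref{a2}, and the characteristic time in the extended domain is bounded by $(L-x^{*}+\sqrt{\epsilon})/\min_{i}|\lambda_{i}|$. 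A standard characteristic-ODE integration, together with a parallel computation for first derivatives after differentiating the equation once in $t$ and $x$, yields an inequality of the form
\[
\|\mathbf{\hat{\Phi}}^{(n)}\|_{C^{1}}\leq\tilde{C}\epsilon+C_{0}\kappa\|\mathbf{\hat{\Phi}}^{(n-1)}\|_{C^{1}}.
\]
Fixing $C$ large enough, then $\kappa$ and $\epsilon_{1}$ small enough that $\tilde{C}+C_{0}\kappa C\leq C$, closes \eqref{TT2} at step $n$.

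For the $C^{2}$ bound \eqref{TT3}, I would apply Theorem~A.1 of Subsection~\ref{su11} to equation \eqref{b46}. Setting $\psi(t)=\gamma^{(n)}(t)-x^{*}$ and $\omega_{1}(t,\psi)=U^{(n)}(t,\psi+x^{*})$, $\omega_{2}(t,\psi)=\bar{\rho}_{l}^{(T)}(t,\psi+x^{*})$, $\omega_{3}(t,\psi)=\bar{u}_{l}^{(T)}(t,\psi+x^{*})$, the ODE \eqref{b46} fits the abstract framework of that subsection: $F(x^{*},0,0,0)=0$ and the dissipativity condition $\frac{\partial F}{\partial x}(x^{*},0,0,0)=-\frac{a'(x^{*})}{a(x^{*})}\cdot\frac{u_{l,*}(x^{*})}{2}<0$ are immediate from \eqref{b33}. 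By the $C^{1}$ bound just obtained together with \eqref{BT20}, each $\omega_{i}$ is $T$-periodic and of $C^{1}$-size $O(\epsilon)$. Theorem~A.1 then produces a unique $T$-periodic $\gamma^{(n)}$ satisfying $\|\gamma^{(n)}-x^{*}\|_{C^{2}}<C\epsilon$ via the estimates \eqref{bb19}--\eqref{bb21}, giving both the second half of \eqref{TT1} and all of \eqref{TT3}.

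The main obstacle is closing \eqref{TT2} uniformly in $n$. The boundary coupling through $\alpha\hat{\Phi}_{2}^{(n-1)}(t,L)$, the source terms scaled by $a'/a$, and the dependence of the left-boundary datum on $\gamma^{(n-1)}$ and ${\gamma^{(n-1)}}'$ each contribute amplification factors that, iterated many times, could destroy uniform bounds. This is precisely where the smallness of $\kappa$ from \eqref{a2}, the dissipative rescaling by $\alpha<1$ introduced in \eqref{b38}, and the freedom to enlarge $C$ must be used together to deliver the contraction-type recursion that closes the induction.
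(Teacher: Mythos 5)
Your overall architecture coincides with the paper's: induction on $n$, backward characteristics to $x=L$ for $\hat{\Phi}_{1}^{(n)}$ and to $x=\gamma^{(n-1)}(t)$ for $\hat{\Phi}_{2}^{(n)}$, periodicity inherited from the periodic boundary data together with uniqueness along characteristics, and Theorem~A.1 applied to \eqref{b46} to get the second half of \eqref{TT1} and the bound \eqref{TT3}. The gap is in how you close \eqref{TT2}. The recursion you display,
$\|\mathbf{\hat{\Phi}}^{(n)}\|_{C^{1}}\leq\tilde{C}\epsilon+C_{0}\kappa\|\mathbf{\hat{\Phi}}^{(n-1)}\|_{C^{1}}$,
misidentifies where the coupling to step $n-1$ lives, and the proposed closure (``choose $\kappa$ and $\epsilon_{1}$ small so that $\tilde{C}+C_{0}\kappa C\leq C$'') would fail. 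The dominant dependence on $\mathbf{\hat{\Phi}}^{(n-1)}$ enters not through the $\kappa$-small interior source terms but through the boundary data: at $x=L$ the datum carries $\alpha\hat{\Phi}_{2}^{(n-1)}(t,L)$, and at the shock the datum $\frac{1}{\alpha}G(\gamma^{(n-1)},{\gamma^{(n-1)}}',\ldots)$ contains $\frac{1}{\alpha}\frac{\partial G}{\partial v}\,{\gamma^{(n-1)}}'$ with $\frac{\partial G}{\partial v}(x^{*},0,0,0)=\frac{(u_{l,*}(x^{*})-1)^{2}}{u_{l,*}^{2}(x^{*})}$, while ${\gamma^{(n-1)}}'$ is itself of size $|\frac{\partial F}{\partial U}|\,\|U^{(n-1)}\|=\frac{u_{l,*}}{2}\cdot\frac{1+\alpha}{2}\|\mathbf{\hat{\Phi}}^{(n-1)}\|$ plus $O(\epsilon)$ data terms (this is \eqref{c17}). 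Neither of these factors is $O(\kappa)$; both are $O(1)$. Hence your $\tilde{C}$ is not independent of the inductive constant: the boundary datum you call ``$O(\epsilon)$'' contributes $\mathcal{M}\frac{1+\alpha}{2\alpha}\,C\epsilon$ to the bound for $\hat{\Phi}_{2}^{(n)}$ (and $\alpha C\epsilon$ for $\hat{\Phi}_{1}^{(n)}$), so the inequality you need is of the form $\|\mathbf{\hat{\Phi}}^{(n)}\|\leq\beta_{0}\|\mathbf{\hat{\Phi}}^{(n-1)}\|+C'\epsilon$ with $\beta_{0}=\max\{\alpha,\ \mathcal{M}\frac{1+\alpha}{2\alpha}\}+O(\kappa)+O(\epsilon)$, and it closes only because $\beta_{0}<1$.

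Establishing $\beta_{0}<1$ is exactly where hypothesis \eqref{a7} (which gives $\mathcal{M}<1$ via \eqref{a8}) and the choice $\alpha\in\big(\frac{\mathcal{M}}{2-\mathcal{M}},1\big)$ enter; this is the content of the chain \eqref{c16}--\eqref{c18} and \eqref{c34} in the paper and is the quantitative heart of the lemma. You correctly flag this as ``the main obstacle,'' but you neither carry out the computation nor write a recursion consistent with it, so as stated the induction does not close. (A further small point in the same direction: the constants cannot all be taken equal — the paper must order them, e.g.\ $C_{1}>\frac{1}{1-\alpha}$ and $C_{1}>\frac{4+2\sqrt{3}+\rho_{l,*}(0)}{(1-\mathcal{M}\frac{1+\alpha}{2\alpha})\alpha\rho_{l,*}(0)}C_{l}$, then $C_{2}>\mu_{\max}C_{1}$ and $C_{F,0},C_{F,1}$ in terms of $C_{1}$ — precisely because the amplification factor $\mathcal{M}\frac{1+\alpha}{2\alpha}$ sits in the denominator of the admissible $C_{1}$.) The remaining parts of your outline (periodicity, the derivative estimates via the auxiliary operator, and the application of Theorem~A.1 to $\gamma^{(n)}$) match the paper's route.
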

\begin{proof}
We prove this lemma inductively, namely,
assume estimates of $\mathbf{\hat{\Phi}}^{(n-1)}$ and $\gamma^{(n-1)}$ hold, we prove the following estimates for $\mathbf{\hat{\Phi}}^{(n)}$ and $\gamma^{(n)}$
\begin{align}
&\|\mathbf{\hat{\Phi}}^{(n)}\|<C_{1}\epsilon,\label{c3}\\
&\|\partial_{t}\mathbf{\hat{\Phi}}^{(n)}\|<C_{1}\epsilon,\label{c4}\\
&\|\partial_{x}\mathbf{\hat{\Phi}}^{(n)}\|<C_{2}\epsilon,\label{c6}\\
&\|\gamma^{(n)}-x^{*}\|<C_{F,0}\epsilon,\label{c7}\\
&\|{\gamma^{(n)}}'\|<C_{F,1}\epsilon,\label{c8}\\
&\|{\gamma^{(n)}}''\|<C_{F,1}\epsilon,\label{c9}
\end{align}
where $C_{i}(i=1,2),C_{F,0},C_{F,1}$ are positive constants, which will be determined later.

Since the backward characteristic curve of $\hat{\Phi}^{(n)}_{1}$ intersects with the fixed boundary $x=L$, we can use a method similar to~\cite{Qup} for estimates of $\hat{\Phi}^{(n)}_{1}$. We will skip the proof details for $\hat{\Phi}^{(n)}_{1}$ in this and the following lemmas.

For $\hat{\Phi}^{(n)}_{2}$, the corresponding backward characteristic curve would generally intersect with the shock, and thus the estimates are more complicated. First, by~\eqref{b44}, we know that all the estimates hold for $n=0$. Next, for $n\geq1$, according to~Theorem~A.1, it can be inferred that $\gamma^{(n)}(t)$ is periodic with the period $T>0$. Then, for any $t\in\mathbb{R}, x^{*}-\sqrt{\epsilon}\leq x\leq L$, we define the characteristic curve of $\hat{\Phi}_{2}^{(n)}$ as
\begin{align}\label{c14}
\left\{
\begin{aligned}
&\frac{d\eta_{2}^{(n)}}{ds}(s;t,x)=\lambda_{2}(\eta_{2}^{(n)}(s;t,x),\mathbf{\hat{\Phi}}^{(n-1)}(s+t,\eta_{2}^{(n)}(s;t,x))),\\
&\eta_{2}^{(n)}(0;t,x)=x,
\end{aligned}\right.
\end{align}
and let $s_{2}^{(n)}=s_{2}^{(n)}(t,x)$ satisfying $\eta_{2}^{(n)}(s_{2}^{(n)};t,x)=\gamma^{(n-1)}(t+s_{2}^{(n)})$, then we integrate~$\eqref{b45}_{2}$ along the characteristic curve~\eqref{c14} to get
\begin{align}
\hat{\Phi}_{2}^{(n)}(t,x)=&\hat{\Phi}_{2}^{(n)}(t+s_{2}^{(n)},\gamma^{(n-1)}(t+s_{2}^{(n)}))
-\int_{0}^{s_{2}^{(n)}}\Big(-\frac{1}{\alpha}\frac{a'}{a}\frac{1}{\Upsilon_{1,*}
+\Upsilon_{2,*}+2}\hat{\Phi}^{(n-1)}_{1}\notag\\
&-\frac{a'}{a}\frac{1}{\Upsilon_{1,*}+\Upsilon_{2,*}+2}
\hat{\Phi}^{(n-1)}_{2}\Big)(s+t,\eta_{2}^{(n)}(s;t,x))ds.\label{c15}
\end{align}
By~\eqref{a2}, \eqref{b3}, \eqref{b31}-\eqref{B32} and~$\eqref{b45}_{4}$, we have
\begin{align}
|\hat{\Phi}_{2}^{(n)}(t,x)|\leq&|\frac{1}{\alpha}G(\gamma^{(n-1)}(t+s_{2}^{(n)}),{\gamma^{(n-1)}}'(t+s_{2}^{(n)}),
\bar{\rho}_{l}^{(T)}(t+s_{2}^{(n)},\gamma^{(n-1)}(t+s_{2}^{(n)})),\notag\\
&\bar{u}_{l}^{(T)}(t+s_{2}^{(n)},\gamma^{(n-1)}(t+s_{2}^{(n)})))|+C\kappa\epsilon\notag\\
\leq&\frac{1}{\alpha}(1+C\epsilon)(1+C\kappa)\Big(\frac{(u_{l,*}(0)-1)^{2}}{u_{l,*}^{2}(0)}\|{\gamma^{(n-1)}}'\|+\frac{1}{\rho_{l,*}(0)}|\bar{\rho}_{l}^{(T)}|
+\frac{2u_{l,*}(0)-1}{u_{l,*}^{2}(0)}|\bar{u}_{l}^{(T)}|\Big)\notag\\
&+C\kappa\epsilon+C\epsilon^{2}.\label{c16}
\end{align}
According to~\eqref{bb20} and the equation of $\gamma^{(n-1)}$
\begin{align}\label{RR1}
\frac{d\gamma^{(n-1)}(t)}{dt}=F(\gamma^{(n-1)}(t),U^{(n-1)}(t,\gamma^{(n-1)}(t)),\bar{\rho}_{l}^{(T)}(t,\gamma^{(n-1)}(t)),
\bar{u}_{l}^{(T)}(t,\gamma^{(n-1)}(t))),
\end{align}
it follows from~\eqref{a2}, \eqref{b3} and~\eqref{b33}-\eqref{B34}
\begin{align}
\|{\gamma^{(n-1)}}'\|\leq&(1+C\epsilon)|\frac{\partial F}{\partial x}(x^{*},0,0,0)|\|\gamma^{(n-1)}-x^{*}\|+(1+C\epsilon)\Big(|\frac{\partial F}{\partial U}(x^{*},0,0,0)|\|U^{(n-1)}\|\notag\\
&+|\frac{\partial F}{\partial \bar{\rho}_{l}}(x^{*},0,0,0)|\|\bar{\rho}_{l}^{(T)}\|
+|\frac{\partial F}{\partial \bar{u}_{l}}(x^{*},0,0,0)|\|\bar{u}_{l}^{(T)}\|\Big)\notag\\
\leq&(1+C\epsilon)(1+C\kappa)\Big(\frac{u_{l,*}(0)}{2}\|U^{(n-1)}\|
+\frac{u_{l,*}(0)}{2\rho_{l,*}(0)}\|\bar{\rho}_{l}^{(T)}\|+\|\bar{u}_{l}^{(T)}\|\Big)
+C\kappa\epsilon.\label{c17}
\end{align}
Thus, by~\eqref{a7}, \eqref{a8}, \eqref{BT20} and~$\eqref{b46}_{2}$, we get
\begin{align}
|\hat{\Phi}_{2}^{(n)}(t,x)|
\leq& (1+C\epsilon)(1+C\kappa)\Big(\frac{(u_{l,*}(0)-1)^{2}}{2u_{l,*}(0)}\frac{1+\alpha}{2\alpha}C_{1}\epsilon
+\frac{1}{\alpha}\frac{(u_{l,*}(0)-1)^{2}}{2\rho_{l,*}(0)u_{l,*}(0)}C_{l}\epsilon
+\frac{(u_{l,*}(0)-1)^{2}}{\alpha u_{l,*}^{2}(0)}C_{l}\epsilon\notag\\
&+\frac{1}{\alpha}\frac{1}{\rho_{l,*}(0)}C_{l}\epsilon
+\frac{1}{\alpha}\frac{2u_{l,*}(0)-1}{u_{l,*}^{2}(0)}C_{l}\epsilon\Big)+C\kappa\epsilon+C\epsilon^{2}\notag\\
\leq&(1+C\epsilon)(1+C\kappa)\Big(\mathcal{M}\frac{1+\alpha}{2\alpha}C_{1}\epsilon
+\frac{1}{\alpha}\frac{u_{l,*}^{2}(0)+1}{2\rho_{l,*}(0)u_{l,*}(0)}C_{l}\epsilon
+\frac{1}{\alpha}C_{l}\epsilon\Big)+C\kappa\epsilon+C\epsilon^{2}\notag\\
<& C_{1}\epsilon,\label{c18}
\end{align}
where $C_{1}>\max\{\frac{1}{1-\alpha},\frac{4+2\sqrt{3}+\rho_{l,*}(0)}{(1-\mathcal{M}\frac{1+\alpha}{2\alpha})\alpha\rho_{l,*}(0)}C_{l}\}.$

According to the time-periodicity of $\hat{\Phi}^{(n-1)}$ and $\gamma^{(n-1)}$, and~\eqref{c14}, one has
\begin{align}
\eta_{2}^{(n)}(s;T+t,x)=\eta_{2}^{(n)}(s;t,x),~~s_{2}^{(n)}(T+t,x)=s_{2}^{(n)}(t,x).\label{c19}
\end{align}
By~\eqref{b20} and~\eqref{c15}, we have
\begin{align}
\hat{\Phi}_{2}^{(n)}(T+t,x)=&\hat{\Phi}_{2}^{(n)}(T+t+s_{2}^{(n)}(T+t,x),\gamma^{(n-1)}(T+t+s_{2}^{(n)}(T+t,x)))\notag\\
&-\int_{0}^{s_{2}^{(n)}}\Big(-\frac{1}{\alpha}\frac{a'}{a}\frac{1}{\Upsilon_{1,*}
+\Upsilon_{2,*}+2}\hat{\Phi}^{(n-1)}_{1}\notag\\
&-\frac{a'}{a}\frac{1}{\Upsilon_{1,*}+\Upsilon_{2,*}+2}
\hat{\Phi}^{(n-1)}_{2}\Big)(s+T+t,\eta_{2}^{(n)}(s;T+t,x))ds\notag\\
=&\frac{1}{\alpha}G(\gamma^{(n-1)}(t+s_{2}^{(n)}(t,x)),{\gamma^{(n-1)}}'(t+s_{2}^{(n)}(t,x)),
\bar{\rho}_{l}^{(T)}(t+s_{2}^{(n)},\gamma^{(n-1)}(t+s_{2}^{(n)})),\notag\\
&\bar{u}_{l}^{(T)}(t+s_{2}^{(n)},\gamma^{(n-1)}(t+s_{2}^{(n)})))
-\int_{0}^{s_{2}^{(n)}}\Big(-\frac{1}{\alpha}\frac{a'}{a}\frac{1}{\Upsilon_{1,*}
+\Upsilon_{2,*}+2}\hat{\Phi}^{(n-1)}_{1}\notag\\
&-\frac{a'}{a}\frac{1}{\Upsilon_{1,*}+\Upsilon_{2,*}+2}
\hat{\Phi}^{(n-1)}_{2}\Big)(s+t,\eta_{2}^{(n)}(s;t,x))ds\notag\\
=&\hat{\Phi}_{2}^{(n)}(t,x),\label{c20}
\end{align}
which indicates $\hat{\Phi}_{2}^{(n)}$ is time-periodic with the period $T$.

In order to prove~\eqref{c4} and~\eqref{c6}, we introduce a new function sequence satisfying
\begin{align}\label{c28}
\left\{
\begin{aligned}
&\frac{\partial\sigma^{(n)}}{\partial t}+\lambda_{2}(x,\mathbf{\hat{\Phi}}^{(n-1)}(t,x))\frac{\partial\sigma^{(n)}}{\partial x}=0,\\
&\sigma^{(n)}(t,\gamma^{(n-1)}(t))=F(\gamma^{(n-1)}(t),U^{(n-1)}(t,\gamma^{(n-1)}(t)),\bar{\rho}_{l}^{(T)}(t,\gamma^{(n-1)}(t)),
\bar{u}_{l}^{(T)}(t,\gamma^{(n-1)}(t))).
\end{aligned}
\right.
\end{align}
From the above equations, we know that $\sigma^{(n)}$ remains unchanged along the characteristic curve~\eqref{c14} and its value on the boundary $\gamma^{(n-1)}(t)$ is ${\gamma^{(n-1)}}'(t)$. By simultaneously applying operator $\frac{\partial}{\partial t}+\sigma^{(n)}(t,x)\frac{\partial}{\partial x}$ on both sides of~$\eqref{b45}_{2}$, we obtain
\begin{align}
&\frac{\partial}{\partial t}(\frac{\partial\hat{\Phi}^{(n)}_{2}}{\partial t}+\sigma^{(n)}\frac{\partial\hat{\Phi}^{(n)}_{2}}{\partial x})+\lambda_{2}(x,\mathbf{\hat{\Phi}}^{(n-1)})\frac{\partial}{\partial x}(\frac{\partial\hat{\Phi}^{(n)}_{2}}{\partial t}+\sigma^{(n)}\frac{\partial\hat{\Phi}^{(n)}_{2}}{\partial x})\notag\\
=&-\Big(\frac{\partial\lambda_{2}}{\partial\hat{\Phi}_{1}}\frac{\partial\hat{\Phi}^{(n-1)}_{1}}{\partial t}+\frac{\partial\lambda_{2}}{\partial\hat{\Phi}_{2}}\frac{\partial\hat{\Phi}^{(n-1)}_{2}}{\partial t}+\sigma^{(n)}\frac{\partial\lambda_{2}}{\partial\hat{\Phi}_{1}}\frac{\partial\hat{\Phi}^{(n-1)}_{1}}{\partial x}+\sigma^{(n)}\frac{\partial\lambda_{2}}{\partial\hat{\Phi}_{2}}\frac{\partial\hat{\Phi}^{(n-1)}_{2}}{\partial x}+\sigma^{(n)}\frac{\partial\lambda_{2}}{\partial x}\Big)\frac{\partial\hat{\Phi}^{(n)}_{2}}{\partial x}\notag\\
&-\frac{1}{\alpha}\frac{a'(x)}{a(x)}\frac{1}{\Upsilon_{1,*}+\Upsilon_{2,*}+2}\frac{\partial\hat{\Phi}_{1}^{(n-1)}}{\partial t}-\sigma^{(n)}\frac{1}{\alpha}(\frac{a'(x)}{a(x)})'\frac{1}{\Upsilon_{1,*}+\Upsilon_{2,*}+2}\hat{\Phi}_{1}^{(n-1)}\notag\\
&-\sigma^{(n)}\frac{1}{\alpha}\frac{a'(x)}{a(x)}(-\frac{\Upsilon'_{1,*}+\Upsilon'_{2,*}}
{(\Upsilon_{1,*}+\Upsilon_{2,*}+2)^{2}})\hat{\Phi}_{1}^{(n-1)}-\sigma^{(n)}\frac{1}{\alpha}\frac{a'(x)}{a(x)}
\frac{1}{\Upsilon_{1,*}+\Upsilon_{2,*}+2}\frac{\partial\hat{\Phi}_{1}^{(n-1)}}{\partial x}\notag\\
&-\frac{a'(x)}{a(x)}\frac{1}{\Upsilon_{1,*}+\Upsilon_{2,*}+2}\frac{\partial\hat{\Phi}_{2}^{(n-1)}}{\partial t}-\sigma^{(n)}(\frac{a'(x)}{a(x)})'\frac{1}{\Upsilon_{1,*}+\Upsilon_{2,*}+2}\hat{\Phi}_{2}^{(n-1)}\notag\\
&-\sigma^{(n)}\frac{a'(x)}{a(x)}(-\frac{\Upsilon'_{1,*}+\Upsilon'_{2,*}}
{(\Upsilon_{1,*}+\Upsilon_{2,*}+2)^{2}})\hat{\Phi}_{2}^{(n-1)}-\sigma^{(n)}\frac{a'(x)}{a(x)}
\frac{1}{\Upsilon_{1,*}+\Upsilon_{2,*}+2}\frac{\partial\hat{\Phi}_{2}^{(n-1)}}{\partial x},\label{c29}
\end{align}
then integrating it along the characteristic curve~\eqref{c14} and using~\eqref{a2}, \eqref{b3}, \eqref{b31}-\eqref{B32}, $\eqref{b45}_{4}$ and~\eqref{BT20}, we have
\begin{align}
&|\frac{\partial\hat{\Phi}^{(n)}_{2}}{\partial t}+\sigma^{(n)}\frac{\partial\hat{\Phi}^{(n)}_{2}}{\partial x}|\notag\\
\leq&|\frac{\partial\hat{\Phi}^{(n)}_{2}}{\partial t}(t+s_{2}^{(n)},\gamma^{(n-1)}(t+s_{2}^{(n)}))+\sigma^{(n)}(t+s_{2}^{(n)},\gamma^{(n-1)}(t+s_{2}^{(n)}))
\frac{\partial\hat{\Phi}^{(n)}_{2}}{\partial x}(t+s_{2}^{(n)},\gamma^{(n-1)}(t+s_{2}^{(n)}))|\notag\\
&+C\epsilon\|\frac{\partial\hat{\Phi}_{2}^{(n)}}{\partial x}\|+C\kappa\epsilon+C\epsilon^{2}\notag\\
\leq&\frac{1}{\alpha}|\frac{\partial G}{\partial x}|\|{\gamma^{(n-1)}}'\|
+|\frac{\partial G}{\partial v}|\|{\gamma^{(n-1)}}''\|
+|\frac{\partial G}{\partial \bar{\rho}_{l}}|
(|\frac{\partial\bar{\rho}_{l}^{(T)}}{\partial t}|+|\frac{\partial\bar{\rho}_{l}^{(T)}}{\partial x}|\|{\gamma^{(n-1)}}'\|)\notag\\
&+|\frac{\partial G}{\partial \bar{u}_{l}}|
(|\frac{\partial\bar{u}_{l}^{(T)}}{\partial t}|+|\frac{\partial\bar{u}_{l}^{(T)}}{\partial x}|\|{\gamma^{(n-1)}}'\|)
+C\epsilon\|\frac{\partial\hat{\Phi}_{2}^{(n)}}{\partial x}\|+C\kappa\epsilon+C\epsilon^{2}\notag\\
\leq&\frac{1}{\alpha}C\epsilon\|{\gamma^{(n-1)}}'\|+\frac{1}{\alpha}\big((1+C\kappa)\frac{(u_{l,*}(0)-1)^{2}}
{u_{l,*}^{2}(0)}+C\epsilon\big)\|{\gamma^{(n-1)}}''\|\notag\\
&+\frac{1}{\alpha}\big((1+C\kappa)\frac{1}{\rho_{l,*}(0)}+C\epsilon\big)
(1+C_{F,1}\epsilon)C_{l}\epsilon
+\frac{1}{\alpha}\big((1+C\kappa)\frac{2u_{l,*}(0)-1}{u_{l,*}^{2}(0)}+C\epsilon\big)(1+C_{F,1}\epsilon)C_{l}\epsilon\notag\\
&+C\epsilon\|\frac{\partial\hat{\Phi}_{2}^{(n)}}{\partial x}\|+C\kappa\epsilon+C\epsilon^{2},\label{c32}
\end{align}
where we used
\begin{align}
\|\sigma^{(n)}\|=\|{\gamma^{(n-1)}}'\|<C_{F,1}\epsilon.\label{c31}
\end{align}
According to~\eqref{bb21} and $\gamma^{(n-1)}$ satisfying the equation~\eqref{RR1}, it follows from~\eqref{b3} and~\eqref{b33}-\eqref{B34}
\begin{align}
\|{\gamma^{(n-1)}}''\|
\leq&2(1+C\epsilon)|\frac{\partial F}{\partial x}(x^{*},0,0,0)|\Big(|\frac{\partial F}{\partial U}(x^{*},0,0,0)|
\|U^{(n-1)}\|\notag\\
&+|\frac{\partial F}{\partial \bar{\rho}_{l}}(x^{*},0,0,0)|\|\bar{\rho}_{l}^{(T)}\|
+|\frac{\partial F}{\partial \bar{u}_{l}}(x^{*},0,0,0)|\|\bar{u}_{l}^{(T)}\|\Big)\notag\\
&+(1+C\epsilon)\Big(|\frac{\partial F}{\partial U}(x^{*},0,0,0)|\|\frac{\partial U^{(n-1)}}{\partial t}\|\notag\\
&+|\frac{\partial F}{\partial \bar{\rho}_{l}}(x^{*},0,0,0)|\|\frac{\partial \bar{\rho}_{l}^{(T)}}{\partial t}\|
+|\frac{\partial F}{\partial \bar{u}_{l}}(x^{*},0,0,0)|\|\frac{\partial \bar{u}_{l}^{(T)}}{\partial t}\|\Big)\notag\\
\leq&(1+C\epsilon)(1+C\kappa)\frac{a'(x^{*})}{a(x^{*})}\big(\frac{1}{2}u_{l,*}^{2}(0)\|U^{(n-1)}\|
+\frac{1}{2}\frac{u_{l,*}^{2}(0)}{\rho_{l,*}(0)}\|\bar{\rho}_{l}^{(T)}\|+u_{l,*}(0)\|\bar{u}_{l}^{(T)}\|\big)\notag\\
&+(1+C\epsilon)(1+C\kappa)\big(\frac{1}{2}u_{l,*}(0)\|\frac{\partial U^{(n-1)}}{\partial t}\|
+\frac{1}{2}\frac{u_{l,*}(0)}{\rho_{l,*}(0)}\|\frac{\partial\bar{\rho}_{l}^{(T)}}{\partial t}\|+\|\frac{\partial\bar{u}_{l}^{(T)}}{\partial t}\|\big).\label{c33}
\end{align}
Then, by~\eqref{a2}-\eqref{a7}, \eqref{a8}, \eqref{BT20}, ${\eqref{b46}}_{2}$, \eqref{c17} and~\eqref{c33}, we have
\begin{align}
&|\frac{\partial\hat{\Phi}_{2}^{(n)}}{\partial t}(t,x)|-C\epsilon|\frac{\partial\hat{\Phi}_{2}^{(n)}}{\partial x}(t,x)|\notag\\
\leq&(1+C\epsilon)(1+C\kappa)\Big(\frac{1+\alpha}{2\alpha}\frac{(u_{l,*}(0)-1)^{2}}{2u_{l,*}(0)}C_{1}\epsilon
+\frac{1}{2\alpha}\frac{(u_{l,*}(0)-1)^{2}}{\rho_{l,*}(0)u_{l,*}(0)}C_{l}\epsilon
+\frac{1}{\alpha}\frac{(u_{l,*}(0)-1)^{2}}{u_{l,*}^{2}(0)}C_{l}\epsilon\Big)\notag\\
&+\frac{1}{\alpha}(1+C\kappa)(1+C\epsilon)\frac{1}{\rho_{l,*}(0)}C_{l}\epsilon
+\frac{1}{\alpha}(1+C\kappa)(1+C\epsilon)\frac{2u_{l,*}(0)-1}{u_{l,*}^{2}(0)}C_{l}\epsilon\notag\\
&+C\epsilon\|\frac{\partial\hat{\Phi}_{2}^{(n)}}{\partial x}\|+C\kappa\epsilon+C\epsilon^{2}\notag\\
\leq&(1+C\epsilon)(1+C\kappa)\Big(\mathcal{M}\frac{1+\alpha}{2\alpha}C_{1}\epsilon
+\frac{u_{l,*}^{2}(0)+1}{2\alpha\rho_{l,*}(0)u_{l,*}(0)}C_{l}\epsilon+\frac{1}{\alpha}C_{l}\epsilon\Big)\notag\\
&+C\epsilon\|\frac{\partial\hat{\Phi}_{2}^{(n)}}{\partial x}\|+C\kappa\epsilon+C\epsilon^{2}.\label{c34}
\end{align}
With the aid of~$\eqref{b45}_{2}$, we get
\begin{align}
\frac{\partial\hat{\Phi}_{2}^{(n)}}{\partial x}=&-\frac{1}{\lambda_{2}(x,\mathbf{\hat{\Phi}}^{(n-1)})}\frac{\partial\hat{\Phi}_{2}^{(n)}}{\partial t}-\frac{1}{\alpha}\frac{a'(x)}{a(x)}\frac{1}{\Upsilon_{1,*}+\Upsilon_{2,*}+2}\frac{1}{\lambda_{2}(x,\mathbf{\hat{\Phi}}^{(n-1)})}
\hat{\Phi}_{1}^{(n-1)}\notag\\
&-\frac{a'(x)}{a(x)}\frac{1}{\Upsilon_{1,*}+\Upsilon_{2,*}+2}\frac{1}{\lambda_{2}(x,\mathbf{\hat{\Phi}}^{(n-1)})}
\hat{\Phi}_{2}^{(n-1)}.\label{c35}
\end{align}
By~\eqref{a2}, one has
\begin{align}
|\frac{\partial\hat{\Phi}_{2}^{(n)}}{\partial x}|\leq
\mu_{\max}\|\frac{\partial\hat{\Phi}_{2}^{(n)}}{\partial t}\|+C\kappa\epsilon,\label{c36}
\end{align}
where $\mu_{\max}=\mathop{\max}\limits_{\mathop{i=1,2}\limits_{(t,x)\in \mathbb{E}}}\Big|\frac{1}
{\lambda_{i}(x,\mathbf{\hat{\Phi}}^{(n-1)}(t,x)}\Big|,~\mathbb{E}=\{(t,x)|t\in\mathbb{R}, x\in[x^{*}-\sqrt{\epsilon},L]\}$.

Combining~\eqref{c34} with~\eqref{c36}, we get
\begin{align}
&\|\frac{\partial\hat{\Phi}_{2}^{(n)}}{\partial t}\|< C_{1}\epsilon,\label{c37}\\
&\|\frac{\partial\hat{\Phi}_{2}^{(n)}}{\partial x}\|< C_{2}\epsilon,\label{c38}
\end{align}
where $C_{2}>\mu_{\max}C_{1}$.

According to~\eqref{bb19}-\eqref{bb21} and $\gamma^{(n)}$ satisfying the equation~$\eqref{b46}_{1}$, we follow from~\eqref{a2}, \eqref{b3}, \eqref{b33}-\eqref{B34} and~\eqref{BT20}
\begin{align}
\|\gamma^{(n)}-x^{*}\|\leq&(1+C\epsilon)\frac{1}{
|\frac{\partial F}{\partial x}(x^{*},0,0,0)|}\Big(|\frac{\partial F}{\partial U}(x^{*},0,0,0)|\|U^{(n)}\|\notag\\
&+|\frac{\partial F}{\partial \bar{\rho}_{l}}(x^{*},0,0,0)|\|\bar{\rho}_{l}^{(T)}\|
+|\frac{\partial F}{\partial \bar{u}_{l}}(x^{*},0,0,0)|\|\bar{u}_{l}^{(T)}\|\Big)\notag\\
\leq&(1+C\epsilon)(1+C\kappa)\frac{1}{\theta\kappa}(\frac{1+\alpha}{2}C_{1}\epsilon+\frac{1}{\rho_{l,*}(0)}C_{l}\epsilon
+\frac{2}{u_{l,*}(0)}C_{l}\epsilon)\notag\\
<&C_{F,0}\epsilon,\label{c39}
\end{align}
\begin{align}
\|{\gamma^{(n)}}'\|\leq&(1+C\epsilon)(1+C\kappa)(\frac{1+\alpha}{2}u_{l,*}(0)C_{1}\epsilon+\frac{u_{l,*}(0)}{\rho_{l,*}(0)}C_{l}\epsilon
+2C_{l}\epsilon)\notag\\
<&C_{F,1}\epsilon,\label{c40}
\end{align}
\begin{align}
\|{\gamma^{(n)}}''\|\leq&(1+C\epsilon)(1+C\kappa)\kappa(\frac{1+\alpha}{4}u_{l,*}^{2}(0)C_{1}\epsilon
+\frac{u_{l,*}^{2}(0)}{2\rho_{l,*}(0)}C_{l}\epsilon+u_{l,*}(0)C_{l}\epsilon)\notag\\
&+(1+C\epsilon)(1+C\kappa)(\frac{1+\alpha}{4}u_{l,*}(0)C_{1}\epsilon+\frac{u_{l,*}(0)}{2\rho_{l,*}(0)}C_{l}\epsilon+C_{l}\epsilon)\notag\\
<&C_{F,1}\epsilon,\label{c41}
\end{align}
where $C_{F,0}>\frac{1}{\theta\kappa}\big(\frac{1+\alpha}{2}C_{1}+(\frac{1}{\rho_{l,*}(0)}+2)C_{l}\big), ~C_{F,1}>\frac{2+\sqrt{3}}{2}(1+\alpha)C_{1}+(\frac{2+\sqrt{3}}{\rho_{l,*}(0)}+2)C_{l}.$

We finish the proof of~\lemref{L1}.

\end{proof}

\subsection{\texorpdfstring{$C^{0}$}~~convergence}\label{suu2}
\begin{lemma}\label{L2}
For any fixed $\beta$ with $\max\{\frac{1+\alpha}{2\alpha}\mathcal{M},\alpha\}<\beta<1$, there exists a constant $\epsilon_{1}>0$, such that for any $\epsilon\in(0,\epsilon_{1}]$, if the condition~\eqref{c1} holds, then the sequence $\{\mathbf{\hat{\Phi}}^{(n)}\}$ satisfies
\begin{align}
\|\mathbf{\hat{\Phi}}^{(n)}-\mathbf{\hat{\Phi}}^{(n-1)}\|\leq C_{1}\epsilon\beta^{n-1}, \quad n\geq1.\label{c42}
\end{align}

\end{lemma}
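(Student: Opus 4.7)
The plan is to argue by induction on $n$. Assume $\|\mathbf{\hat{\Phi}}^{(k)}-\mathbf{\hat{\Phi}}^{(k-1)}\|\leq C_{1}\epsilon\beta^{k-1}$ for all $k\leq n-1$ (the base case $n=1$ follows from Lemma~\ref{L1} since $\mathbf{\hat{\Phi}}^{(0)}\equiv 0$). For the inductive step, I would split the estimate into three coupled pieces: (i) the shock difference $\gamma^{(n-1)}-\gamma^{(n-2)}$ in $C^{1}$, (ii) the difference $\hat{\Phi}_{2}^{(n)}-\hat{\Phi}_{2}^{(n-1)}$ coming from the shock boundary, and (iii) the difference $\hat{\Phi}_{1}^{(n)}-\hat{\Phi}_{1}^{(n-1)}$ coming from the outlet boundary.

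For (i), I apply Theorem~A.2 to the ODE~\eqref{RR1} governing $\gamma^{(n-1)}$ and $\gamma^{(n-2)}$, treating $U^{(n-1)}$ and $U^{(n-2)}$ as perturbation inputs. Using $U^{(k)}=\tfrac{1}{2}(\alpha\hat{\Phi}_{2}^{(k)}-\hat{\Phi}_{1}^{(k)})$ and the values of $\partial F/\partial U$, $\partial F/\partial\bar{\rho}$, $\partial F/\partial\bar{u}$ at $(x^{*},0,0,0)$ recorded in~\eqref{b33}--\eqref{B34}, I obtain
\[
\|\gamma^{(n-1)}-\gamma^{(n-2)}\|_{C^{1}}\leq \tfrac{u_{l,*}(0)}{2}\cdot\tfrac{1+\alpha}{2}\,\|\mathbf{\hat{\Phi}}^{(n-1)}-\mathbf{\hat{\Phi}}^{(n-2)}\|+O(\kappa\epsilon)\|\cdots\|,
\]
where the $O(\kappa\epsilon)$ terms collect the lower-order contributions from Taylor residues and from $\bar{\rho}_{l}^{(T)},\bar{u}_{l}^{(T)}$ (which are common to both ODEs, so they drop out of the difference). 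For (ii), I subtract the representation~\eqref{c15} written for $n$ and $n-1$. The integral terms contribute $O(\kappa\epsilon)\|\mathbf{\hat{\Phi}}^{(n-1)}-\mathbf{\hat{\Phi}}^{(n-2)}\|$ together with small errors produced by the differing characteristic curves $\eta_{2}^{(n)}$ and $\eta_{2}^{(n-1)}$, which depend on $\mathbf{\hat{\Phi}}^{(n-1)}$ and $\mathbf{\hat{\Phi}}^{(n-2)}$ through~\eqref{c14}. The dominant term is the boundary contribution
\[
\tfrac{1}{\alpha}\bigl[G(\gamma^{(n-1)},{\gamma^{(n-1)}}',\bar{\rho}_{l}^{(T)},\bar{u}_{l}^{(T)})-G(\gamma^{(n-2)},{\gamma^{(n-2)}}',\bar{\rho}_{l}^{(T)},\bar{u}_{l}^{(T)})\bigr],
\]
which, expanded by Taylor at $(x^{*},0,0,0)$ and using~\eqref{b31}--\eqref{B32}, is controlled to leading order by $\tfrac{1}{\alpha}\,\tfrac{(u_{l,*}(0)-1)^{2}}{u_{l,*}^{2}(0)}\,\|{\gamma^{(n-1)}}'-{\gamma^{(n-2)}}'\|$. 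Substituting the bound from (i) yields the crucial factor
\[
\tfrac{1}{\alpha}\cdot\tfrac{(u_{l,*}(0)-1)^{2}}{u_{l,*}^{2}(0)}\cdot\tfrac{u_{l,*}(0)}{2}\cdot\tfrac{1+\alpha}{2}=\tfrac{1+\alpha}{2\alpha}\mathcal{M},
\]
so $\|\hat{\Phi}_{2}^{(n)}-\hat{\Phi}_{2}^{(n-1)}\|\leq\bigl(\tfrac{1+\alpha}{2\alpha}\mathcal{M}+O(\kappa)+O(\epsilon)\bigr)\|\mathbf{\hat{\Phi}}^{(n-1)}-\mathbf{\hat{\Phi}}^{(n-2)}\|$.

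For (iii), integrating $\eqref{b45}_{1}$ along the backward characteristic of $\hat{\Phi}_{1}^{(n)}$ back to $x=L$ (as in~\cite{Qup}) and subtracting the analogous expression for $n-1$, the only non-small contribution is from the boundary condition $\eqref{b45}_{3}$, which contributes exactly $\alpha\,[\hat{\Phi}_{2}^{(n-1)}(t_{*},L)-\hat{\Phi}_{2}^{(n-2)}(t_{**},L)]$. After accounting for the $O(\kappa\epsilon)$ error from the different characteristic root times $t_{*},t_{**}$, this yields $\|\hat{\Phi}_{1}^{(n)}-\hat{\Phi}_{1}^{(n-1)}\|\leq(\alpha+O(\kappa)+O(\epsilon))\|\mathbf{\hat{\Phi}}^{(n-1)}-\mathbf{\hat{\Phi}}^{(n-2)}\|$. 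Combining (ii) and (iii) gives
\[
\|\mathbf{\hat{\Phi}}^{(n)}-\mathbf{\hat{\Phi}}^{(n-1)}\|\leq\bigl(\max\{\alpha,\tfrac{1+\alpha}{2\alpha}\mathcal{M}\}+O(\kappa)+O(\epsilon)\bigr)\|\mathbf{\hat{\Phi}}^{(n-1)}-\mathbf{\hat{\Phi}}^{(n-2)}\|,
\]
and choosing $\kappa,\epsilon_{1}$ small enough so that the bracket is smaller than $\beta$, the induction closes with $\|\mathbf{\hat{\Phi}}^{(n)}-\mathbf{\hat{\Phi}}^{(n-1)}\|\leq C_{1}\epsilon\beta^{n-1}$.

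The main technical obstacle is item (ii): one must carefully isolate the leading $\tfrac{1+\alpha}{2\alpha}\mathcal{M}$ factor, which requires both the Taylor expansion of $G$ with the vanishing terms $\partial G/\partial x(x^{*},0,0,0)=0$ and $G(x^{*},0,0,0)=0$, and the sharp application of Theorem~A.2 to produce the $\tfrac{1+\alpha}{2}$ factor from $U^{(k)}=\tfrac{1}{2}(\alpha\hat{\Phi}_{2}^{(k)}-\hat{\Phi}_{1}^{(k)})$. Tracking the characteristic mismatch $\eta_{2}^{(n)}(\cdot;t,x)-\eta_{2}^{(n-1)}(\cdot;t,x)$ and the corresponding shift of the root time $s_{2}^{(n)}$ from $\gamma^{(n-1)}$ to $\gamma^{(n-2)}$ must be done with care but contributes only $O(\kappa\epsilon)$ or $O(\epsilon^{2})$, absorbable into the contraction factor once $\epsilon_{1}$ is chosen suitably small.
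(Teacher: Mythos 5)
Your proposal follows essentially the same route as the paper: a bootstrap/induction on $n$, the $C^{0}$ and $C^{1}$ estimates for $\gamma^{(n-1)}-\gamma^{(n-2)}$ obtained from Theorem~A.2 applied to the shock ODE, subtraction of the characteristic representations of $\hat{\Phi}_{2}^{(n)}$ and $\hat{\Phi}_{2}^{(n-1)}$ with the boundary contribution controlled through the Taylor expansion of $G$ (using \eqref{b31}--\eqref{B32}), and the outlet boundary giving the factor $\alpha$ for $\hat{\Phi}_{1}$, yielding the contraction factor $\max\{\alpha,\frac{1+\alpha}{2\alpha}\mathcal{M}\}+O(\kappa)+O(\epsilon)<\beta$. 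The only cosmetic difference is that the paper integrates both $\hat{\Phi}_{2}^{(n)}$ and $\hat{\Phi}_{2}^{(n-1)}$ along the same characteristic $\eta_{2}^{(n)}$ and absorbs the mismatch as a commutator term $(\lambda_{2}(\cdot,\mathbf{\hat{\Phi}}^{(n-1)})-\lambda_{2}(\cdot,\mathbf{\hat{\Phi}}^{(n-2)}))\partial_{x}\hat{\Phi}_{2}^{(n-1)}$ plus a boundary-shift term $\|\partial_{x}\hat{\Phi}_{2}^{(n-1)}\|\,\|\gamma^{(n-1)}-\gamma^{(n-2)}\|$, rather than tracking two distinct characteristics as you describe; both devices produce the same $O(\epsilon)\|\mathbf{\hat{\Phi}}^{(n-1)}-\mathbf{\hat{\Phi}}^{(n-2)}\|$ remainder.
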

\begin{proof}
We use the bootstrap argument to prove this lemma, that is, assume
\begin{align}
\|\mathbf{\hat{\Phi}}^{(n-1)}-\mathbf{\hat{\Phi}}^{(n-2)}\|\leq C_{1}\epsilon\beta^{n-2}, \quad n\geq2\label{CC1}
\end{align}
holds, we prove~\eqref{c42}. By~\eqref{b44} and~\lemref{L1}, \eqref{c42} holds for $n=1$. Then, we only give the proof of ~\eqref{c42} for $n\geq2$.
We let that the definition of $\eta_{2}^{(n)}(s;t,x)$ and $s_{2}^{(n)}$ are same as~\eqref{c14} in~\lemref{L1} and integrate~$\eqref{b45}_{2}$ along the characteristic curve~\eqref{c14} to get
\begin{align}
\hat{\Phi}_{2}^{(n)}(t,x)=&\hat{\Phi}_{2}^{(n)}(s_{2}^{(n)}+t,\gamma^{(n-1)}(s_{2}^{(n)}+t))\notag\\
&-\int_{0}^{s_{2}^{(n)}}\Big(\partial_{t}\hat{\Phi}^{(n)}_{2}+\lambda_{2}(\eta_{2}^{(n)}(s;t,x),\mathbf{\hat{\Phi}}^{(n-1)})
\partial_{x}\hat{\Phi}^{(n)}_{2}\Big)(s+t,\eta_{2}^{(n)}(s;t,x))ds\notag\\
=&\hat{\Phi}_{2}^{(n)}(s_{2}^{(n)}+t,\gamma^{(n-1)}(s_{2}^{(n)}+t))\notag\\
&-\int_{0}^{s_{2}^{(n)}}\Big(-\frac{1}{\alpha}\frac{a'}{a}
\frac{1}{\Upsilon_{1,*}+\Upsilon_{2,*}+2}\hat{\Phi}^{(n-1)}_{1}\notag\\
&-\frac{a'}{a}\frac{1}{\Upsilon_{1,*}+\Upsilon_{2,*}+2}\hat{\Phi}^{(n-1)}_{2}\Big)
(s+t,\eta_{2}^{(n)}(s;t,x))ds,\label{c47}
\end{align}
\begin{align}
\hat{\Phi}_{2}^{(n-1)}(t,x)=&\hat{\Phi}_{2}^{(n-1)}(s_{2}^{(n)}+t,\gamma^{(n-1)}(s_{2}^{(n)}+t))\notag\\
&-\int_{0}^{s_{2}^{(n)}}\Big(\partial_{t}\hat{\Phi}^{(n-1)}_{2}+\lambda_{2}(\eta_{2}^{(n)}(s;t,x),\mathbf{\hat{\Phi}}^{(n-1)})
\partial_{x}\hat{\Phi}^{(n-1)}_{2}\Big)(s+t,\eta_{2}^{(n)}(s;t,x))ds\notag\\
=&\hat{\Phi}_{2}^{(n-1)}(s_{2}^{(n)}+t,\gamma^{(n-1)}(s_{2}^{(n)}+t))\notag\\
&-\int_{0}^{s_{2}^{(n)}}\Big(\partial_{t}\hat{\Phi}^{(n-1)}_{2}+(\lambda_{2}(\eta_{2}^{(n)}(s;t,x)
,\mathbf{\hat{\Phi}}^{(n-1)})-\lambda_{2}(\eta_{2}^{(n)}(s;t,x),\mathbf{\hat{\Phi}}^{(n-2)}))\partial_{x}\hat{\Phi}^{(n-1)}_{2}\notag\\
&+\lambda_{2}(\eta_{2}^{(n)}(s;t,x)
,\mathbf{\hat{\Phi}}^{(n-2)})\partial_{x}\hat{\Phi}^{(n-1)}_{2}\Big)(s+t,\eta_{2}^{(n)}(s;t,x))ds\notag\\
=&\hat{\Phi}_{2}^{(n-1)}(s_{2}^{(n)}+t,\gamma^{(n-1)}(s_{2}^{(n)}+t))\notag\\
&-\int_{0}^{s_{2}^{(n)}}\Big((\lambda_{2}(\eta_{2}^{(n)}(s;t,x)
,\mathbf{\hat{\Phi}}^{(n-1)})-\lambda_{2}(\eta_{2}^{(n)}(s;t,x),\mathbf{\hat{\Phi}}^{(n-2)}))\partial_{x}\hat{\Phi}^{(n-1)}_{2}\Big)\notag\\
&(s+t,\eta_{2}^{(n)}(s;t,x))ds\notag\\
&-\int_{0}^{s_{2}^{(n)}}\Big(-\frac{1}{\alpha}\frac{a'}{a}
\frac{1}{\Upsilon_{1,*}+\Upsilon_{2,*}+2}\hat{\Phi}^{(n-2)}_{1}\notag\\
&-\frac{a'}{a}\frac{1}{\Upsilon_{1,*}+\Upsilon_{2,*}+2}\hat{\Phi}^{(n-2)}_{2}\Big)
(s+t,\eta_{2}^{(n)}(s;t,x))ds.\label{c48}
\end{align}
Subtracting~\eqref{c47} and~\eqref{c48}, we obtain
\begin{align}
&\hat{\Phi}_{2}^{(n)}(t,x)-\hat{\Phi}_{2}^{(n-1)}(t,x)\notag\\
=&\hat{\Phi}_{2}^{(n)}(s_{2}^{(n)}+t,\gamma^{(n-1)}(s_{2}^{(n)}+t))-\hat{\Phi}_{2}^{(n-1)}(s_{2}^{(n)}+t,\gamma^{(n-1)}(s_{2}^{(n)}+t))\notag\\
&+\int_{0}^{s_{2}^{(n)}}\Big((\lambda_{2}(\eta_{2}^{(n)}(s;t,x)
,\mathbf{\hat{\Phi}}^{(n-1)})-\lambda_{2}(\eta_{2}^{(n)}(s;t,x),\mathbf{\hat{\Phi}}^{(n-2)}))\partial_{x}\hat{\Phi}^{(n-1)}_{2}\Big)
(s+t,\eta_{2}^{(n)}(s;t,x))ds\notag\\
&-\int_{0}^{s_{2}^{(n)}}\Big(-\frac{1}{\alpha}\frac{a'}{a}
\frac{1}{\Upsilon_{1,*}+\Upsilon_{2,*}+2}(\hat{\Phi}^{(n-1)}_{1}-\hat{\Phi}^{(n-2)}_{1})\notag\\
&-\frac{a'}{a}\frac{1}{\Upsilon_{1,*}+\Upsilon_{2,*}+2}(\hat{\Phi}^{(n-1)}_{2}
-\hat{\Phi}^{(n-2)}_{2})\Big)(s+t,\eta_{2}^{(n)}(s;t,x))ds.\label{c49}
\end{align}
By~\eqref{a2}, we have
\begin{align}
&|\hat{\Phi}_{2}^{(n)}(t,x)-\hat{\Phi}_{2}^{(n-1)}(t,x)|\notag\\
\leq&|\hat{\Phi}_{2}^{(n)}(s_{2}^{(n)}+t,\gamma^{(n-1)}(s_{2}^{(n)}+t))-\hat{\Phi}_{2}^{(n-1)}(s_{2}^{(n)}+t,\gamma^{(n-1)}(s_{2}^{(n)}+t))|
\notag\\
&+(C\epsilon+C\kappa)\|\mathbf{\hat{\Phi}}^{(n-1)}-\mathbf{\hat{\Phi}}^{(n-2)}\|.\label{c50}
\end{align}
And using~\eqref{a2}, \eqref{b3}, \eqref{b31}-\eqref{B32}, $\eqref{b45}_{4}$ and~\eqref{BT20}, one has
\begin{align}
&|\hat{\Phi}_{2}^{(n)}(s_{2}^{(n)}+t,\gamma^{(n-1)}(s_{2}^{(n)}+t))-\hat{\Phi}_{2}^{(n-1)}(s_{2}^{(n)}+t,\gamma^{(n-1)}(s_{2}^{(n)}+t))|
\notag\\
\leq&|\hat{\Phi}_{2}^{(n)}(s_{2}^{(n)}+t,\gamma^{(n-1)}(s_{2}^{(n)}+t))-\hat{\Phi}_{2}^{(n-1)}(s_{2}^{(n)}+t,\gamma^{(n-2)}(s_{2}^{(n)}+t))|
\notag\\
&+|\hat{\Phi}_{2}^{(n-1)}(s_{2}^{(n)}+t,\gamma^{(n-2)}(s_{2}^{(n)}+t))-\hat{\Phi}_{2}^{(n-1)}(s_{2}^{(n)}+t,\gamma^{(n-1)}(s_{2}^{(n)}+t))|
\notag\\
\leq&\frac{1}{\alpha}|G(\gamma^{(n-1)}(s_{2}^{(n)}+t),{\gamma^{(n-1)}}'(s_{2}^{(n)}+t),
\bar{\rho}_{l}^{(T)}(s_{2}^{(n)}+t,\gamma^{(n-1)}(s_{2}^{(n)}+t)),\notag\\
&\quad\quad~~\bar{u}_{l}^{(T)}(s_{2}^{(n)}+t,\gamma^{(n-1)}(s_{2}^{(n)}+t)))\notag\\
&-G(\gamma^{(n-2)}(s_{2}^{(n)}+t),{\gamma^{(n-2)}}'(s_{2}^{(n)}+t),
\bar{\rho}_{l}^{(T)}(s_{2}^{(n)}+t,\gamma^{(n-2)}(s_{2}^{(n)}+t)),\notag\\
&\quad\quad~~\bar{u}_{l}^{(T)}(s_{2}^{(n)}+t,\gamma^{(n-2)}(s_{2}^{(n)}+t)))|
\notag\\
&+\|\partial_{x}\hat{\Phi}_{2}^{(n-1)}\|\|\gamma^{(n-1)}-\gamma^{(n-2)}\|\notag\\
\leq&\frac{1}{\alpha}\Big(C\epsilon\|\gamma^{(n-1)}-\gamma^{(n-2)}\|+\Big((1+C\kappa)\frac{(u_{l,*}(0)-1)^{2}}{u^{2}_{l,*}(0)}+C\epsilon\Big)
\|{\gamma^{(n-1)}}'-{\gamma^{(n-2)}}'\|\notag\\
&+\Big((1+C\kappa)\frac{1}{\rho_{l,*}(0)}+C\epsilon\Big)C_{l}\epsilon\|\gamma^{(n-1)}-\gamma^{(n-2)}\|\notag\\
&+\Big((1+C\kappa)\frac{2u_{l,*}(0)-1}
{u_{l,*}^{2}(0)}+C\epsilon\Big)C_{l}\epsilon\|\gamma^{(n-1)}-\gamma^{(n-2)}\|\Big)
+C_{2}\epsilon\|\gamma^{(n-1)}-\gamma^{(n-2)}\|.\label{c51}
\end{align}
Using equations~\eqref{RR1} and $$\frac{d\gamma^{(n-2)}(t)}{dt}=F(\gamma^{(n-2)}(t),U^{(n-2)}(t,\gamma^{(n-2)}(t)),\bar{\rho}_{l}^{(T)}(t,\gamma^{(n-2)}(t)),
\bar{u}_{l}^{(T)}(t,\gamma^{(n-2)}(t)))$$
and estimates~\eqref{bb33}-\eqref{bb34},
we follow from~\eqref{a2}, \eqref{b3}, \eqref{b33}-\eqref{B34} and~\eqref{BT20}
\begin{align*}
&\|\gamma^{(n-1)}-\gamma^{(n-2)}\|\notag\\
\leq&(1+C\epsilon)\frac{\exp(|\frac{\partial F}{\partial x}(x^{*},0,0,0)|T)}
{|\frac{\partial F}{\partial x}(x^{*},0,0,0)|}\Big(|\frac{\partial F}{\partial U}(x^{*},0,0,0)|\|U^{(n-1)}-U^{(n-2)}\|\notag\\
&+|\frac{\partial F}{\partial \bar{\rho}_{l}^{(T)}}(x^{*},0,0,0)|\|\bar{\rho}_{l}^{(T)}(t,\gamma^{(n-1)}(t))-\bar{\rho}_{l}^{(T)}(t,\gamma^{(n-2)}(t))\|\notag\\
&+|\frac{\partial F}{\partial \bar{u}_{l}^{(T)}}(x^{*},0,0,0)|\|\bar{u}_{l}^{(T)}(t,\gamma^{(n-1)}(t))-\bar{u}_{l}^{(T)}(t,\gamma^{(n-2)}(t))\|\Big)\notag\\
\leq&(1+C\epsilon)(1+C\kappa)\frac{1}{\theta\kappa}\Big(\frac{1+\alpha}{2}\|\mathbf{\hat{\Phi}}^{(n-1)}-\mathbf{\hat{\Phi}}^{(n-2)}\|
+\frac{1}{\rho_{l,*}(0)}C_{l}\epsilon\|\gamma^{(n-1)}-\gamma^{(n-2)}\|\notag\\
&+\frac{2}{u_{l,*}(0)}C_{l}\epsilon\|\gamma^{(n-1)}-\gamma^{(n-2)}\|\Big).
\end{align*}
Furthermore, we get
\begin{align}
\|\gamma^{(n-1)}-\gamma^{(n-2)}\|\leq
\frac{(1+C\epsilon)(1+C\kappa)\frac{1}{\theta\kappa}\frac{1+\alpha}{2}}{1-(1+C\epsilon)(1+C\kappa)\frac{1}{\theta\kappa}
(\frac{1}{\rho_{l,*}(0)}+\frac{2}{u_{l,*}(0)})C_{l}\epsilon}\|\mathbf{\hat{\Phi}}^{(n-1)}-\mathbf{\hat{\Phi}}^{(n-2)}\|.\label{c52}
\end{align}
Then, by~\eqref{b3}, \eqref{b33}-\eqref{B34}, \eqref{BT20} and~\eqref{c52}, we have
\begin{align}
&\|{\gamma^{(n-1)}}'-{\gamma^{(n-2)}}'\|\notag\\
\leq&(1+C\epsilon)\Big(|\frac{\partial F}{\partial U}(x^{*},0,0,0)|\|U^{(n-1)}-U^{(n-2)}\|\notag\\
&+|\frac{\partial F}{\partial \bar{\rho}_{l}^{(T)}}(x^{*},0,0,0)|\|\bar{\rho}_{l}^{(T)}(t,\gamma^{(n-1)}(t))-\bar{\rho}_{l}^{(T)}(t,\gamma^{(n-2)}(t))\|\notag\\
&+|\frac{\partial F}{\partial \bar{u}_{l}^{(T)}}(x^{*},0,0,0)|\|\bar{u}_{l}^{(T)}(t,\gamma^{(n-1)}(t))-\bar{u}_{l}^{(T)}(t,\gamma^{(n-2)}(t))\|\Big)\notag\\
&+(1+C\epsilon)\exp(|\frac{\partial F}{\partial x}(x^{*},0,0,0)|T)\Big(|\frac{\partial F}{\partial U}(x^{*},0,0,0)|\|U^{(n-1)}-U^{(n-2)}\|\notag\\
&+|\frac{\partial F}{\partial \bar{\rho}_{l}^{(T)}}(x^{*},0,0,0)|\|\bar{\rho}_{l}^{(T)}(t,\gamma^{(n-1)}(t))-\bar{\rho}_{l}^{(T)}(t,\gamma^{(n-2)}(t))\|\notag\\
&+|\frac{\partial F}{\partial \bar{u}_{l}^{(T)}}(x^{*},0,0,0)|\|\bar{u}_{l}^{(T)}(t,\gamma^{(n-1)}(t))-\bar{u}_{l}^{(T)}(t,\gamma^{(n-2)}(t))\|\Big)\notag\\
\leq&(1+C\epsilon)(1+C\kappa)\Big(u_{l,*}(0)\frac{1+\alpha}{2}\|\mathbf{\hat{\Phi}}^{(n-1)}-\mathbf{\hat{\Phi}}^{(n-2)}\|
+\frac{u_{l,*}(0)}{\rho_{l,*}(0)}C_{l}\epsilon\|\gamma^{(n-1)}-\gamma^{(n-2)}\|\notag\\
&+2C_{l}\epsilon\|\gamma^{(n-1)}-\gamma^{(n-2)}\|\Big)\notag\\
\leq&(1+C\epsilon)(1+C\kappa)\Big(u_{l,*}(0)\frac{1+\alpha}{2}+(\frac{u_{l,*}(0)}{\rho_{l,*}(0)}+2)C_{l}\epsilon
\frac{(1+C\epsilon)(1+C\kappa)\frac{1}{\theta\kappa}\frac{1+\alpha}{2}}{1-(1+C\epsilon)(1+C\kappa)\frac{1}{\theta\kappa}
(\frac{1}{\rho_{l,*}(0)}+\frac{2}{u_{l,*}(0)})C_{l}\epsilon}\Big)\notag\\
&\cdot\|\mathbf{\hat{\Phi}}^{(n-1)}-\mathbf{\hat{\Phi}}^{(n-2)}\|.
\label{c53}
\end{align}
Thus, by~\eqref{a7},\eqref{a8}, \eqref{CC1} and~\eqref{c50}-\eqref{c53}, we get
\begin{align}
&|\hat{\Phi}_{2}^{(n)}(t,x)-\hat{\Phi}_{2}^{(n-1)}(t,x)|\notag\\
\leq&(1+C\epsilon)(1+C\kappa)\Big(\frac{1+\alpha}{2\alpha}\frac{(u_{l,*}(0)-1)^{2}}{u_{l,*}(0)}+\frac{1}{\alpha}
(\frac{(u_{l,*}(0)-1)^{2}}{\rho_{l,*}(0)u_{l,*}(0)}+\frac{2(u_{l,*}(0)-1)^{2}}{u_{l,*}^{2}(0)})C_{l}\epsilon\notag\\
&\cdot\frac{(1+C\epsilon)(1+C\kappa)\frac{1}{\theta\kappa}\frac{1+\alpha}{2}}{1-(1+C\epsilon)(1+C\kappa)\frac{1}{\theta\kappa}
(\frac{1}{\rho_{l,*}(0)}+\frac{2}{u_{l,*}(0)})C_{l}\epsilon}\Big)
\|\mathbf{\hat{\Phi}}^{(n-1)}-\mathbf{\hat{\Phi}}^{(n-2)}\|\notag\\
&+(C\kappa+C\epsilon)\|\mathbf{\hat{\Phi}}^{(n-1)}-\mathbf{\hat{\Phi}}^{(n-2)}\|\notag\\
\leq&\beta\|\mathbf{\hat{\Phi}}^{(n-1)}-\mathbf{\hat{\Phi}}^{(n-2)}\|\notag\\
\leq&C_{1}\epsilon\beta^{n-1}.\label{c54}
\end{align}
We finish the proof of~\lemref{L2}.

\end{proof}

\subsection{Continuity modulus estimate and convergence }\label{suu3}
\begin{lemma}\label{L3}
There exists a constant $\epsilon_{1}>0$, such that for any $\epsilon\in(0,\epsilon_{1}]$, if the condition~\eqref{c1} holds, then the sequence $\{\mathbf{\hat{\Phi}}^{(n)}\}$ satisfies
\begin{align}
\mathop{\max}\limits_{i=1,2}\{\varpi(\delta|\partial_{t}\hat{\Phi}_{i}^{(n)}(\cdot,\cdot))
+\varpi(\delta|\partial_{x}\hat{\Phi}_{i}^{(n)}(\cdot,\cdot))\}<\tilde{\Omega}(\delta).\label{LC1}
\end{align}
where $$\varpi(\delta|f(\cdot,\cdot))=\mathop{\sup}\limits_{\mathop{|t_{1}-t_{2}|\leq\delta,}\limits_{|x_{1}-x_{2}|\leq\delta}}|f(t_{1},x_{1})-f(t_{2},x_{2})|,$$
$\tilde{\Omega}(\delta)=4\mathop{\max}\limits_{i=1,2,3}\{C_{i}\}\Omega(\delta)$, and $\Omega(\delta)$ is a continuous function of $\delta\in(0,1)$, independent of $n$, to be determined later with
$$\mathop{\lim}\limits_{\delta\rightarrow0}\Omega(\delta)=0.$$

\end{lemma}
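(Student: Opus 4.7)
The plan is to establish \eqref{LC1} by induction on $n$, simultaneously for the four quantities $\partial_{t}\hat{\Phi}_{i}^{(n)}$ and $\partial_{x}\hat{\Phi}_{i}^{(n)}$ with $i=1,2$. Set
\begin{align*}
\omega_{n}(\delta) := \max_{i=1,2}\big\{\varpi(\delta\mid\partial_{t}\hat{\Phi}_{i}^{(n)}) + \varpi(\delta\mid\partial_{x}\hat{\Phi}_{i}^{(n)})\big\},
\end{align*}
so that $\omega_{0}(\delta)=0$ by \eqref{b44}. The target is a recursive inequality $\omega_{n}(\delta)\leq A\,\omega_{n-1}(\delta)+\Omega(\delta)$ with a constant $A<1$ and an $n$-independent modulus $\Omega(\delta)\to 0$ as $\delta\to 0$; iterating then gives $\omega_{n}(\delta)\leq\Omega(\delta)/(1-A)$ uniformly in $n$, which is \eqref{LC1} after absorbing the multiplicative constants into the definition of $\tilde{\Omega}(\delta)$.

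For $\hat{\Phi}_{1}^{(n)}$, the backward $1$-characteristic starting from $(t,x)$ reaches the fixed boundary $x=L$ in finite time, where the Dirichlet datum $\varphi(t)+\alpha\hat{\Phi}_{2}^{(n-1)}(t,L)$ has a continuity modulus controlled by $\|\varphi\|_{C^{1}}$ and by $\omega_{n-1}(\delta)$. Differentiating $\eqref{b45}_{1}$ along the characteristic as in \cite{Qup} and comparing the resulting characteristic-integral formula at two nearby points produces a bound
\begin{align*}
\varpi(\delta\mid\partial_{t}\hat{\Phi}_{1}^{(n)})+\varpi(\delta\mid\partial_{x}\hat{\Phi}_{1}^{(n)}) \leq \Omega_{1}(\delta) + C(\kappa+\epsilon)\,\omega_{n-1}(\delta),
\end{align*}
where $\Omega_{1}(\delta)$ collects fixed boundary moduli together with the Lipschitz contribution of the source term, whose Lipschitz constant is $O(\kappa)$ by \eqref{a2}.

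For $\hat{\Phi}_{2}^{(n)}$, the backward $2$-characteristic \eqref{c14} reaches the moving shock curve at $(t+s_{2}^{(n)},\gamma^{(n-1)}(t+s_{2}^{(n)}))$. Applying the directional operator $\partial_{t}+\sigma^{(n)}\partial_{x}$ to $\eqref{b45}_{2}$ as in \eqref{c29}, integrating along the characteristic and subtracting at two nearby points, the continuity modulus of $\partial_{t}\hat{\Phi}_{2}^{(n)}$ is bounded by the modulus of the boundary trace $\tfrac{1}{\alpha}G(\gamma^{(n-1)},{\gamma^{(n-1)}}',\bar{\rho}_{l}^{(T)},\bar{u}_{l}^{(T)})$ once differentiated in $t$, plus Lipschitz-in-data contributions from the right-hand side. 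Using \eqref{c35} to express $\partial_{x}\hat{\Phi}_{2}^{(n)}$ in terms of $\partial_{t}\hat{\Phi}_{2}^{(n)}$ transfers this into a bound on $\partial_{x}\hat{\Phi}_{2}^{(n)}$. Mirroring the sharp coefficient book-keeping already carried out in \eqref{c32}--\eqref{c34} then yields
\begin{align*}
\varpi(\delta\mid\partial_{t}\hat{\Phi}_{2}^{(n)})+\varpi(\delta\mid\partial_{x}\hat{\Phi}_{2}^{(n)}) \leq \Omega_{2}(\delta) + \Big(\mathcal{M}\tfrac{1+\alpha}{2\alpha}+C\kappa+C\epsilon\Big)\,\omega_{n-1}(\delta).
\end{align*}
Taking the maximum over $i=1,2$ and setting $A:=\mathcal{M}\frac{1+\alpha}{2\alpha}+C(\kappa+\epsilon)$, $\Omega(\delta):=\Omega_{1}(\delta)+\Omega_{2}(\delta)$ produces the announced recursion; choosing $\kappa$ and $\epsilon_{1}$ small enough forces $A<\beta<1$ by the same choice of $\beta$ as in \lemref{L2}.

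The main obstacle is verifying that $\Omega_{2}(\delta)$ can be taken independent of $n$. The delicate pieces are: the modulus of $t\mapsto{\gamma^{(n-1)}}'(t)$, controlled by $\|{\gamma^{(n-1)}}''\|\leq C\epsilon$ from \eqref{TT3} (hence Lipschitz with $n$-independent constant); the modulus of the composed functions $t\mapsto\bar{\rho}_{l}^{(T)}(t,\gamma^{(n-1)}(t))$ and $t\mapsto\bar{u}_{l}^{(T)}(t,\gamma^{(n-1)}(t))$, controlled by \eqref{BT20} together with $\|{\gamma^{(n-1)}}'\|\leq C\epsilon$; and the Lipschitz dependence of the characteristic foot $(s_{2}^{(n)},\eta_{2}^{(n)}(s;t,x))$ on $(t,x)$, which follows from the uniform $C^{1}$ bound of $\lambda_{2}(\cdot,\mathbf{\hat{\Phi}}^{(n-1)})$ and of $\gamma^{(n-1)}$ provided by \lemref{L1}. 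All of these ingredients are $n$-independent, so $\Omega_{2}(\delta)$ can indeed be constructed uniformly in $n$ and shown to vanish as $\delta\to 0$ by uniform continuity on compact sets, completing the plan.
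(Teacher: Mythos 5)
Your plan is essentially the paper's own argument: an induction along the backward characteristics, applying the operator $\partial_{t}+\sigma^{(n)}\partial_{x}$ to $\eqref{b45}_{2}$, differentiating the shock trace $\frac{1}{\alpha}G$ in $t$ (which brings in ${\gamma^{(n-1)}}''=F_{t}$ and hence the modulus of $\partial_{t}U^{(n-1)}$ with the contraction coefficient $\mathcal{M}\frac{1+\alpha}{2\alpha}<1$), converting $\partial_{x}$-moduli to $\partial_{t}$-moduli through \eqref{c35}, and choosing $\Omega(\delta)$ explicitly from the moduli of $\varphi'$, $\partial\bar{\rho}_{l}^{(T)}$, $\partial\bar{u}_{l}^{(T)}$ as in \eqref{B63}; the paper merely phrases the recursion as preservation of the fixed bounds $C_{1}\Omega(\delta),\dots,C_{3}\Omega(\delta)$ rather than summing a geometric series, which is equivalent. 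The one inaccuracy is your coefficient for the $\hat{\Phi}_{1}^{(n)}$ branch: the boundary datum at $x=L$ is $\varphi(t)+\alpha\hat{\Phi}_{2}^{(n-1)}(t,L)$, so its contribution to the recursion is $\alpha\,\varpi(\delta\mid\partial_{t}\hat{\Phi}_{2}^{(n-1)}(\cdot,L))$ plus $O(\kappa+\epsilon)$ source terms, not merely $C(\kappa+\epsilon)\,\omega_{n-1}(\delta)$; the argument still closes because the overall coefficient is $\max\{\alpha,\mathcal{M}\frac{1+\alpha}{2\alpha}\}+C(\kappa+\epsilon)<1$, consistent with the choice of $\beta$ in \lemref{L2}.
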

\begin{proof}
We also use mathematical induction to prove this lemma, that is, under the assumption that estimates of $\mathbf{\hat{\Phi}}^{(n-1)}$ are known, we prove
\begin{align}
%&\varpi(\delta|\partial_{t}\hat{\Phi}_{1}^{(n)}(\cdot,x))<C_{1}\Omega(\delta),\label{c55}\\
%&\varpi(\delta|\partial_{x}\hat{\Phi}_{1}^{(n)}(\cdot,x))<C_{2}\Omega(\delta),\label{c56}\\
%&\varpi(\delta|\partial_{t}\hat{\Phi}_{1}^{(n)}(t,\cdot))<C_{2}\Omega(\delta),\label{c57}\\
%&\varpi(\delta|\partial_{x}\hat{\Phi}_{1}^{(n)}(t,\cdot))<C_{4}\Omega(\delta),\label{c58}\\
&\mathop{\max}\limits_{i=1,2}\varpi(\delta|\partial_{t}\hat{\Phi}_{i}^{(n)}(\cdot,x))<C_{1}\Omega(\delta),\label{c59}\\
&\mathop{\max}\limits_{i=1,2}\varpi(\delta|\partial_{x}\hat{\Phi}_{i}^{(n)}(\cdot,x))<C_{2}\Omega(\delta),\label{c60}\\
&\mathop{\max}\limits_{i=1,2}\varpi(\delta|\partial_{t}\hat{\Phi}_{i}^{(n)}(t,\cdot))<C_{2}\Omega(\delta),\label{c61}\\
&\mathop{\max}\limits_{i=1,2}\varpi(\delta|\partial_{x}\hat{\Phi}_{i}^{(n)}(t,\cdot))<C_{3}\Omega(\delta),\label{c62}
\end{align}
where $C_{3}$ is a positive constant, which will be determined later and
 $$\varpi(\delta|f(\cdot,x))=\mathop{\sup}\limits_{|t_{1}-t_{2}|\leq\delta}|f(t_{1},x)-f(t_{2},x)|,$$
$$\varpi(\delta|f(t,\cdot))=\mathop{\sup}\limits_{|x_{1}-x_{2}|\leq\delta}|f(t,x_{1})-f(t,x_{2})|.$$
Obviously, this lemma holds for $n=0$. Next, we prove~\eqref{c59}-\eqref{c62} hold for $n\geq1$. First, for any $\delta\in(0,1)$, we choose
\begin{align}
\Omega(\delta)=\delta+\varpi(\delta|\varphi')+\varpi(\delta|\frac{\partial\bar{\rho}_{l}^{(T)}}{\partial t}(\cdot,\cdot))
+\varpi(\delta|\frac{\partial\bar{\rho}_{l}^{(T)}}{\partial x}(\cdot,\cdot))+\varpi(\delta|\frac{\partial\bar{u}_{l}^{(T)}}{\partial t}(\cdot,\cdot))
+\varpi(\delta|\frac{\partial\bar{u}_{l}^{(T)}}{\partial x}(\cdot,\cdot)),\label{B63}
\end{align}
where $$\varpi(\delta|f)=\mathop{\sup}\limits_{|t_{1}-t_{2}|\leq\delta}|f(t_{1})-f(t_{2})|.$$
Since $\varphi, \bar{\rho}_{l}^{(T)},\bar{u}_{l}^{(T)}\in C^{1}$, we have $\mathop{\lim}\limits_{\delta\rightarrow0}\Omega(\delta)=0.$

Before proving~\eqref{c59}-\eqref{c62}, we first discuss the following estimate. For any $t, \delta\in\mathbb{R}$, by~\eqref{a2}, \eqref{b3}, \eqref{b31}-\eqref{B32}, \eqref{BT20}, \eqref{c17} and~\eqref{c33}, we get
\begin{small}{\begin{align}
&\Big|\frac{1}{\alpha}(G_{t}(t)-G_{t}(t+\delta))\Big|\notag\\
\leq&(1+C\epsilon+C\kappa)\frac{1}{\alpha}\frac{(u_{l,*}(0)-1)^{2}}{u^{2}_{l,*}(0)}|{\gamma^{(n-1)}}''(t)-{\gamma^{(n-1)}}''(t+\delta)|
+(1+C\epsilon+C\kappa)\frac{1}{\alpha}\frac{1}{\rho_{l,*}(0)}\Omega(\delta)\notag\\
&+(1+C\epsilon+C\kappa)\frac{1}{\alpha}\frac{2u_{l,*}(0)-1}{u_{l,*}^{2}(0)}\Omega(\delta)
+C\epsilon\Omega(\delta)+C\epsilon^{2}\delta,\label{c75}
\end{align}}\end{small}
where
\begin{align*}
G_{t}(t)\mathop{=}\limits^{\triangle}
&\frac{\partial G}{\partial x}(\gamma^{(n-1)}(t),{\gamma^{(n-1)}}'(t),\bar{\rho}_{l}^{(T)}(t,\gamma^{(n-1)}(t)),\bar{u}_{l}^{(T)}(t,\gamma^{(n-1)}(t))){\gamma^{(n-1)}}'(t)\notag\\
&+\frac{\partial G}{\partial v}(\gamma^{(n-1)}(t),{\gamma^{(n-1)}}'(t),\bar{\rho}_{l}^{(T)}(t,\gamma^{(n-1)}(t)),\bar{u}_{l}^{(T)}(t,\gamma^{(n-1)}(t))){\gamma^{(n-1)}}''(t)\notag\\
&+\frac{\partial G}{\partial \bar{\rho}_{l}^{(T)}}(\gamma^{(n-1)}(t),{\gamma^{(n-1)}}'(t),\bar{\rho}_{l}^{(T)}(t,\gamma^{(n-1)}(t)),\bar{u}_{l}^{(T)}(t,\gamma^{(n-1)}(t)))\notag\\
&\cdot(\frac{\partial\bar{\rho}_{l}^{(T)}}{\partial t}(t,\gamma^{(n-1)}(t))+\frac{\partial\bar{\rho}_{l}^{(T)}}{\partial x}(t,\gamma^{(n-1)}(t)){\gamma^{(n-1)}}'(t))\notag\\
&+\frac{\partial G}{\partial \bar{u}_{l}^{(T)}}(\gamma^{(n-1)}(t),{\gamma^{(n-1)}}'(t),\bar{\rho}_{l}^{(T)}(t,\gamma^{(n-1)}(t)),\bar{u}_{l}^{(T)}(t,\gamma^{(n-1)}(t)))\notag\\
&\cdot(\frac{\partial\bar{u}_{l}^{(T)}}{\partial t}(t,\gamma^{(n-1)}(t))+\frac{\partial\bar{u}_{l}^{(T)}}{\partial x}(t,\gamma^{(n-1)}(t)){\gamma^{(n-1)}}'(t))\notag
\end{align*}
Since $\gamma^{(n-1)}(t)$ satisfies~\eqref{RR1}, it follows from~\eqref{a2}, \eqref{b3}, \eqref{b33}-\eqref{B34}, \eqref{BT20}, \eqref{c17} and~\eqref{c33}
\begin{align}
&|{\gamma^{(n-1)}}''(t)-{\gamma^{(n-1)}}''(t+\delta)|\notag\\
=&|F_{t}(t)-F_{t}(t+\delta)|\notag\\
<&(C\epsilon+C\kappa)\epsilon\delta+(1+C\epsilon)(1+C\kappa)\frac{1}{2}u_{l,*}(0)\frac{1+\alpha}{2}C_{1}\Omega(\delta)
+(1+C\epsilon)(1+C\kappa)\frac{1}{2}\frac{u_{l,*}(0)}{\rho_{l,*}(0)}\Omega(\delta)\notag\\
&+(1+C\epsilon) \Omega(\delta)\notag\\
\leq&(1+C\epsilon)(1+C\kappa)(\frac{1}{2}u_{l,*}(0)\frac{1+\alpha}{2}C_{1}+\frac{1}{2}\frac{u_{l,*}(0)}{\rho_{l,*}(0)}+1)\Omega(\delta)
,\label{c77}
\end{align}
where
\begin{align*}
F_{t}(t)\mathop{=}\limits^{\triangle}&
\frac{\partial F}{\partial x}(\gamma^{(n-1)}(t),U^{(n-1)}(t,\gamma^{(n-1)}(t)),\bar{\rho}_{l}^{(T)}(t,\gamma^{(n-1)}(t)),\bar{u}_{l}^{(T)}(t,\gamma^{(n-1)}(t))){\gamma^{(n-1)}}'(t)\notag\\
&+\frac{\partial F}{\partial U}(\gamma^{(n-1)}(t),U^{(n-1)}(t,\gamma^{(n-1)}(t)),\bar{\rho}_{l}^{(T)}(t,\gamma^{(n-1)}(t)),\bar{u}_{l}^{(T)}(t,\gamma^{(n-1)}(t)))\notag\\
&\cdot\Big(\frac{\partial U^{(n-1)}}{\partial t}(t,\gamma^{(n-1)}(t))
+\frac{\partial U^{(n-1)}}{\partial x}(t,\gamma^{(n-1)}(t)){\gamma^{(n-1)}}'(t)\Big)\notag\\
&+\frac{\partial F}{\partial \bar{\rho}_{l}^{(T)}}(\gamma^{(n-1)}(t),U^{(n-1)}(t,\gamma^{(n-1)}(t)),\bar{\rho}_{l}^{(T)}(t,\gamma^{(n-1)}(t)),\bar{u}_{l}^{(T)}(t,\gamma^{(n-1)}(t)))\notag\\
&\cdot\Big(\frac{\partial \bar{\rho}_{l}^{(T)}}{\partial t}(t,\gamma^{(n-1)}(t))
+\frac{\partial \bar{\rho}_{l}^{(T)}}{\partial x}(t,\gamma^{(n-1)}(t)){\gamma^{(n-1)}}'(t)\Big)\notag\\
&+\frac{\partial F}{\partial \bar{u}_{l}^{(T)}}(\gamma^{(n-1)}(t),U^{(n-1)}(t,\gamma^{(n-1)}(t)),\bar{\rho}_{l}^{(T)}(t,\gamma^{(n-1)}(t)),\bar{u}_{l}^{(T)}(t,\gamma^{(n-1)}(t)))\notag\\
&\cdot\Big(\frac{\partial \bar{u}_{l}^{(T)}}{\partial t}(t,\gamma^{(n-1)}(t))
+\frac{\partial \bar{u}_{l}^{(T)}}{\partial x}(t,\gamma^{(n-1)}(t)){\gamma^{(n-1)}}'(t)\Big).
\end{align*}
Thus, by~\eqref{c75} and~\eqref{c77}, one has
\begin{align}
&\Big|\frac{1}{\alpha}(G_{t}(t)-G_{t}(t+\delta)\Big|\notag\\
<&(1+C\kappa)(1+C\epsilon)\Big(\frac{1+\alpha}{2\alpha}\frac{(u_{l,*}(0)-1)^{2}}{2u_{l,*}(0)}C_{1}
+\frac{1}{2\alpha}\frac{(u_{l,*}(0)-1)^{2}}{\rho_{l,*}(0)u_{l,*}(0)}\notag\\
&+\frac{1}{\alpha}\frac{(u_{l,*}(0)-1)^{2}}{u_{l,*}^{2}(0)}\Big)
\Omega(\delta)
+(1+C\epsilon+C\kappa)\frac{1}{\alpha}\frac{1}{\rho_{l,*}(0)}\Omega(\delta)\notag\\
&+(1+C\epsilon+C\kappa)\frac{1}{\alpha}\frac{2u_{l,*}(0)-1}{u_{l,*}^{2}(0)}\Omega(\delta)
+C\epsilon\Omega(\delta)+C\epsilon^{2}\delta\notag\\
\leq&(1+C\kappa)(1+C\epsilon)\Big(\frac{1+\alpha}{2\alpha}\frac{(u_{l,*}(0)-1)^{2}}{2u_{l,*}(0)}C_{1}
+\frac{1}{2\alpha}\frac{u_{l,*}^{2}(0)+1}{\rho_{l,*}(0)u_{l,*}(0)}+\frac{1}{\alpha}\Big)\Omega(\delta)
.\label{c78}
\end{align}
Next, we estimate the $C^{1}$ norm of $\sigma^{(n)}$. Applying operator $\frac{\partial}{\partial t}+\sigma^{(n)}\frac{\partial}{\partial x}$ on both sides of~$\eqref{c28}_{1}$, we get
\begin{align}
&\frac{\partial}{\partial t}\Big(\frac{\partial \sigma^{(n)}}{\partial t}+\sigma^{(n)}\frac{\partial \sigma^{(n)}}{\partial x}\Big)
+\lambda_{2}(x,\mathbf{\hat{\Phi}}^{(n-1)}(t,x))\frac{\partial}{\partial x}\Big(\frac{\partial \sigma^{(n)}}{\partial t}+\sigma^{(n)}\frac{\partial \sigma^{(n)}}{\partial x}\Big)\notag\\
=&-\Big(\frac{\partial\lambda_{2}}{\partial\hat{\Phi}_{1}}\frac{\partial\hat{\Phi}_{1}^{(n-1)}}{\partial t}+\frac{\partial\lambda_{2}}{\partial\hat{\Phi}_{2}}\frac{\partial\hat{\Phi}_{2}^{(n-1)}}{\partial t}+\sigma^{(n)}\frac{\partial\lambda_{2}}{\partial\hat{\Phi}_{1}}\frac{\partial\hat{\Phi}_{1}^{(n-1)}}{\partial x}+\sigma^{(n)}\frac{\partial\lambda_{2}}{\partial\hat{\Phi}_{2}}\frac{\partial\hat{\Phi}_{2}^{(n-1)}}{\partial x}+\sigma^{(n)}\frac{\partial\lambda_{2}}{\partial x}\Big)\frac{\partial\sigma^{(n)}}{\partial x},\label{c79}
\end{align}
then integrating it along the characteristic curve~\eqref{c14}, and by~\eqref{c31} and~\eqref{c33}, we obtain
\begin{align}
&|\frac{\partial \sigma^{(n)}}{\partial t}(t,x)+\sigma^{(n)}(t,x)\frac{\partial \sigma^{(n)}}{\partial x}(t,x)|\notag\\
\leq&|\frac{\partial \sigma^{(n)}}{\partial t}(t+s_{2}^{(n)},\gamma^{(n-1)}(t+s_{2}^{(n)}))+\sigma^{(n)}(t+s_{2}^{(n)},\gamma^{(n-1)}(t+s_{2}^{(n)}))\frac{\partial \sigma^{(n)}}{\partial x}(t+s_{2}^{(n)},\gamma^{(n-1)}(t+s_{2}^{(n)}))|\notag\\
&+C\epsilon\|\frac{\partial\sigma^{(n)}}{\partial x}\|\notag\\
\leq&\|{\gamma^{(n-1)}}''\|+C\epsilon\|\frac{\partial\sigma^{(n)}}{\partial x}\|\notag\\
\leq&(1+C\epsilon)(1+C\kappa)\kappa(\frac{1+\alpha}{4}u_{l,*}^{2}(0)C_{1}\epsilon
+\frac{u_{l,*}^{2}(0)}{2\rho_{l,*}(0)}C_{l}\epsilon+u_{l,*}(0)C_{l}\epsilon)\notag\\
&+(1+C\epsilon)(1+C\kappa)(\frac{1+\alpha}{4}u_{l,*}(0)C_{1}\epsilon+\frac{u_{l,*}(0)}{2\rho_{l,*}(0)}C_{l}\epsilon+C_{l}\epsilon)
+C\epsilon\|\frac{\partial\sigma^{(n)}}{\partial x}\|.\label{c81}
\end{align}
Furthermore,
\begin{align}
&\|\frac{\partial \sigma^{(n)}}{\partial t}\|-C\epsilon\|\frac{\partial \sigma^{(n)}}{\partial x}\|\notag\\
\leq&(1+C\epsilon)(1+C\kappa)\kappa(\frac{1+\alpha}{4}u_{l,*}^{2}(0)C_{1}\epsilon
+\frac{u_{l,*}^{2}(0)}{2\rho_{l,*}(0)}C_{l}\epsilon+u_{l,*}(0)C_{l}\epsilon)\notag\\
&+(1+C\epsilon)(1+C\kappa)(\frac{1+\alpha}{4}u_{l,*}(0)C_{1}\epsilon+\frac{u_{l,*}(0)}{2\rho_{l,*}(0)}C_{l}\epsilon+C_{l}\epsilon)
+C\epsilon\|\frac{\partial\sigma^{(n)}}{\partial x}\|.\label{c82}
\end{align}
With the aid of~$\eqref{c28}_{1}$, one has
\begin{align*}
\frac{\partial \sigma^{(n)}}{\partial x}=-\frac{1}{\lambda_{2}(x,\mathbf{\hat{\Phi}}^{(n-1)}(t,x))}\frac{\partial \sigma^{(n)}}{\partial t},
\end{align*}
and then
\begin{align}
\|\frac{\partial \sigma^{(n)}}{\partial x}\|\leq\mu_{\max}\|\frac{\partial \sigma^{(n)}}{\partial t}\|.\label{c83}
\end{align}
The combination of~\eqref{c82} and~\eqref{c83} leads to
\begin{align}
&\|\frac{\partial \sigma^{(n)}}{\partial t}\|< C_{F,1}\epsilon,\label{c84}\\
&\|\frac{\partial \sigma^{(n)}}{\partial x}\|< \mu_{\max}C_{F,1}\epsilon.\label{c85}
\end{align}
Now, we start to calculate the continuous modulus estimate of the first derivative of $\hat{\Phi}_{2}^{(n)}$.
We multiply both sides of~$\eqref{b45}_{2}$ by $\frac{1}{\lambda_{2}(x,\mathbf{\hat{\Phi}}^{(n-1)})}$ simultaneously to obtain
\begin{align}
&\frac{\partial\hat{\Phi}_{2}^{(n)}}{\partial x}+\frac{1}{\lambda_{2}(x,\mathbf{\hat{\Phi}}^{(n-1)})}\frac{\partial\hat{\Phi}_{2}^{(n)}}{\partial t}\notag\\
=&-\frac{1}{\alpha}\frac{a'(x)}{a(x)}\frac{1}{\Upsilon_{1,*}+\Upsilon_{2,*}+2}\frac{1}{\lambda_{2}(x,\mathbf{\hat{\Phi}}^{(n-1)})}
\hat{\Phi}_{1}^{(n-1)}-\frac{a'(x)}{a(x)}\frac{1}{\Upsilon_{1,*}+\Upsilon_{2,*}+2}
\frac{1}{\lambda_{2}(x,\mathbf{\hat{\Phi}}^{(n-1)})}\hat{\Phi}_{2}^{(n-1)}.\label{c86}
\end{align}
Define $\eta_{4}^{(n)}(y;t,x)$ as the characteristic curve of $\hat{\Phi}_{2}^{(n)}$ satisfying
\begin{align}\label{c87}
\left\{
\begin{aligned}
&\frac{d\eta_{4}^{(n)}}{dy}(y;t,x)=\frac{1}{\lambda_{2}(x+y,\mathbf{\hat{\Phi}}^{(n-1)}(\eta_{4}^{(n)}(y;t,x),x+y))},\\
&\eta_{4}^{(n)}(0;t,x)=t,
\end{aligned}
\right.
\end{align}
and take $y_{1}^{(n)}(t,x)$ and $y_{2}^{(n)}(t,x)$ satisfying respectively
\begin{align*}
&\gamma^{(n-1)}(\eta_{4}^{(n)}(y_{1}^{(n)};t,x))=x+y_{1}^{(n)},\\
&\gamma^{(n-1)}(\eta_{4}^{(n)}(y_{2}^{(n)};t+\delta,x))=x+y_{2}^{(n)}.
\end{align*}
By the definition of $y_{1}^{(n)}$ and $y_{2}^{(n)}$, we get
\begin{align*}
|y_{1}^{(n)}-y_{2}^{(n)}|=&|\gamma^{(n-1)}(\eta_{4}^{(n)}(y_{1}^{(n)};t,x))-\gamma^{(n-1)}(\eta_{4}^{(n)}(y_{2}^{(n)};t+\delta,x))|\\
\leq&\|{\gamma^{(n-1)}}'\||\eta_{4}^{(n)}(y_{1}^{(n)};t,x)-\eta_{4}^{(n)}(y_{2}^{(n)};t+\delta,x)|\\
\leq&\|{\gamma^{(n-1)}}'\|\mu_{\max}|y_{1}^{(n)}-y_{2}^{(n)}|+\|{\gamma^{(n-1)}}'\|(1+C\epsilon)\delta,
\end{align*}
and then by the Gronwall inequality, we have
\begin{align}
|y_{1}^{(n)}-y_{2}^{(n)}|\leq C\epsilon\delta.\label{c88}
\end{align}
By simultaneously applying operator $\frac{\partial}{\partial t}+\sigma^{(n)}\frac{\partial}{\partial x}$ on both sides of~\eqref{c86}, we obtain
\begin{align}
&\frac{\partial}{\partial x}(\frac{\partial\hat{\Phi}^{(n)}_{2}}{\partial t}+\sigma^{(n)}\frac{\partial\hat{\Phi}^{(n)}_{2}}{\partial x})+\frac{1}{\lambda_{2}}\frac{\partial}{\partial t}(\frac{\partial\hat{\Phi}^{(n)}_{2}}{\partial t}+\sigma^{(n)}\frac{\partial\hat{\Phi}^{(n)}_{2}}{\partial x})\notag\\
=&-\Big(\frac{\partial\frac{1}{\lambda_{2}}}{\partial\hat{\Phi}_{1}}\frac{\partial\hat{\Phi}^{(n-1)}_{1}}{\partial t}+\frac{\partial\frac{1}{\lambda_{2}}}{\partial\hat{\Phi}_{2}}\frac{\partial\hat{\Phi}^{(n-1)}_{2}}{\partial t}+\sigma^{(n)}\frac{\partial\frac{1}{\lambda_{2}}}{\partial\hat{\Phi}_{1}}\frac{\partial\hat{\Phi}^{(n-1)}_{1}}{\partial x}+\sigma^{(n)}\frac{\partial\frac{1}{\lambda_{2}}}{\partial\hat{\Phi}_{2}}\frac{\partial\hat{\Phi}^{(n-1)}_{2}}{\partial x}+\sigma^{(n)}\frac{\partial\frac{1}{\lambda_{2}}}{\partial x}\Big)\frac{\partial\hat{\Phi}^{(n)}_{2}}{\partial t}\notag\\
&-\frac{1}{\alpha}\frac{a'(x)}{a(x)}\frac{1}{\Upsilon_{1,*}+\Upsilon_{2,*}+2}\Big(\frac{\partial\frac{1}{\lambda_{2}}}
{\partial\hat{\Phi}_{1}}\frac{\partial\hat{\Phi}^{(n-1)}_{1}}{\partial t}+\frac{\partial\frac{1}{\lambda_{2}}}{\partial\hat{\Phi}_{2}}\frac{\partial\hat{\Phi}^{(n-1)}_{2}}{\partial t}\Big)\hat{\Phi}_{1}^{(n-1)}\notag\\
&-\frac{1}{\alpha}\frac{a'(x)}{a(x)}\frac{1}{\Upsilon_{1,*}+\Upsilon_{2,*}+2}\frac{1}{\lambda_{2}}
\frac{\partial\hat{\Phi}_{1}^{(n-1)}}{\partial t}
-\sigma^{(n)}\frac{1}{\alpha}(\frac{a'(x)}{a(x)})'\frac{1}{\Upsilon_{1,*}+\Upsilon_{2,*}+2}\frac{1}{\lambda_{2}}
\hat{\Phi}_{1}^{(n-1)}\notag\\
&-\sigma^{(n)}\frac{1}{\alpha}\frac{a'(x)}{a(x)}(-\frac{\Upsilon'_{1,*}+\Upsilon'_{2,*}}
{(\Upsilon_{1,*}+\Upsilon_{2,*}+2)^{2}})\frac{1}{\lambda_{2}}\hat{\Phi}_{1}^{(n-1)}
-\sigma^{(n)}\frac{1}{\alpha}\frac{a'(x)}{a(x)}
\frac{1}{\Upsilon_{1,*}+\Upsilon_{2,*}+2}\Big(\frac{\partial\frac{1}{\lambda_{2}}}{\partial\hat{\Phi}_{1}}
\frac{\partial\hat{\Phi}^{(n-1)}_{1}}{\partial x}\notag\\
&+\frac{\partial\frac{1}{\lambda_{2}}}{\partial\hat{\Phi}_{2}}\frac{\partial\hat{\Phi}^{(n-1)}_{2}}{\partial x}+\frac{\partial\frac{1}{\lambda_{2}}}{\partial x}\Big)\hat{\Phi}_{1}^{(n-1)}
-\sigma^{(n)}\frac{1}{\alpha}\frac{a'(x)}{a(x)}
\frac{1}{\Upsilon_{1,*}+\Upsilon_{2,*}+2}\frac{1}{\lambda_{2}}\frac{\partial\hat{\Phi}_{1}^{(n-1)}}{\partial x}\notag\\
&-\frac{a'(x)}{a(x)}\frac{1}{\Upsilon_{1,*}+\Upsilon_{2,*}+2}\Big(\frac{\partial\frac{1}{\lambda_{2}}}
{\partial\hat{\Phi}_{1}}\frac{\partial\hat{\Phi}^{(n-1)}_{1}}{\partial t}+\frac{\partial\frac{1}{\lambda_{2}}}{\partial\hat{\Phi}_{2}}\frac{\partial\hat{\Phi}^{(n-1)}_{2}}{\partial t}\Big)\hat{\Phi}_{2}^{(n-1)}\notag\\
&-\frac{a'(x)}{a(x)}\frac{1}{\Upsilon_{1,*}+\Upsilon_{2,*}+2}\frac{1}{\lambda_{2}}
\frac{\partial\hat{\Phi}_{2}^{(n-1)}}{\partial t}-\sigma^{(n)}(\frac{a'(x)}{a(x)})'\frac{1}{\Upsilon_{1,*}+\Upsilon_{2,*}+2}\frac{1}{\lambda_{2}}
\hat{\Phi}_{2}^{(n-1)}\notag\\
&-\sigma^{(n)}\frac{a'(x)}{a(x)}(-\frac{\Upsilon'_{1,*}+\Upsilon'_{2,*}}
{(\Upsilon_{1,*}+\Upsilon_{2,*}+2)^{2}})\frac{1}{\lambda_{2}}\hat{\Phi}_{2}^{(n-1)}-\sigma^{(n)}\frac{a'(x)}{a(x)}
\frac{1}{\Upsilon_{1,*}+\Upsilon_{2,*}+2}\frac{1}{\lambda_{2}}\frac{\partial\hat{\Phi}_{2}^{(n-1)}}{\partial x}\notag\\
&-\sigma^{(n)}\frac{a'(x)}{a(x)}\frac{1}{\Upsilon_{1,*}+\Upsilon_{2,*}+2}\Big(\frac{\partial\frac{1}
{\lambda_{2}}}{\partial\hat{\Phi}_{1}}
\frac{\partial\hat{\Phi}^{(n-1)}_{1}}{\partial x}+\frac{\partial\frac{1}{\lambda_{2}}}{\partial\hat{\Phi}_{2}}\frac{\partial\hat{\Phi}^{(n-1)}_{2}}{\partial x}+\frac{\partial\frac{1}{\lambda_{2}}}{\partial x}\Big)\hat{\Phi}_{2}^{(n-1)}\label{c89}\\
\mathop{=}\limits^{\triangle}&\Pi,\notag
\end{align}
then integrating it along the characteristic curve~\eqref{c87}, we have
\begin{align}
&\frac{\partial\hat{\Phi}^{(n)}_{2}}{\partial t}(t,x)+\sigma^{(n)}(t,x)\frac{\partial\hat{\Phi}^{(n)}_{2}}{\partial x}(t,x)-\frac{\partial\hat{\Phi}^{(n)}_{2}}{\partial t}(t+\delta,x)-\sigma^{(n)}(t+\delta,x)\frac{\partial\hat{\Phi}^{(n)}_{2}}{\partial x}(t+\delta,x)\notag\\
=&\frac{\partial\hat{\Phi}^{(n)}_{2}}{\partial t}(\eta_{4}^{(n)}(y_{1}^{(n)};t,x),x+y_{1}^{(n)})+\sigma^{(n)}(\eta_{4}^{(n)}(y_{1}^{(n)};t,x),x+y_{1}^{(n)})
\frac{\partial\hat{\Phi}^{(n)}_{2}}{\partial x}(\eta_{4}^{(n)}(y_{1}^{(n)};t,x),x+y_{1}^{(n)})\notag\\
&-\frac{\partial\hat{\Phi}^{(n)}_{2}}{\partial t}(\eta_{4}^{(n)}(y_{2}^{(n)};t+\delta,x),x+y_{2}^{(n)})-\sigma^{(n)}(\eta_{4}^{(n)}(y_{2}^{(n)};t+\delta,x),x+y_{2}^{(n)})\notag\\
&\cdot\frac{\partial\hat{\Phi}^{(n)}_{2}}{\partial x}(\eta_{4}^{(n)}(y_{2}^{(n)};t+\delta,x),x+y_{2}^{(n)})\notag\\
&-\int_{0}^{y_{1}^{(n)}}\Pi(\eta_{4}^{(n)}(y;t,x),x+y)dy+\int_{0}^{y_{1}^{(n)}}\Pi(\eta_{4}^{(n)}(y;t+\delta,x),x+y)dy\notag\\
&+\int_{y_{1}^{(n)}}^{y_{2}^{(n)}}\Pi(\eta_{4}^{(n)}(y;t+\delta,x),x+y)dy.\label{c90}
\end{align}
By~\eqref{a2}, \eqref{c31},\eqref{c78}, \eqref{c84} and~\eqref{c88}, we get
\begin{align*}
&\Big|\frac{\partial\hat{\Phi}^{(n)}_{2}}{\partial t}(t,x)+\sigma^{(n)}(t,x)\frac{\partial\hat{\Phi}^{(n)}_{2}}{\partial x}(t,x)-\frac{\partial\hat{\Phi}^{(n)}_{2}}{\partial t}(t+\delta,x)-\sigma^{(n)}(t+\delta,x)\frac{\partial\hat{\Phi}^{(n)}_{2}}{\partial x}(t+\delta,x)\Big|\notag\\
\leq&\Big|\frac{\partial\hat{\Phi}^{(n)}_{2}}{\partial t}(\eta_{4}^{(n)}(y_{1}^{(n)};t,x),x+y_{1}^{(n)})+\sigma^{(n)}(\eta_{4}^{(n)}(y_{1}^{(n)};t,x),x+y_{1}^{(n)})
\frac{\partial\hat{\Phi}^{(n)}_{2}}{\partial x}(\eta_{4}^{(n)}(y_{1}^{(n)};t,x),x+y_{1}^{(n)})\notag\\
&-\frac{\partial\hat{\Phi}^{(n)}_{2}}{\partial t}(\eta_{4}^{(n)}(y_{2}^{(n)};t+\delta,x),x+y_{2}^{(n)})-\sigma^{(n)}(\eta_{4}^{(n)}(y_{2}^{(n)};t+\delta,x),x+y_{2}^{(n)})\notag\\
&\cdot\frac{\partial\hat{\Phi}^{(n)}_{2}}{\partial x}(\eta_{4}^{(n)}(y_{2}^{(n)};t+\delta,x),x+y_{2}^{(n)})\Big|\notag\\
&+C\epsilon\varpi\Big(\delta\Big|\frac{\partial\hat{\Phi}^{(n)}_{2}}{\partial t}(\cdot,x)\Big)+C\epsilon\delta+C\epsilon\Omega(\delta)+C\kappa\delta+C\kappa\Omega(\delta)\notag\\
\leq&\Big|\frac{1}{\alpha}\Big(G_{t}(\eta_{4}^{(n)}(y_{1}^{(n)};t,x))-G_{t}(\eta_{4}^{(n)}(y_{2}^{(n)};t+\delta,x))\Big)\Big|
+C\epsilon\varpi\Big(\delta\Big|\frac{\partial\hat{\Phi}^{(n)}_{2}}{\partial t}(\cdot,x)\Big)\notag\\
&+C\epsilon\delta+C\epsilon\Omega(\delta)+C\kappa\delta+C\kappa\Omega(\delta)\notag\\
\leq&(1+C\kappa)(1+C\epsilon)\Big(\frac{1+\alpha}{2\alpha}\frac{(u_{l,*}(0)-1)^{2}}{2u_{l,*}(0)}C_{1}
+\frac{1}{2\alpha}\frac{u^{2}_{l,*}(0)+1}{\rho_{l,*}(0)u_{l,*}(0)}+\frac{1}{\alpha}\Big)\Omega(\delta)\notag\\
&+C\epsilon\varpi\Big(\delta\Big|\frac{\partial\hat{\Phi}^{(n)}_{2}}{\partial t}(\cdot,x)\Big)
+C\epsilon\delta+C\epsilon\Omega(\delta)+C\kappa\delta+C\kappa\Omega(\delta),
\end{align*}
and then, by~\eqref{c31} and~\eqref{c84}, we have
\begin{align}
&\Big|\frac{\partial\hat{\Phi}^{(n)}_{2}}{\partial t}(t,x)-\frac{\partial\hat{\Phi}^{(n)}_{2}}{\partial t}(t+\delta,x)\Big|\notag\\
\leq&\Big|\sigma^{(n)}(t,x)\frac{\partial\hat{\Phi}^{(n)}_{2}}{\partial x}(t,x)-\sigma^{(n)}(t+\delta,x)\frac{\partial\hat{\Phi}^{(n)}_{2}}{\partial x}(t+\delta,x)\Big|\notag\\
&+(1+C\kappa)(1+C\epsilon)\Big(\frac{1+\alpha}{2\alpha}\frac{(u_{l,*}(0)-1)^{2}}{2u_{l,*}(0)}C_{1}
+\frac{1}{2\alpha}\frac{u^{2}_{l,*}(0)+1}{\rho_{l,*}(0)u_{l,*}(0)}
+\frac{1}{\alpha}\Big)\Omega(\delta)\notag\\
&+C\epsilon\varpi\Big(\delta\Big|\frac{\partial\hat{\Phi}^{(n)}_{2}}{\partial t}(\cdot,x)\Big)
+C\epsilon\delta+C\epsilon\Omega(\delta)+C\kappa\delta+C\kappa\Omega(\delta)\notag\\
\leq&(1+C\kappa)(1+C\epsilon)\Big(\frac{1+\alpha}{2\alpha}\frac{(u_{l,*}(0)-1)^{2}}{2u_{l,*}(0)}C_{1}
+\frac{1}{2\alpha}\frac{u^{2}_{l,*}(0)+1}{\rho_{l,*}(0)u_{l,*}(0)}
+\frac{1}{\alpha}\Big)\Omega(\delta)\notag\\
&+C\epsilon\varpi\Big(\delta\Big|\frac{\partial\hat{\Phi}^{(n)}_{2}}{\partial x}(\cdot,x)\Big)
+C\epsilon\varpi\Big(\delta\Big|\frac{\partial\hat{\Phi}^{(n)}_{2}}{\partial t}(\cdot,x)\Big)
+C\epsilon\delta+C\epsilon\Omega(\delta)+C\kappa\delta+C\kappa\Omega(\delta)
.\label{c91}
\end{align}
Then, by the arbitrariness of $\delta$, we get
\begin{align}
&\varpi\Big(\delta\Big|\frac{\partial\hat{\Phi}^{(n)}_{2}}{\partial t}(\cdot,x)\Big)\notag\\
\leq&(1+C\kappa)(1+C\epsilon)\Big(\frac{1+\alpha}{2\alpha}\frac{(u_{l,*}(0)-1)^{2}}{2u_{l,*}(0)}C_{1}
+\frac{1}{2\alpha}\frac{u^{2}_{l,*}(0)+1}{\rho_{l,*}(0)u_{l,*}(0)}
+\frac{1}{\alpha}\Big)\Omega(\delta)\notag\\
&+C\epsilon\varpi\Big(\delta\Big|\frac{\partial\hat{\Phi}^{(n)}_{2}}{\partial x}(\cdot,x)\Big)
+C\epsilon\varpi\Big(\delta\Big|\frac{\partial\hat{\Phi}^{(n)}_{2}}{\partial t}(\cdot,x)\Big)
+C\epsilon\delta+C\epsilon\Omega(\delta)+C\kappa\delta+C\kappa\Omega(\delta).\label{c92}
\end{align}
By~\eqref{a2}, \eqref{c4} and~\eqref{c86}, we get
\begin{align}
&\Big|\frac{\partial\hat{\Phi}^{(n)}_{2}}{\partial x}(t,x)-\frac{\partial\hat{\Phi}^{(n)}_{2}}{\partial x}(t+\delta,x)\Big|\notag\\
\leq&\Big|\frac{1}{\lambda_{2}(x,\mathbf{\hat{\Phi}}^{(n-1)}(t,x))}\frac{\partial\hat{\Phi}^{(n)}_{2}}{\partial t}(t,x)
-\frac{1}{\lambda_{2}(x,\mathbf{\hat{\Phi}}^{(n-1)}(t+\delta,x))}\frac{\partial\hat{\Phi}^{(n)}_{2}}{\partial t}(t+\delta,x)\Big|\notag\\
&+\Big|-\frac{1}{\alpha}\frac{a'(x)}{a(x)}\frac{1}{\Upsilon_{1,*}+\Upsilon_{2,*}+2}
\frac{1}{\lambda_{2}(x,\mathbf{\hat{\Phi}}^{(n-1)}(t,x))}
\hat{\Phi}_{1}^{(n-1)}(t,x)\notag\\
&-\frac{a'(x)}{a(x)}\frac{1}{\Upsilon_{1,*}+\Upsilon_{2,*}+2}
\frac{1}{\lambda_{2}(x,\mathbf{\hat{\Phi}}^{(n-1)}(t,x))}\hat{\Phi}_{2}^{(n-1)}(t,x)\notag\\
&+\frac{1}{\alpha}\frac{a'(x)}{a(x)}\frac{1}{\Upsilon_{1,*}+\Upsilon_{2,*}+2}
\frac{1}{\lambda_{2}(x,\mathbf{\hat{\Phi}}^{(n-1)}(t+\delta,x))}
\hat{\Phi}_{1}^{(n-1)}(t+\delta,x)\notag\\
&+\frac{a'(x)}{a(x)}\frac{1}{\Upsilon_{1,*}+\Upsilon_{2,*}+2}
\frac{1}{\lambda_{2}(x,\mathbf{\hat{\Phi}}^{(n-1)}(t+\delta,x))}\hat{\Phi}_{2}^{(n-1)}(t+\delta,x)\Big|\notag\\
\leq&\mu_{\max}\varpi\Big(\delta\Big|\frac{\partial\hat{\Phi}^{(n)}_{2}}{\partial t}(\cdot,x)\Big)+C\epsilon\delta+C\kappa\epsilon\delta.\label{c93}
\end{align}
Thus,
\begin{align}
\varpi\Big(\delta\Big|\frac{\partial\hat{\Phi}^{(n)}_{2}}{\partial x}(\cdot,x)\Big)
\leq\mu_{\max}\varpi\Big(\delta\Big|\frac{\partial\hat{\Phi}^{(n)}_{2}}{\partial t}(\cdot,x)\Big)+C\epsilon\delta+C\kappa\epsilon\delta.\label{c94}
\end{align}
The combination of~\eqref{c92} and~\eqref{c94} leads
\begin{align}
&\varpi\Big(\delta\Big|\frac{\partial\hat{\Phi}^{(n)}_{2}}{\partial t}(\cdot,x)\Big)\notag\\
\leq&(1+C\kappa)(1+C\epsilon)\Big(\frac{1+\alpha}{2\alpha}\frac{(u_{l,*}(0)-1)^{2}}{2u_{l,*}(0)}C_{1}
+\frac{1}{2\alpha}\frac{u^{2}_{l,*}(0)+1}{\rho_{l,*}(0)u_{l,*}(0)}
+\frac{1}{\alpha}\Big)\Omega(\delta)\notag\\
&+C\epsilon\delta+C\epsilon\Omega(\delta)+C\kappa\delta+C\kappa\Omega(\delta)\notag\\
<&C_{1}\Omega(\delta),\label{c95}
\end{align}
\begin{align}
\varpi\Big(\delta\Big|\frac{\partial\hat{\Phi}^{(n)}_{2}}{\partial x}(\cdot,x)\Big)
<&\mu_{\max}C_{1}\Omega(\delta)+C\epsilon\delta+C\kappa\epsilon\delta\notag\\
<&C_{2}\Omega(\delta).\label{c96}
\end{align}
Now we prove~\eqref{c61}-\eqref{c62}. Letting $s_{4}^{(n)}$ satisfying $s_{4}^{(n)}=\eta_{4}^{(n)}(\delta;t,x)$. By~\eqref{a2}, \eqref{b3}, \eqref{c4}, \eqref{c86}-\eqref{c87} and~\eqref{c95}, we get
\begin{align}
&\Big|\frac{\partial\hat{\Phi}^{(n)}_{2}}{\partial t}(t,x)-\frac{\partial\hat{\Phi}^{(n)}_{2}}{\partial t}(t,x+\delta)\Big|\notag\\
\leq&\Big|\frac{\partial\hat{\Phi}^{(n)}_{2}}{\partial t}(t,x)-\frac{\partial\hat{\Phi}^{(n)}_{2}}{\partial t}(s_{4}^{(n)},x+\delta)\Big|+\Big|\frac{\partial\hat{\Phi}^{(n)}_{2}}{\partial t}(s_{4}^{(n)},x+\delta)-\frac{\partial\hat{\Phi}^{(n)}_{2}}{\partial t}(t,x+\delta)\Big|\notag\\
<&\mu_{\max}C_{1}\Omega(\delta)+\Big|\int_{0}^{\delta}\Big(-(\frac{\partial\frac{1}{\lambda_{2}}}
{\partial\hat{\Phi}_{1}}\frac{\partial\hat{\Phi}^{(n-1)}_{1}}{\partial t}+\frac{\partial\frac{1}{\lambda_{2}}}{\partial\hat{\Phi}_{2}}\frac{\partial\hat{\Phi}^{(n-1)}_{2}}{\partial t})\frac{\partial\hat{\Phi}^{(n)}_{2}}{\partial t}\notag\\
&-\frac{1}{\alpha}\frac{a'}{a}\frac{1}{\Upsilon_{1,*}+\Upsilon_{2,*}+2}(\frac{\partial\frac{1}{\lambda_{2}}}
{\partial\hat{\Phi}_{1}}\frac{\partial\hat{\Phi}^{(n-1)}_{1}}{\partial t}+\frac{\partial\frac{1}{\lambda_{2}}}{\partial\hat{\Phi}_{2}}\frac{\partial\hat{\Phi}^{(n-1)}_{2}}{\partial t})\hat{\Phi}_{1}^{(n-1)}\notag\\
&-\frac{1}{\alpha}\frac{a'}{a}\frac{1}{\Upsilon_{1,*}+\Upsilon_{2,*}+2}\frac{1}{\lambda_{2}}
\frac{\partial\hat{\Phi}_{1}^{(n-1)}}{\partial t}\notag\\
&-\frac{a'}{a}\frac{1}{\Upsilon_{1,*}+\Upsilon_{2,*}+2}(\frac{\partial\frac{1}{\lambda_{2}}}
{\partial\hat{\Phi}_{1}}\frac{\partial\hat{\Phi}^{(n-1)}_{1}}{\partial t}+\frac{\partial\frac{1}{\lambda_{2}}}{\partial\hat{\Phi}_{2}}\frac{\partial\hat{\Phi}^{(n-1)}_{2}}{\partial t})\hat{\Phi}_{2}^{(n-1)}\notag\\
&-\frac{a'}{a}\frac{1}{\Upsilon_{1,*}+\Upsilon_{2,*}+2}\frac{1}{\lambda_{2}}
\frac{\partial\hat{\Phi}_{2}^{(n-1)}}{\partial t}\Big)(\eta_{4}^{(n)}(y;t,x),x+y)dy\Big|\notag\\
\leq&\mu_{\max}C_{1}\Omega(\delta)+C\epsilon^{2}\delta+C\kappa\epsilon\delta\notag\\
<&C_{2}\Omega(\delta).\label{c97}
\end{align}
With the aid of~\eqref{c86}, we have
\begin{align}
&\frac{\partial\hat{\Phi}^{(n)}_{2}}{\partial x}(t,x)-\frac{\partial\hat{\Phi}^{(n)}_{2}}{\partial x}(t,x+\delta)\notag\\
=&-\frac{1}{\lambda_{2}(x,\mathbf{\hat{\Phi}}^{(n-1)}(t,x))}\Big(\frac{\partial\hat{\Phi}^{(n)}_{2}}{\partial t}(t,x)-\frac{\partial\hat{\Phi}^{(n)}_{2}}{\partial t}(t,x+\delta)\Big)\notag\\
&-\Big(\frac{1}{\lambda_{2}(x,\mathbf{\hat{\Phi}}^{(n-1)}(t,x))}-\frac{1}{\lambda_{2}(x+\delta,\mathbf{\hat{\Phi}}^{(n-1)}(t,x+\delta))}\Big)
\frac{\partial\hat{\Phi}^{(n)}_{2}}{\partial t}(t,x+\delta)\notag\\
&-\frac{1}{\alpha}\frac{a'(x)}{a(x)}\frac{1}{\Upsilon_{1,*}+\Upsilon_{2,*}+2}
\frac{1}{\lambda_{2}(x,\mathbf{\hat{\Phi}}^{(n-1)}(t,x))}\hat{\Phi}_{1}^{(n-1)}(t,x)\notag\\
&-\frac{a'(x)}{a(x)}\frac{1}{\Upsilon_{1,*}
+\Upsilon_{2,*}+2}\frac{1}{\lambda_{2}(x,\mathbf{\hat{\Phi}}^{(n-1)}(t,x))}\hat{\Phi}_{2}^{(n-1)}(t,x)\notag\\
&+\frac{1}{\alpha}\frac{a'(x+\delta)}{a(x+\delta)}\frac{1}{\Upsilon_{1,*}+\Upsilon_{2,*}+2}
\frac{1}{\lambda_{2}(x+\delta,\mathbf{\hat{\Phi}}^{(n-1)}(t,x+\delta))}\hat{\Phi}_{1}^{(n-1)}(t,x+\delta)\notag\\
&+\frac{a'(x+\delta)}{a(x+\delta)}\frac{1}{\Upsilon_{1,*}
+\Upsilon_{2,*}+2}\frac{1}{\lambda_{2}(x+\delta,\mathbf{\hat{\Phi}}^{(n-1)}(t,x+\delta))}\hat{\Phi}_{2}^{(n-1)}(t,x+\delta).\label{c98}
\end{align}
By~\eqref{a2}, \eqref{c4} and~\eqref{c97}, one has
\begin{align}
&\Big|\frac{\partial\hat{\Phi}^{(n)}_{2}}{\partial x}(t,x)-\frac{\partial\hat{\Phi}^{(n)}_{2}}{\partial x}(t,x+\delta)\Big|\notag\\
\leq&\mu_{\max}\Big|\frac{\partial\hat{\Phi}^{(n)}_{2}}{\partial t}(t,x)-\frac{\partial\hat{\Phi}^{(n)}_{2}}{\partial t}(t,x+\delta)\Big|+C\epsilon\delta+C\kappa\epsilon\delta\notag\\
\leq&\mu_{\max}C_{2}\Omega(\delta)\notag\\
<&C_{3}\Omega(\delta),\label{c99}
\end{align}
where $C_{3}>\mu_{\max}C_{2}$.

Thus, by the arbitrariness of $t,x,\delta$, we get
\begin{align}
&\varpi(\delta|\frac{\partial\hat{\Phi}^{(n)}_{2}}{\partial t}(t,\cdot))<C_{2}\Omega(\delta),\label{c100}\\
&\varpi(\delta|\frac{\partial\hat{\Phi}^{(n)}_{2}}{\partial x}(t,\cdot))<C_{3}\Omega(\delta),\label{c101}
\end{align}
which indicate we finish the proof of~\eqref{c59}-\eqref{c62}.

Finally, for $i=1,2$, by~\eqref{c59} and~\eqref{c61}, we get
\begin{align}
&\Big|\frac{\partial\hat{\Phi}^{(n)}_{i}}{\partial t}(t,x)-\frac{\partial\hat{\Phi}^{(n)}_{i}}{\partial t}(t+\delta,x+\delta)\Big|\notag\\
\leq&\Big|\frac{\partial\hat{\Phi}^{(n)}_{i}}{\partial t}(t,x)-\frac{\partial\hat{\Phi}^{(n)}_{i}}{\partial t}(t+\delta,x)\Big|\notag\\
&+\Big|\frac{\partial\hat{\Phi}^{(n)}_{i}}{\partial t}(t+\delta,x)-\frac{\partial\hat{\Phi}^{(n)}_{i}}{\partial t}(t+\delta,x+\delta)\Big|\notag\\
\leq& \varpi(\delta|\frac{\partial\hat{\Phi}^{(n)}_{i}}{\partial t}(\cdot,x))+\varpi(\delta|\frac{\partial\hat{\Phi}^{(n)}_{i}}{\partial t}(t+\delta,\cdot))\notag\\
<&\big(C_{1}+C_{2}\big)\Omega(\delta).\label{c102}
\end{align}
Furthermore,
\begin{align}
\varpi(\delta|\frac{\partial\hat{\Phi}^{(n)}_{i}}{\partial t}(\cdot,\cdot))<\big(C_{1}+C_{2}\big)\Omega(\delta).\label{c103}
\end{align}
Similarly, we also get
\begin{align}
\varpi(\delta|\frac{\partial\hat{\Phi}^{(n)}_{i}}{\partial x}(\cdot,\cdot))<\big(C_{2}+C_{3}\big)\Omega(\delta).\label{c104}
\end{align}
Thus, we proved~\eqref{LC1}.
\end{proof}

\begin{theorem}\label{T1}
Under the conditions of~\lemref{L1}-\lemref{L3}, there exists a constant $\epsilon_{1}>0$, such that for any $\epsilon\in(0,\epsilon_{1}]$, there exist $C^{1}$ smooth functions $\hat{\Phi}_{i,0}(x)=\hat{\Phi}_{i,0}^{(T)}(x)~(i=1,2)$ satisfying
\begin{align}
\mathop{\max}\limits_{i=1,2}\|\hat{\Phi}_{i,0}^{(T)}(x)\|_{C^{1}}\leq C_{4}\epsilon \label{CC2}
\end{align}
with the constant $C_{4}=\mathop{\max}\limits_{i=1,2}C_{i}$, such that system~\eqref{b39}-\eqref{b43} exists a time-periodic weak solution $\mathbf{\hat{\Phi}}^{(T)}(t,x)$ containing a transonic shock wave $x=\gamma^{(T)}(t)$.
\end{theorem}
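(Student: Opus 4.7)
The plan is to pass to the limit in the iteration $\{\mathbf{\hat{\Phi}}^{(n)},\gamma^{(n)}\}$ defined by~\eqref{b45}--\eqref{b46} and verify that its limit furnishes a time-periodic weak solution to~\eqref{b39}--\eqref{b43}. First, I would invoke~\lemref{L2}: the geometric decay $\|\mathbf{\hat{\Phi}}^{(n)}-\mathbf{\hat{\Phi}}^{(n-1)}\|\leq C_{1}\epsilon\beta^{n-1}$ with $\beta\in(0,1)$ makes $\{\mathbf{\hat{\Phi}}^{(n)}\}$ Cauchy in $C^{0}$, so it converges uniformly to some $\mathbf{\hat{\Phi}}^{(T)}$. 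The companion bound~\eqref{c52} transfers this contraction to the shock positions, giving a uniform limit $\gamma^{(n)}\to\gamma^{(T)}$ as well.

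Next, I would upgrade to $C^{1}$ convergence by combining the uniform $C^{1}$ bound~\eqref{TT2} from~\lemref{L1} with the continuity-modulus control~\eqref{LC1} from~\lemref{L3}. Together these yield equicontinuity of the families $\{\partial_{t}\hat{\Phi}^{(n)}_{i}\}$ and $\{\partial_{x}\hat{\Phi}^{(n)}_{i}\}$, so Arzel\`a--Ascoli extracts a subsequence whose derivatives converge uniformly; uniqueness of the already-identified $C^{0}$ limit forces the full sequence to converge in $C^{1}$. A parallel argument applied to $\gamma^{(n)}$, using the $C^{2}$ bound~\eqref{TT3}, upgrades $\gamma^{(n)}\to\gamma^{(T)}$ in $C^{1}$ (and produces a $C^{2}$ limit along a subsequence). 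Time-periodicity~\eqref{TT1} is preserved by uniform limits, so $\mathbf{\hat{\Phi}}^{(T)}(t+T,x)=\mathbf{\hat{\Phi}}^{(T)}(t,x)$ and $\gamma^{(T)}(t+T)=\gamma^{(T)}(t)$.

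Third, I would pass $n\to\infty$ in the linearized system~\eqref{b45}--\eqref{b46}. Smoothness of $\lambda_{i}, F, G$, together with the uniform $C^{1}$ convergence of the states and the $C^{1}$ convergence of the shock curve, allow me to recover the full nonlinear system~\eqref{b39}--\eqref{b43} for $(\mathbf{\hat{\Phi}}^{(T)},\gamma^{(T)})$. Setting $\hat{\Phi}^{(T)}_{i,0}(x):=\hat{\Phi}^{(T)}_{i}(0,x)$ and reading~\eqref{CC2} directly off~\eqref{TT2} supplies the claimed initial data. Inverting the transformations~\eqref{b38} and~\eqref{b21} in the subsonic region and gluing with the supersonic time-periodic solution $(\rho_{l}^{(T)},u_{l}^{(T)})$ already obtained across $x=\gamma^{(T)}(t)$ then produces the piecewise smooth, time-periodic transonic weak solution asserted by~\theref{T1}.

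The main technical obstacle I anticipate is passing to the limit in the shock-type boundary condition $\hat{\Phi}^{(n)}_{2}(t,\gamma^{(n-1)}(t))=\alpha^{-1}G(\gamma^{(n-1)}(t),{\gamma^{(n-1)}}'(t),\bar{\rho}_{l}^{(T)},\bar{u}_{l}^{(T)})$, since both the evaluation curve and the state depend on $n$ and the curves at iteration $n$ and iteration $\infty$ do not coincide. To handle it, I plan to split
$$
\hat{\Phi}^{(n)}_{2}(t,\gamma^{(n-1)}(t))-\hat{\Phi}^{(T)}_{2}(t,\gamma^{(T)}(t)) = A_n + B_n,
$$
where $A_n=\hat{\Phi}^{(n)}_{2}(t,\gamma^{(n-1)}(t))-\hat{\Phi}^{(T)}_{2}(t,\gamma^{(n-1)}(t))$ is controlled by uniform $C^{0}$ convergence, and $B_n=\hat{\Phi}^{(T)}_{2}(t,\gamma^{(n-1)}(t))-\hat{\Phi}^{(T)}_{2}(t,\gamma^{(T)}(t))$ is controlled by the equicontinuity of $\hat{\Phi}^{(T)}_{2}$ inherited from~\lemref{L3} together with $\|\gamma^{(n-1)}-\gamma^{(T)}\|\to 0$; the smoothness of $G$ then disposes of the right-hand side. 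The same decomposition handles the limit in~\eqref{b46}. Once this compatibility at the free boundary is verified, the remainder of the convergence argument is essentially bookkeeping.
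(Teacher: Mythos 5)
Your proposal is correct and follows essentially the same route as the paper: combine the $C^{1}$ bounds and periodicity of Lemma~\ref{L1}, the $C^{0}$ contraction of Lemma~\ref{L2} (transferred to $\{\gamma^{(n)}\}$ via~\eqref{c52}--\eqref{c53}), and the equicontinuity of Lemma~\ref{L3}, then use Arzel\`a--Ascoli to pass to the limit in~\eqref{b45}--\eqref{b46} and read the initial data off $\mathbf{\hat{\Phi}}^{(T)}(0,x)$. The only cosmetic difference is that you argue full-sequence $C^{1}$ convergence from uniqueness of the $C^{0}$ limit, whereas the paper extracts nested subsequences so that $\{\mathbf{\hat{\Phi}}^{(m_{k})}\}$, $\{\mathbf{\hat{\Phi}}^{(m_{k}-1)}\}$ and $\{\gamma^{(m_{k}-1)}\}$ converge simultaneously; both are valid.
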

\begin{proof}
According to~\lemref{L1} and~\lemref{L3}, it can be inferred that the sequence $\{\mathbf{\hat{\Phi}}^{(n)}\}$ satisfies its $C^{1}$ boundedness and equicontinuity. It follow from~\lemref{L2} that the $C^{0}$ norm of the sequence $\{\mathbf{\hat{\Phi}}^{(n)}\}$ composes the Cauchy sequence. Moreover, by~\lemref{L1} and~\eqref{c77}, we get that the sequence $\{\gamma^{(n)}\}$ satisfies its $C^{2}$ boundedness and equicontinuity. With the aid of~\eqref{c52}-\eqref{c53}, we know that the $C^{1}$ norm of the sequence $\{\gamma^{(n)}\}$ composes the Cauchy sequence. Therefore, Using $Arzel\grave{a}-Ascoli$ theorem, we deduce that there exists the sequence $\{i_{k}\}$, such that the $C^{1}$ norm of the subsequence $\{\mathbf{\hat{\Phi}}^{(i_{k})}\}$ converges. Moreover, since the subsequence $\{\mathbf{\hat{\Phi}}^{(i_{k})}\}$ still satisfies its $C^{1}$ boundedness and equicontinuity, according to its $C^{0}$ norm convergence, there exists the subsequence $\{j_{k}\}$ of $\{i_{k}\}$, such that the $C^{1}$ norm of both $\{\mathbf{\hat{\Phi}}^{(j_{k})}\}$ and $\{\mathbf{\hat{\Phi}}^{(j_{k}-1)}\}$ converge and they possess same $C^{1}$ limit. In addition, since the subsequence $\{\gamma^{(j_{k}-1)}\}$ still satisfies its $C^{1}$ norm convergence, $C^{2}$ boundedness and equicontinuity, there exists the subsequence $\{m_{k}\}$ of $\{j_{k}\}$, such that the $C^{1}$ norm of both $\{\mathbf{\hat{\Phi}}^{(m_{k})}\}$ and $\{\mathbf{\hat{\Phi}}^{(m_{k}-1)}\}$ converge and the $C^{2}$ norm of the subsequence $\{\gamma^{(m_{k}-1)}\}$ converges. We denote their limit as $\mathbf{\hat{\Phi}}^{(T)}$ and $\gamma^{(T)}$, replace $n$ in~\eqref{b45} with $m_{k}$ and replace $n$ in~\eqref{b46} with $m_{k}-1$, then $\mathbf{\hat{\Phi}}^{(T)}$ and $\gamma^{(T)}$ are the solution of~\eqref{b39}-\eqref{b43}. Then we can take $(\hat{\Phi}_{1,0}^{(T)}(x),\hat{\Phi}_{2,0}^{(T)}(x))^{\top}=\mathbf{\hat{\Phi}}^{(T)}(0,x)$ to get the initial data, which clearly satisfying~\eqref{CC2} due to~\eqref{c3}-\eqref{c6}.

\end{proof}
Using~\theref{T1}, we can directly get~\theref{t1}.

\section{Stability around the time-periodic weak solution}\label{s4}
\indent\indent In this section, we prove the dynamical stability around the time-periodic transonic shock solution $(\rho^{(T)},u^{(T)})(t,x)$ (i.e.~\theref{t2}). Drawing on the work of~\cite{Xin, Rauch}, we obtain that there exists a unique piecewise smooth solution $(\rho,u)(t,x)$ for the initial and boundary value problem~\eqref{a1}-\eqref{ABa2}, which satisfies
\begin{align}
\sum_{m=0}^{2}|\partial_{t}^{m}(\gamma(t)-x^{*})|+\|(\bar{\rho}_{l},\bar{u}_{l})(t,\cdot)
\|_{C^{1}\big([0,\gamma(t))\big)}
&+\|\mathbf{\hat{\Phi}}(t,\cdot)\|_{C^{1}\big((\gamma(t),L]\big)}
<C\epsilon,\label{DD2}
\end{align}
where we used~\eqref{b21},\eqref{PPP1} and $\bar{\rho}_{l}(t,x)=\rho_{l}(t,x)-\rho_{l,*}(x),~\bar{u}_{l}(t,x)=u_{l}(t,x)-u_{l,*}(x)$.
In fact, following the method demonstrated in~\cite{Qup} for the fixed boundary Euler equations, we can easily gain exponential decay rates in the supersonic domain, that is
$$\|(\bar{\rho}_{l},\bar{u}_{l})(t,\cdot)-(\bar{\rho}_{l}^{(T)},
\bar{u}_{l}^{(T)})(t,\cdot)\|_{C^{0}([0,x^{*}+\sqrt{\epsilon}])}\leq C_{L}\epsilon\xi^{l},\quad \text{for} ~t\in[lT_{0},(l+1)T_{0}],~l\in\mathbb{Z}_{+},$$
and thus
\begin{align}
\|(\bar{\rho}_{l},\bar{u}_{l})(t,\cdot)-(\bar{\rho}_{l}^{(T)},\bar{u}_{l}^{(T)})(t,\cdot)\|_{C^{0}([0,\min\{\gamma(t),\gamma^{(T)}(t)\}])}\leq C_{L}\epsilon\xi^{l},\quad \text{for} ~t\in[lT_{0},(l+1)T_{0}],~l\in\mathbb{Z}_{+},\label{d16}
\end{align}
where $C_{L}$ is a positive constant. We omit details here.

Now, we focus on the stability in the subsonic domain. Multiplying $\frac{1}{\lambda_{i}(x,\mathbf{\hat{\Phi}})}$ and $\frac{1}{\lambda_{i}(x,\mathbf{\hat{\Phi}}^{(T)})}~(i=1,2)$ on both sides of~\eqref{b39} for $\mathbf{\hat{\Phi}}$ and $\mathbf{\hat{\Phi}}^{(T)}$ respectively, we obtain
\begin{align}
&\partial_{x}\hat{\Phi}_{1}+\frac{1}{\lambda_{1}(x,\mathbf{\hat{\Phi}})}\partial_{t}\hat{\Phi}_{1}
=-\frac{a'(x)}{a(x)}\frac{1}{\Upsilon_{1,*}+\Upsilon_{2,*}-2}\frac{1}{\lambda_{1}(x,\mathbf{\hat{\Phi}})}
(\hat{\Phi}_{1}+\alpha\hat{\Phi}_{2}),\label{d1}\\
&\partial_{x}\hat{\Phi}_{2}+\frac{1}{\lambda_{2}(x,\mathbf{\hat{\Phi}})}\partial_{t}\hat{\Phi}_{2}
=-\frac{1}{\alpha}\frac{a'(x)}{a(x)}\frac{1}{\Upsilon_{1,*}+\Upsilon_{2,*}+2}\frac{1}{\lambda_{2}(x,\mathbf{\hat{\Phi}})}
(\hat{\Phi}_{1}+\alpha\hat{\Phi}_{2}),\label{d2}
\end{align}
and
\begin{align}
&\partial_{x}\hat{\Phi}^{(T)}_{1}+\frac{1}{\lambda_{1}(x,\mathbf{\hat{\Phi}}^{(T)})}\partial_{t}\hat{\Phi}^{(T)}_{1}
=-\frac{a'(x)}{a(x)}\frac{1}{\Upsilon_{1,*}+\Upsilon_{2,*}-2}\frac{1}{\lambda_{1}(x,\mathbf{\hat{\Phi}}^{(T)})}
(\hat{\Phi}^{(T)}_{1}+\alpha\hat{\Phi}^{(T)}_{2}),\label{d3}\\
&\partial_{x}\hat{\Phi}^{(T)}_{2}+\frac{1}{\lambda_{2}(x,\mathbf{\hat{\Phi}}^{(T)})}\partial_{t}\hat{\Phi}^{(T)}_{2}
=-\frac{1}{\alpha}\frac{a'(x)}{a(x)}\frac{1}{\Upsilon_{1,*}+\Upsilon_{2,*}+2}\frac{1}{\lambda_{2}(x,\mathbf{\hat{\Phi}}^{(T)})}
(\hat{\Phi}_{1}^{(T)}+\alpha\hat{\Phi}^{(T)}_{2}).\label{d4}
\end{align}
Let
$$\Theta(t)=\mathop{\max}\limits_{i=1,2}\mathop{\sup}\limits_{x\in[\max\{\gamma(t),\gamma^{(T)}(t)\},L]}|\hat{\Phi}_{i}(t,x)
-\hat{\Phi}^{(T)}_{i}(t,x)|,$$
with the aid of the bootstrap argument, we will prove the following proposition.
\begin{proposition}
For any given $l\in\mathbb{Z}_{+}$ and any given $t_{0}\in[lT_{0},(l+1)T_{0}]$, the following estimate holds
\begin{align}
\Theta(t)\leq M_{1}\epsilon\xi^{l}, \quad \forall t\in[lT_{0},t_{0}],\label{d13}
\end{align}
under the assumption
\begin{align}
\Theta(t)\leq M_{1}\epsilon\xi^{l-1}, \quad \forall t\in[(l-1)T_{0},t_{0}],\label{d14}
\end{align}
where $M_{1}>0$ and $0<\xi<1$ are two constants, which will be determined later.
\end{proposition}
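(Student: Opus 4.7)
The plan is to derive the contraction estimate by integrating the $x$-evolution equations~\eqref{d1}-\eqref{d4} along characteristics and combining the two dissipation factors $\alpha$ (at $x=L$) and $\mathcal{M}/\alpha$ (at the shock curve). First I would introduce the differences $\Delta\hat{\Phi}_i = \hat{\Phi}_i - \hat{\Phi}_i^{(T)}$ and $\Delta\gamma = \gamma - \gamma^{(T)}$, subtract the pairs of equations, and linearize to obtain transport-type equations for $\Delta\hat{\Phi}_1, \Delta\hat{\Phi}_2$ in the common subsonic region $\{(t,x):\, x>\max(\gamma(t),\gamma^{(T)}(t))\}$. The boundary at $x=L$ becomes $\Delta\hat{\Phi}_1(t,L) = \alpha\,\Delta\hat{\Phi}_2(t,L)$ since the forcing $\varphi(t)$ is identical for both solutions, while the shock matching is governed by the difference of~\eqref{b41} and the corresponding periodic version, linearized via~\eqref{b31}-\eqref{B34}.

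For a point $(t,x)$ with $t\in[lT_0,t_0]$ in the subsonic strip, I would integrate $\Delta\hat{\Phi}_2$ along its forward $x$-characteristic from the shock, where the boundary value is controlled by $|\Delta\gamma'| + |\Delta\hat{\Phi}_1|$ at the shock (with coefficients coming from $\partial G/\partial v$ and from $\partial G/\partial\bar{\rho}_l,\partial G/\partial\bar{u}_l$) and by the supersonic-side mismatch, which is already controlled by~\eqref{d16}. Likewise, I would integrate $\Delta\hat{\Phi}_1$ along its backward $x$-characteristic up to either $x=L$ or the shock. By the choice $T_0 = L\max_i\sup|1/\lambda_i|$, any characteristic starting at time $t\geq lT_0$ traces back to time $t'\geq (l-1)T_0$ after completing at most one round trip through the subsonic domain; thus all source contributions and boundary data at $t'$ lie in the interval where the induction hypothesis~\eqref{d14} is available. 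The internal sources carry the factor $a'/a$, small by~\eqref{a2}, contributing only $C\kappa$ times the previous sup-norm.

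The key algebraic step is to track the round-trip reflection factor: $\Delta\hat{\Phi}_2$ leaves the shock, reaches $x=L$, is reflected into $\Delta\hat{\Phi}_1$ with factor $\alpha$ via~\eqref{b40}, travels back to the shock, and by~\eqref{b41}-\eqref{b42} produces a new $\Delta\hat{\Phi}_2$ with an additional factor of order $\mathcal{M}/\alpha$, so that the net round-trip contraction is $\mathcal{M}(1+\alpha)/(2\alpha)$, strictly below $1$ thanks to the choice $\alpha\in(\mathcal{M}/(2-\mathcal{M}),1)$ guaranteed by~\eqref{a7}-\eqref{a8}. Taking $\xi$ slightly above this round-trip factor and $M_1$ sufficiently large relative to $C_L$ closes the bootstrap: after at most one round trip, $\Theta(t)\leq (\xi + o(1))M_1\epsilon\xi^{l-1} + C_L\epsilon\xi^{l-1} + C\kappa M_1\epsilon\xi^{l-1} \leq M_1\epsilon\xi^l$ on $[lT_0,t_0]$, provided $\kappa$ and $\epsilon$ are small.

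The main technical obstacle will be handling the spatial mismatch caused by $\gamma(t)\neq\gamma^{(T)}(t)$: the shock values of $\hat{\Phi}_2$ and $\hat{\Phi}_2^{(T)}$ are taken at different points, so their difference must be split into a $C^1$-term times $\Delta\gamma$ plus a shock-data difference. Controlling $\Delta\gamma$ and $\Delta\gamma'$ in terms of $\Theta$ requires differentiating~\eqref{b42} for both solutions and applying a Gronwall-type estimate analogous to~\eqref{c52}-\eqref{c53} in~\lemref{L2}, now restricted to a $t$-window of length $\leq T_0$ on which the induction hypothesis supplies the required prefactor $\xi^{l-1}$. Assembling these ingredients and absorbing the small-$\kappa$, small-$\epsilon$ corrections into the contraction finishes the proof.
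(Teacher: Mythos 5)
Your proposal follows essentially the same route as the paper: split the shock-boundary difference into a spatial-mismatch term plus a data difference controlled through the derivatives of $G$, bound $\gamma-\gamma^{(T)}$ and its derivative by a Gronwall argument on the shock ODE, extract the contraction factor $\mathcal{M}\frac{1+\alpha}{2\alpha}<1$ guaranteed by the choice of $\alpha$, and propagate the boundary estimate into the interior by integrating along the (mismatched) characteristics. The only detail worth adding is that $\xi$ must also be taken larger than the decay factor $e^{-\big(\frac{a'(x^{*})}{a(x^{*})}\frac{u_{l,*}(x^{*})}{2}-\circ(\epsilon)\big)T_{0}}$ arising from the Gronwall estimate for $\gamma-\gamma^{(T)}$ (which is integrated from $t=0$ using the dissipative sign of $\partial F/\partial x$), not only above the round-trip reflection factor.
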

\begin{proof}
As for the estimates of $\|\hat{\Phi}_{1}(t,\cdot)-\hat{\Phi}^{(T)}_{1}(t,\cdot)\|_{C^{0}}$,
noticing the right side boundary $x=L$ is fixed, we can get~\eqref{d13} easily by imitating the calculation in~\cite{Qu, Qup,QuP,Zhang} in every step. We will not elaborate on it here. While before we get estimates for $\hat{\Phi}_{2}$ and $\hat{\Phi}_{2}^{(T)}$, which start from shocks, we shall first give the estimates on the boundary.
For convenience and without generality, we let $\max\{\gamma(t),\gamma^{(T)}(t)\}=\gamma(t)$. Then, at the 'boundary' $x=\gamma(t)$, we have
\begin{align}
&|\hat{\Phi}_{2}(t,\gamma(t))-\hat{\Phi}_{2}^{(T)}(t,\gamma(t))|\notag\\
\leq&|\hat{\Phi}_{2}(t,\gamma(t))-\hat{\Phi}_{2}^{(T)}(t,\gamma^{(T)}(t))|
+|\hat{\Phi}_{2}^{(T)}(t,\gamma^{(T)}(t))-\hat{\Phi}_{2}^{(T)}(t,\gamma(t))|.\label{d17}
\end{align}
Moreover, by the corresponding boundary conditions and using~\eqref{a2}, \eqref{a13}, \eqref{b3}, \eqref{b31}-\eqref{B32} and~\eqref{DD2}, we have
\begin{align}
&|\hat{\Phi}_{2}(t,\gamma(t))-\hat{\Phi}_{2}^{(T)}(t,\gamma^{(T)}(t))|\notag\\
\leq&|\frac{1}{\alpha}G(\gamma(t),\gamma'(t),\bar{\rho}_{l}(t,\gamma(t)),\bar{u}_{l}(t,\gamma(t)))
-\frac{1}{\alpha}G(\gamma^{(T)}(t),{\gamma^{(T)}}'(t),\bar{\rho}_{l}^{(T)}(t,\gamma^{(T)}(t)),
\bar{u}_{l}^{(T)}(t,\gamma^{(T)}(t)))|\notag\\
\leq&\frac{1}{\alpha}\Big(C\epsilon|\gamma(t)-\gamma^{(T)}(t)|
+(\frac{(u_{l,*}(0)-1)^{2}}{u^{2}_{l,*}(0)}+C\kappa+C\epsilon)|\gamma'(t)-{\gamma^{(T)}}'(t)|\notag\\
&+(\frac{1}{\rho_{l,*}(0)}+C\kappa+C\epsilon)\big(|\partial_{x}\bar{\rho}_{l}||\gamma(t)-\gamma^{(T)}(t)|+
|(\bar{\rho}_{l}-\bar{\rho}_{l}^{(T)})(t,\gamma^{(T)}(t))|\big)\notag\\
&+(\frac{2u_{l,*}(0)-1}{u^{2}_{l,*}(0)}+C\kappa+C\epsilon)\big(|\partial_{x}\bar{u}_{l}||\gamma(t)-\gamma^{(T)}(t)|
+|(\bar{u}_{l}-\bar{u}_{l}^{(T)})(t,\gamma^{(T)}(t))|\big)\Big).\label{d18}
\end{align}
From the boundary conditions, mean value theorem of integrals and~\eqref{a2},\eqref{AA0}-\eqref{a13}, \eqref{AA5}, \eqref{b3}, \eqref{b33}-\eqref{B34}, \eqref{DD2}-\eqref{d16} \eqref{d14}, we can use the Gronwall inequality to get
\begin{align}
|\gamma(t)-\gamma^{(T)}(t)|\leq&e^{\big(\frac{a'(x^{*})}{a(x^{*})}(-\frac{u_{l,*}(x^{*})}{2})+\circ(\epsilon)\big)t}
\Big(2\epsilon+\int_{0}^{t}e^{-\big(\frac{a'(x^{*})}{a(x^{*})}(-\frac{u_{l,*}(x^{*})}{2})+\circ(\epsilon)\big)s}
\big(\frac{1+\alpha}{2}(\frac{u_{l,*}(0)}{2}\notag\\
&+C\kappa+C\epsilon)M_{1}\epsilon\xi^{l-1}
+(\frac{u_{l,*}(0)}{2\rho_{l,*}(0)}+C\kappa+C\epsilon)C_{L}\epsilon\xi^{l}+(1+C\epsilon)C_{L}\epsilon\xi^{l}\big)ds\Big)\notag\\
\leq&\Big(e^{-\big(\frac{a'(x^{*})}{a(x^{*})}\frac{u_{l,*}(x^{*})}{2}-\circ(\epsilon)\big)T_{0}}\Big)^{l}\Big(2\epsilon+
e^{\big(\frac{a'(x^{*})}{a(x^{*})}\frac{u_{l,*}(x^{*})}{2}-\circ(\epsilon)\big)(l+1)T_{0}}(l+1)T_{0}\notag\\
&\cdot\big(\frac{1+\alpha}{2}\frac{u_{l,*}(0)}{2}M_{1}\epsilon\xi^{l-1}
+\big(\frac{u_{l,*}(0)}{2\rho_{l,*}(0)}+1\big)C_{L}\epsilon\xi^{l}+(C\kappa+C\epsilon)\epsilon\xi^{l-1}\big)\Big)\notag\\
<&C_{\mathcal{F},0}\epsilon\xi^{l},\label{d19}
\end{align}
where $C_{\mathcal{F},0}>0$ is a constant, $\circ(\epsilon)$ represents equivalent infinitesimal quantity of $\epsilon$ and we choose $0<\xi<1$ satisfying $$\xi>e^{-\big(\frac{a'(x^{*})}{a(x^{*})}\frac{u_{l,*}(x^{*})}{2}-\circ(\epsilon)\big)T_{0}}.$$
Here we used the condition $\frac{a'(x^{*})}{a(x^{*})}>0$ and chose $\epsilon>0$ small enough such that $$\frac{a'(x^{*})}{a(x^{*})}(-\frac{u_{l,*}(x^{*})}{2})+\circ(\epsilon)<0.$$
Then, by the boundary condition and~\eqref{d19} , we get
\begin{align}
|\gamma'(t)-{\gamma^{(T)}}'(t)|\leq&(C\kappa+C\epsilon)C_{\mathcal{F},0}\epsilon\xi^{l}
+\frac{1+\alpha}{2}(\frac{u_{l,*}(0)}{2}+C\kappa+C\epsilon)M_{1}\epsilon\xi^{l-1}\notag\\
&+(\frac{u_{l,*}(0)}{2\rho_{l,*}(0)}+1+C\kappa+C\epsilon)C_{L}\epsilon\xi^{l}.\label{d20}
\end{align}
By~\eqref{DD2}-\eqref{d16} and~\eqref{d18}-\eqref{d20}, we have
\begin{align}
&|\hat{\Phi}_{2}(t,\gamma(t))-\hat{\Phi}_{2}^{(T)}(t,\gamma^{(T)}(t))|\notag\\
\leq& \frac{1}{\alpha}\Big(\frac{1+\alpha}{2}
\big(\frac{u_{l,*}(0)}{2}+C\kappa+C\epsilon\big)M_{1}\epsilon\xi^{l-1}
+(\frac{u_{l,*}(0)}{2\rho_{l,*}(0)}+1+C\kappa+C\epsilon)C_{L}\epsilon\xi^{l}\Big)
(\frac{(u_{l,*}(0)-1)^{2}}{u^{2}_{l,*}(0)}\notag\\
&+C\kappa+C\epsilon)+\frac{1}{\alpha}(\frac{1}{\rho_{l,*}(0)}+C\kappa+C\epsilon)C_{L}\epsilon\xi^{l}
+\frac{1}{\alpha}(\frac{2u_{l,*}(0)-1}{u^{2}_{l,*}(0)}+C\kappa+C\epsilon)C_{L}\epsilon\xi^{l}\notag\\
&+(C\kappa+C\epsilon)C_{\mathcal{F},0}\epsilon\xi^{l}\notag\\
\leq& \mathcal{M}\frac{1+\alpha}{2\alpha}M_{1}\epsilon\xi^{l-1}+\frac{1}{\alpha}
\Big(\frac{u_{l,*}^{2}(0)+1}{2\rho_{l,*}(0)u_{l,*}(0)}+1\Big)C_{L}\epsilon\xi^{l}+(C\kappa+C\epsilon)\epsilon\xi^{l-1}
.\label{d21}
\end{align}
Thus, by~\eqref{d17}, \eqref{d19} and~\eqref{d21}, one has
\begin{align}
&|\hat{\Phi}_{2}(t,\gamma(t))-\hat{\Phi}_{2}^{(T)}(t,\gamma(t))|\notag\\
\leq&\mathcal{M}\frac{1+\alpha}{2\alpha}M_{1}\epsilon\xi^{l-1}+\frac{1}{\alpha}
\Big(\frac{u_{l,*}^{2}(0)+1}{2\rho_{l,*}(0)u_{l,*}(0)}+1\Big)C_{L}\epsilon\xi^{l}+(C\kappa+C\epsilon)\epsilon\xi^{l-1}\notag\\
&+C\epsilon C_{\mathcal{F},0}\epsilon\xi^{l}\notag\\
<& M_{1}\epsilon\xi^{l},\label{d22}
\end{align}
where $M_{1}>\frac{4+2\sqrt{3}+\rho_{l,*}(0)}{(1-\mathcal{M}\frac{1+\alpha}{2\alpha})\alpha\rho_{l,*}(0)}C_{L}$ and $\xi>\mathcal{M}\frac{1+\alpha}{2\alpha}$.

With the boundary estimate~\eqref{d22} in hand, we prove~\eqref{d13} in the whole subsonic domain by integrating along the characteristic curves. Define the characteristic curves $\eta_{4}=\eta_{4}(y;t,x)$ and $\eta^{(T)}_{4}=\eta^{(T)}_{4}(y;t,x)$ as before. That is we have
\begin{align}\label{d23}
\left\{
\begin{aligned}
&\frac{d\eta_{4}}{dy}(y;t,x)=\frac{1}{\lambda_{2}(x+y,\mathbf{\hat{\Phi}}(\eta_{4}(y;t,x),x+y))},\\
&\eta_{4}(0;t,x)=t,
\end{aligned}
\right.
\end{align}
and
\begin{align}\label{d24}
\left\{
\begin{aligned}
&\frac{d\eta^{(T)}_{4}}{dy}(y;t,x)=\frac{1}{\lambda_{2}(x+y,\mathbf{\hat{\Phi}}^{(T)}(\eta_{4}^{(T)}(y;t,x),x+y))},\\
&\eta_{4}^{(T)}(0;t,x)=t,
\end{aligned}
\right.
\end{align}
with $y_{1}(t,x)$ and $y_{1}^{(T)}(t,x)$ satisfying
\begin{align*}
&\gamma(\eta_{4}(y_{1};t,x))=x+y_{1},\\
&\gamma^{(T)}(\eta_{4}^{(T)}(y_{1}^{(T)};t,x))=x+y_{1}^{(T)}.
\end{align*}
Moreover, by~\eqref{c4}, \eqref{d14} and~\eqref{d23}-\eqref{d24} and the Gronwall inequality, we get
\begin{align}
|\eta_{4}(y;t,x)-\eta^{(T)}_{4}(y;t,x)|\leq M_{2}\epsilon\xi^{l-1}e^{M_{2}\epsilon}\leq(1+C\epsilon)M_{2}\epsilon\xi^{l-1},\label{DDD24}
\end{align}
where the constant $M_{2}=L(M_{1}+C_{1})\mathop{\max}\limits_{i=1,2}\mathop{\sup}\limits_{\mathop{t\in[lT_{0},t_{0}]}\limits_{x\in[\gamma(t),L]}}\Big
|\nabla\frac{1}{\lambda_{i}}(x,\mathbf{\hat{\Phi}}(t,x))\Big|$. And then by~\eqref{d19}, one has
\begin{align*}
|y_{1}-y_{1}^{(T)}|\leq&|\gamma(\eta_{4}(y_{1};t,x))-\gamma^{(T)}(\eta_{4}^{(T)}(y_{1}^{(T)};t,x))|\notag\\
\leq&|\gamma(\eta_{4}(y_{1};t,x))-\gamma^{(T)}(\eta_{4}(y_{1};t,x))|+|\gamma^{(T)}(\eta_{4}(y_{1};t,x))
-\gamma^{(T)}(\eta_{4}^{(T)}(y_{1};t,x))|\notag\\
&+|\gamma^{(T)}(\eta_{4}^{(T)}(y_{1};t,x))-\gamma^{(T)}(\eta_{4}^{(T)}(y_{1}^{(T)};t,x))|\notag\\
\leq&C_{\mathcal{F},0}\epsilon\xi^{l}+C\epsilon(1+C\epsilon)M_{1}\epsilon\xi^{l-1}+C\epsilon|y_{1}-y_{1}^{(T)}|,
\end{align*}
which indicates
\begin{align}
|y_{1}-y_{1}^{(T)}|\leq \frac{1}{1-C\epsilon}\big(C_{\mathcal{F},0}\epsilon\xi^{l}+C\epsilon(1+C\epsilon)M_{1}\epsilon\xi^{l-1}\big).
\label{YYY1}
\end{align}
Integrating~\eqref{d2} and~\eqref{d4} along the corresponding characteristic curves $\eta_{4}=\eta_{4}(y;t,x)$ and $\eta^{(T)}_{4}=\eta^{(T)}_{4}(y;t,x)$ respectively, we have
\begin{align}
\hat{\Phi}_{2}(t,x)=&\hat{\Phi}_{2}(\eta_{4}(y_{1};t,x),x+y_{1})-\int_{0}^{y_{1}}
-\frac{1}{\alpha}\frac{a'}{a}\frac{1}{\Upsilon_{1,*}+\Upsilon_{2,*}+2}\frac{1}{\lambda_{2}}
\notag\\
&\cdot(\hat{\Phi}_{1}+\alpha\hat{\Phi}_{2})(\eta_{4}(y;t,x),x+y)dy,\label{d25}
\end{align}
and
\begin{align}
\hat{\Phi}^{(T)}_{2}(t,x)=&\hat{\Phi}^{(T)}_{2}(\eta^{(T)}_{4}(y^{(T)}_{1};t,x),x+y^{(T)}_{1})-\int_{0}^{y^{(T)}_{1}}
-\frac{1}{\alpha}\frac{a'}{a}\frac{1}{\Upsilon_{1,*}+\Upsilon_{2,*}+2}
\frac{1}{\lambda_{2}}
\notag\\
&\cdot(\hat{\Phi}^{(T)}_{1}+\alpha\hat{\Phi}^{(T)}_{2})(\eta^{(T)}_{4}(y;t,x),x+y)dy.\label{d26}
\end{align}
Then, by~\eqref{a2}, \eqref{DD2}, \eqref{d14}, \eqref{d19}, \eqref{d22}, \eqref{DDD24}-\eqref{YYY1} and~\eqref{d25}-\eqref{d26}, we have
\begin{align}
|\hat{\Phi}_{2}(t,x)-\hat{\Phi}^{(T)}_{2}(t,x)|
\leq&\mathcal{M}\frac{1+\alpha}{2\alpha}M_{1}\epsilon\xi^{l-1}+\frac{1}{\alpha}
\Big(\frac{u_{l,*}^{2}(0)+1}{2\rho_{l,*}(0)u_{l,*}(0)}+1\Big)C_{L}\epsilon\xi^{l}+(C\kappa+C\epsilon)\epsilon\xi^{l-1}\notag\\
&+C\kappa M_{1}\epsilon\xi^{l-1}+C\epsilon(1+C\epsilon)M_{2}\epsilon\xi^{l-1}+C\epsilon C_{\mathcal{F},0}\epsilon\xi^{l}\notag\\
<&M_{1}\epsilon\xi^{l},\label{d27}
\end{align}
which means that we proved~\eqref{d13}.
\end{proof}

Now, we only need to estimate the perturbations of shock. Similar to~\eqref{d19}-\eqref{d20}, we obtain
\begin{align}
|\gamma(t)-\gamma^{(T)}(t)|
\leq&\Big(e^{-\big(\frac{a'(x^{*})}{a(x^{*})}\frac{u_{l,*}(x^{*})}{2}-\circ(\epsilon)\big)T_{0}}\Big)^{l}\Big(2\epsilon
+e^{\big(\frac{a'(x^{*})}{a(x^{*})}\frac{u_{l,*}(x^{*})}{2}-\circ(\epsilon)\big)(l+1)T_{0}}(l+1)T_{0}\notag\\
&\cdot\big(\frac{1+\alpha}{2}\frac{u_{l,*}(0)}{2}M_{1}\epsilon\xi^{l}
+\big(\frac{u_{l,*}(0)}{2\rho_{l,*}(0)}+1\big)C_{L}\epsilon\xi^{l}+(C\kappa+C\epsilon)\epsilon\xi^{l}\big)\Big)\notag\\
<&C_{\mathcal{F},0}\epsilon\xi^{l},\label{d28}
\end{align}
and
\begin{align}
|\gamma'(t)-{\gamma^{(T)}}'(t)|\leq&(C\kappa+C\epsilon)C_{\mathcal{F},0}\epsilon\xi^{l}
+\frac{1+\alpha}{2}(\frac{u_{l,*}(0)}{2}+C\kappa+C\epsilon)M_{1}\epsilon\xi^{l}\notag\\
&+(\frac{u_{l,*}(0)}{2\rho_{l,*}(0)}+1+C\kappa+C\epsilon)C_{L}\epsilon\xi^{l}\notag\\
<&C_{\mathcal{F},1}\epsilon\xi^{l},\label{d29}
\end{align}
where $C_{\mathcal{F},1}>0$ is a constant. Therefore, we finish the proof of~\theref{t2}.

\appendix\section*{Appendix}
\indent\indent In this appendix, the proofs of Theorems~A.1 and~A.2 will be given. We first prove the following lemma.\\
\textbf{Lemma~A.1}~\textit{
For any smooth function $\Xi(\psi,\omega_{i})$ satisfying~\eqref{bb2}-\eqref{bb3}, there exist $C_{\Xi}>0$ and $\varepsilon_{0}>0$, such that when $\omega_{i}$ satisfies~\eqref{TB1} and $\|\omega_{i}\|_{C^{1}}<\varepsilon_{0}(i=1,2,3)$, the following two results hold.
\begin{enumerate}
\item[1).]
There exists a continuous function $\sigma(t)$ satisfying
\begin{equation*}
\Xi(\sigma(t),\omega_{i}(t,\sigma(t)))=0,\quad \forall t\in\mathbb{R},\eqno{(A.1)}
\end{equation*}
and
\begin{equation*}
\|\sigma\|\leq(1+C_{\Xi}\varepsilon_{0})\frac{1}{\Xi_{\psi,0}}
\sum_{i=1}^{3}\Xi_{\omega_{i,0}}\|\omega_{i}\|.\eqno{(A.2)}
\end{equation*}
\indent\item[2).]
System~\eqref{bb1} admits a unique periodic solution $\psi^{*}(t)$ satisfying
\begin{equation*}
\|\psi^{*}\|\leq(1+C_{\Xi}\varepsilon_{0})\frac{1}{\Xi_{\psi,0}}
\sum_{i=1}^{3}\Xi_{\omega_{i,0}}\|\omega_{i}\|.\eqno{(A.3)}
\end{equation*}
\end{enumerate}
}
\begin{proof}
Letting
$$\tilde{\Xi}(t,\sigma)=\Xi(\sigma,\omega_{i}(t,\sigma)),$$
and taking the partial derivative of $\sigma$ yields
$$\frac{\partial\tilde{\Xi}}{\partial\sigma}=\frac{\partial\Xi}{\partial\psi}(\sigma,\omega_{i}(t,\sigma))
+\sum_{j=1}^{3}\frac{\partial\Xi}{\partial\omega_{j}}(\sigma,\omega_{i}(t,\sigma))
\frac{\partial\omega_{j}}{\partial\sigma},\quad i=1,2,3,$$
then by~\eqref{bb2}-\eqref{bb3}, $\|\omega_{i}\|_{C^{1}}<\varepsilon_{0}(i=1,2,3)$ for small enough $\varepsilon_{0}>0$, we have
\begin{equation*}
\frac{\partial\tilde{\Xi}}{\partial\sigma}<0,\eqno{(A.4)}
\end{equation*}
and by the implicit function theorem, we get~(A.1). Moreover, it follows from~\eqref{bb2}
\begin{align*}
\Xi(\sigma(t),\omega_{i}(t,\sigma(t)))-\Xi(0,\omega_{i}(t,\sigma(t)))=\Xi(0,0)-\Xi(0,\omega_{i}(t,\sigma(t))).
\end{align*}
%furthermore,
%\begin{align}
%|\Xi(\sigma(t),\omega_{i}(t,\sigma(t)))-\Xi(0,\omega_{i}(t,\sigma(t)))|
%=|\Xi(0,0)-\Xi(0,\omega_{i}(t,\sigma(t)))|.\label{bb7}
%\end{align}
Noticing~\eqref{bb3} and $\|\omega_{i}\|_{C^{1}}<\varepsilon_{0}$, we get
\begin{equation*}
|\Xi(\sigma(t),\omega_{i}(t,\sigma(t)))-\Xi(0,\omega_{i}(t,\sigma(t)))|
\geq(1-C\varepsilon_{0})\Xi_{\psi,0}|\sigma(t)|,\eqno{(A.5)}
\end{equation*}
\begin{equation*}
\quad\quad|\Xi(0,0)-\Xi(0,\omega_{i}(t,\sigma(t)))|
\leq(1+C\varepsilon_{0})\sum_{i=1}^{3}\Xi_{\omega_{i,0}}\|\omega_{i}\|,
\eqno{(A.6)}
\end{equation*}
where $C>0$ is a generic constant, and~(A.2) follows.

We deduce from~(A.1)-(A.2) and~(A.4) that for small enough $\varepsilon_{0}$,
\begin{equation*}
\Xi\Big(\sigma^{*}_{0},\omega_{i}\big(t,\sigma^{*}_{0}\big)\Big)<0,\eqno{(A.7)}
\end{equation*}
and
\begin{equation*}
\Xi\Big(-\sigma^{*}_{0},\omega_{i}\big(t,-\sigma^{*}_{0}\big)\Big)>0,\eqno{(A.8)}
\end{equation*}
where $$\sigma^{*}_{0}\mathop{=}\limits^{\triangle}(1+C_{\Xi}\varepsilon_{0})\frac{1}{\Xi_{\psi,0}}\sum_{i=1}^{3}\Xi_{\omega_{i,0}}\|\omega_{i}\|>0.$$
We define the stream function $\tilde{\psi}(\tau;t,x)$ of system~\eqref{bb1} as
 %satisfying the following initial value problem of the ordinary differential equation
\begin{equation*}
\left\{
\begin{aligned}
&\frac{d\tilde{\psi}}{d\tau}(\tau;t,x)=\Xi(\tilde{\psi}(\tau;t,x),\omega_{i}(t+\tau,\tilde{\psi}(\tau;t,x))),\\
&\tilde{\psi}(0;t,x)=x.
\end{aligned}
\right.\eqno{(A.9)}
\end{equation*}
It follows from~(A.7)-(A.8) that when $\varepsilon_{0}$ is small enough, for any given $t\in\mathbb{R}, \tau\in\mathbb{R}$ and
$x\in[-\sigma^{*}_{0},\sigma^{*}_{0}],$
we have
\begin{align*}
\tilde{\psi}(\tau;t,x)\in[-\sigma^{*}_{0},\sigma^{*}_{0}].
\end{align*}
It can be inferred from the Brouwer fixed point theorem and the continuous dependence of the solution on the initial value that there exists a point
$x^{\star}\in[-\sigma^{*}_{0},\sigma^{*}_{0}],$
such that $\tilde{\psi}(0;0,x^{\star})=\tilde{\psi}(T;0,x^{\star})$, and due to $\Xi(\psi,\omega_{i}(t,\psi))=\Xi(\psi,\omega_{i}(t+T,\psi))$, we have
\begin{align*}
\tilde{\psi}(kT;0,x^{\star})=\tilde{\psi}(0;0,x^{\star}),\quad \forall k\in\mathbb{Z}.
\end{align*}
Therefore, system~\eqref{bb1} admits a periodic solution $\psi^{*}(t)$ and
\begin{align*}
\|\psi^{*}\|\leq\sigma^{*}_{0}.
\end{align*}
Next, we prove the uniqueness of the periodic solution. If system~\eqref{bb1} admits two periodic solutions $\psi_{1}(t)$ and $\psi_{2}(t)$ that satisfy the above properties, then for $x_{1}=\psi_{1}(0), x_{2}=\psi_{2}(0)$,
\begin{equation*}
\tilde{\psi}(0;0,x_{1})=\tilde{\psi}(T;0,x_{1}),\eqno{(A.10)}
\end{equation*}
\begin{equation*}
\tilde{\psi}(0;0,x_{2})=\tilde{\psi}(T;0,x_{2}).\eqno{(A.11)}
\end{equation*}
By the definition of the stream function, we have
\begin{equation*}
\frac{\partial\tilde{\psi}}{\partial\tau}(\tau;t,x)=
\Xi(\tilde{\psi}(\tau;t,x),\omega_{i}(t+\tau,
\tilde{\psi}(\tau;t,x))).\eqno{(A.12)}
\end{equation*}
Differentiate $x$ on both sides of~(A.12) yields
\begin{align*}
\frac{\partial^{2}\tilde{\psi}}{\partial x\partial\tau}(\tau;t,x)=&\Big(\frac{\partial\Xi}{\partial\tilde{\psi}}(\tilde{\psi},\omega_{i}(t+\tau,\tilde{\psi}))\\
&+\sum_{j=1}^{3}\frac{\partial\Xi}{\partial\omega_{j}}
(\tilde{\psi},\omega_{i}(t+\tau,\tilde{\psi}))\frac{\partial\omega_{j}}{\partial\tilde{\psi}}(t+\tau,\tilde{\psi})\Big)
\frac{\partial\tilde{\psi}}{\partial x}(\tau;t,x), \quad i=1,2,3,
\end{align*}
which means
\begin{equation*}
\begin{aligned}
\frac{\partial\tilde{\psi}}{\partial x}(\tau;t,x)
=&\exp\{\int_{0}^{\tau}\frac{\partial\Xi}{\partial\tilde{\psi}}(\tilde{\psi}(s;t,x),\omega_{i}(t+s,\tilde{\psi}(s;t,x)))\notag\\
&+\sum_{j=1}^{3}\frac{\partial\Xi}{\partial\omega_{j}}
(\tilde{\psi}(s;t,x),\omega_{i}(t+s,\tilde{\psi}(s;t,x)))\frac{\partial\omega_{j}}{\partial\tilde{\psi}}(t+s,\tilde{\psi}(s;t,x))ds\}.
\end{aligned}\eqno{(A.13)}
\end{equation*}
On one hand, noticing that $\|\omega_{j}\|_{C^{1}}\leq\varepsilon_{0}$ is small enough, we have
\begin{equation*}
0<\frac{\partial\tilde{\psi}}{\partial x}(\tau;t,x)<1,\quad \text{for} ~\tau>0, t\in\mathbb{R},x\in[-\sigma^{*}_{0},\sigma^{*}_{0}].\eqno{(A.14)}
\end{equation*}
On the other hand, (A.10)-(A.11) indicate that there exists $\zeta\in[0,1]$, such that
\begin{align*}
\frac{\partial\tilde{\psi}}{\partial x}(T;0,\zeta x_{2}+(1-\zeta)x_{1})=&\frac{\tilde{\psi}(T;0,x_{2})-\tilde{\psi}(T;0,x_{1})}{x_{2}-x_{1}}\\
=&\frac{\tilde{\psi}(0;0,x_{2})-\tilde{\psi}(0;0,x_{1})}{x_{2}-x_{1}}=1,
\end{align*}
which contradicts with~(A.14). Thus, system~\eqref{bb1} has only one unique periodic solution.

\end{proof}
\noindent$\mathbf{The~proof~of~Theorem~A.1.~~}$
By~Lemma~A.1, we obtain that system~\eqref{bb1} admits a unique periodic solution and~\eqref{bb19} holds.
Using~\eqref{bb1}, for the periodic solution $\psi^{*}(t)$, we have
\begin{equation*}
{\psi^{*}}'(t)=\Xi(\psi^{*}(t),\omega_{i}(t,\psi^{*}(t))).\eqno{(A.15)}
\end{equation*}
By~\eqref{bb2}, \eqref{bb19} and taking a small enough constant $\varepsilon_{1}$, we have
\begin{equation*}
\begin{aligned}
|{\psi^{*}}'(t)|=&|\Xi(\psi^{*}(t),\omega_{i}(t,\psi^{*}(t)))|\notag\\
\leq&\int_{0}^{1}\Big|\frac{\partial\Xi}{\partial\psi}(s\psi^{*}(t),s\omega_{i}(t,\psi^{*}(t)))\psi^{*}(t)
+\sum_{j=1}^{3}\frac{\partial\Xi}{\partial\omega_{j}}(s\psi^{*}(t),s\omega_{i}(t,\psi^{*}(t)))\omega_{j}(t,\psi^{*}(t))\Big|ds\notag\\
\leq&(1+C\varepsilon_{1})\Xi_{\psi,0}\|\psi^{*}\|
+(1+C\varepsilon_{1})\sum_{i=1}^{3}\Xi_{\omega_{i,0}}\|\omega_{i}\|\notag\\
\leq&(2+C\varepsilon_{1})\sum_{i=1}^{3}\Xi_{\omega_{i,0}}\|\omega_{i}\|,
\end{aligned}\eqno{(A.16)}
\end{equation*}
which means~\eqref{bb20} is proved. In order to prove~\eqref{bb21}, we denote
$$\sigma_{1}^{*}\mathop{=}\limits^{\triangle}(1+C_{\Xi}\varepsilon_{1})\frac{1}{\Xi_{\psi,0}}\sum_{i=1}^{3}\Xi_{\omega_{i,0}}\|\omega_{i}\|,
$$
and take the derivative on both sides of~(A.15) with respect to $t$ yields
\begin{equation*}
\begin{aligned}
{\psi^{*}}''(t)=&\sum_{j=1}^{3}\frac{\partial \Xi}{\partial\omega_{j}}(\psi^{*}(t),\omega_{i}(t,\psi^{*}(t)))\Big(\frac{\partial\omega_{j}}{\partial t}(t,\psi^{*}(t))+\frac{\partial\omega_{j}}{\partial\psi}(t,\psi^{*}(t)){\psi^{*}}'(t)\Big)\notag\\
&+\frac{\partial \Xi}{\partial\psi}(\psi^{*}(t),\omega_{i}(t,\psi^{*}(t))){\psi^{*}}'(t).\label{bb24}
\end{aligned}\eqno{(A.17)}
\end{equation*}
Moreover, we have
\begin{equation*}
\begin{aligned}
&\Big|\frac{\partial \Xi}{\partial\psi}(\psi^{*}(t),\omega_{i}(t,\psi^{*}(t)))\Big|\notag\\
=&\Big|\frac{\partial \Xi}{\partial\psi}(0,0)+\int_{0}^{1}\frac{\partial^{2}\Xi}{\partial\psi^{2}}(s\psi^{*}(t),s\omega_{i}(t,\psi^{*}(t)))\psi^{*}(t)\notag\\
&+\sum_{j=1}^{3}\frac{\partial^{2}\Xi}{\partial\psi\partial\omega_{j}}(s\psi^{*}(t),s\omega_{i}(t,\psi^{*}(t)))
\omega_{j}(t,\psi^{*}(t))ds\Big|\notag\\
\leq&\Xi_{\psi,0}+(1+C\varepsilon_{1})\Big(\Big|\frac{\partial^{2}\Xi}{\partial\psi^{2}}(0,0)\Big|\|\psi^{*}\|
+\sum_{j=1}^{3}\Big|\frac{\partial^{2}\Xi}{\partial\psi\partial\omega_{j}}(0,0)\Big|\|\omega_{j}\|\Big),\label{bb25}
\end{aligned}\eqno{(A.18)}
\end{equation*}
\begin{equation*}
\begin{aligned}
&\Big|\frac{\partial \Xi}{\partial\omega_{j}}(\psi^{*}(t),\omega_{i}(t,\psi^{*}(t)))\Big|\notag\\
=&\Big|\frac{\partial \Xi}{\partial\omega_{j}}(0,0)+\int_{0}^{1}\frac{\partial^{2}\Xi}{\partial\psi\partial\omega_{j}}(s\psi^{*}(t),s\omega_{i}(t,\psi^{*}(t)))
\psi^{*}(t)\notag\\
&+\sum_{k=1}^{3}\frac{\partial^{2}\Xi}{\partial\omega_{j}\partial\omega_{k}}(s\psi^{*}(t),s\omega_{i}(t,\psi^{*}(t)))
\omega_{k}(t,\psi^{*}(t))ds\Big|\notag\\
\leq&\Xi_{\omega_{j,0}}+(1+C\varepsilon_{1})\Big(\Big|\frac{\partial^{2}\Xi}{\partial\psi\partial\omega_{j}}(0,0)\Big|\|\psi^{*}\|
+\sum_{k=1}^{3}\Big|\frac{\partial^{2}\Xi}{\partial\omega_{j}\partial\omega_{k}}(0,0)\Big|\|\omega_{k}\|\Big).\label{bb26}
\end{aligned}\eqno{(A.19)}
\end{equation*}
Thus, by~\eqref{bb19} and~(A.16)-(A.19), we get
\begin{equation*}
\begin{aligned}
|{\psi^{*}}''(t)|\leq&\Big|\frac{\partial \Xi}{\partial\psi}(\psi^{*}(t),\omega_{i}(t,\psi^{*}(t)))\Big||{\psi^{*}}'(t)|\notag\\
&+\sum_{j=1}^{3}\Big|\frac{\partial \Xi}{\partial\omega_{j}}(\psi^{*}(t),\omega_{i}(t,\psi^{*}(t)))\Big|\Big(\Big|\frac{\partial\omega_{j}}{\partial t}(t,\psi^{*}(t))\Big|+\Big|\frac{\partial\omega_{j}}{\partial\psi}(t,\psi^{*}(t))\Big||{\psi^{*}}'(t)|\Big)\notag\\
\leq&\Big(\Xi_{\psi,0}+(1+C\varepsilon_{1})\big(\Big|\frac{\partial^{2}\Xi}{\partial\psi^{2}}(0,0)\Big|
\sigma_{1}^{*}+\sum_{j=1}^{3}\Big|\frac{\partial^{2}\Xi}{\partial\psi\partial\omega_{j}}(0,0)\Big|\|\omega_{j}\|\big)\Big)
(2+C\varepsilon_{1})\sum_{i=1}^{3}\Xi_{\omega_{i,0}}\|\omega_{i}\|\notag\\
&+\sum_{j=1}^{3}\Big(\Xi_{\omega_{j,0}}+(1+C\varepsilon_{1})\big(\Big|\frac{\partial^{2}\Xi}{\partial\psi\partial\omega_{j}}(0,0)\Big|
\sigma_{1}^{*}+\sum_{k=1}^{3}\Big|\frac{\partial^{2}\Xi}{\partial\omega_{j}\partial\omega_{k}}(0,0)\Big|\|\omega_{k}\|\big)\Big)\notag\\
&\cdot\Big(\Big\|\frac{\partial\omega_{j}}{\partial t}\Big\|+\Big\|\frac{\partial\omega_{j}}{\partial\psi}\Big\|(2+C\varepsilon_{1})\sum_{i=1}^{3}\Xi_{\omega_{i,0}}\|\omega_{i}\|\Big)\notag\\
\leq&(2+C\varepsilon_{1})\Xi_{\psi,0}\sum_{i=1}^{3}\Xi_{\omega_{i,0}}\|\omega_{i}\|
+\sum_{j=1}^{3}\Big((1+C\varepsilon_{1})\Xi_{\omega_{j,0}}\|\frac{\partial\omega_{j}}{\partial t}\|\notag\\
&+(2+C\varepsilon_{1})\Xi_{\omega_{j,0}}\varepsilon_{1}\sum_{i=1}^{3}\Xi_{\omega_{i,0}}\|\omega_{i}\|\Big)\notag\\
\leq&(2+C_{\Xi}\varepsilon_{1})\Xi_{\psi,0}\sum_{i=1}^{3}\Xi_{\omega_{i,0}}\|\omega_{i}\|
+(1+C_{\Xi}\varepsilon_{1})\sum_{i=1}^{3}\Xi_{\omega_{i,0}}\|\frac{\partial\omega_{i}}{\partial t}\|,
\end{aligned}\eqno{(A.20)}
\end{equation*}
which leads to~\eqref{bb21}.

In the same way, in order to prove~Theorem~A.2, we first demonstrate the following lemma.
\textbf{Lemma~A.2}~\textit{
Assume $\varepsilon_{0}$ is same as which in~Lemma~A.1. When two sets of disturbed functions ${\omega_{1}}_{i}(t,\psi)(i=1,2,3)$ and ${\omega_{2}}_{i}(t,\psi)(i=1,2,3)$ in system~\eqref{bb1} satisfy
$$\|{\omega_{1}}_{i}\|_{C^{1}}\leq\varepsilon_{0},~~\|{\omega_{2}}_{i}\|_{C^{1}}\leq\varepsilon_{0},$$
there exists a constant $C_{\Xi}>0$, such that for any $\tau\in\mathbb{R}_{+}, t\in\mathbb{R}$ and
$x\in[-\tilde{\sigma}^{*},\tilde{\sigma}^{*}],$
two stream functions $\tilde{\psi}_{1}(\tau;t,x)$ and $\tilde{\psi}_{2}(\tau;t,x)$ with respect to disturbed functions ${\omega_{1}}_{i}$ and ${\omega_{2}}_{i}$ satisfy
\begin{align*}
\tilde{\psi}_{1}(\tau;t,x)\in[-\tilde{\sigma}^{*},\tilde{\sigma}^{*}],
\quad\tilde{\psi}_{2}(\tau;t,x)\in[-\tilde{\sigma}^{*},\tilde{\sigma}^{*}],
\end{align*}
and
\begin{equation*}
\begin{aligned}
&|\tilde{\psi}_{1}(\tau;t,x)-\tilde{\psi}_{2}(\tau;t,x)|\notag\\
\leq&\frac{1}{\Xi_{\psi,0}}\Big(\exp\big((1+C_{\Xi}\varepsilon_{0})\Xi_{\psi,0}\tau\big)-1\Big)\sum_{i=1}^{3}\Xi_{\omega_{i,0}}\|
{\omega_{1}}_{i}-{\omega_{2}}_{i}\|,
\end{aligned}\eqno{(A.21)}
\end{equation*}
where $\tilde{\sigma}^{*}\mathop{=}\limits^{\triangle}(1+\tilde{C}(\Xi)\varepsilon_{0})\frac{1}{\Xi_{\psi,0}}\sum_{i=1}^{3}\Xi_{\omega_{i,0}}\|
\omega_{i}\|$ with some contant $\tilde{C}_{\Xi}>0$.
}
\begin{proof}
For any $t\in\mathbb{R}, \tau\in\mathbb{R}_{+}$, $x\in[-\tilde{\sigma}^{*},\tilde{\sigma}^{*}],$
assuming $x_{j}(s)~(j=1,2)$ are solutions of the following initial value problem respectively
\begin{align*}
\left\{
\begin{aligned}
&\frac{dx_{j}(s)}{ds}=\Xi(x_{j}(s),{\omega_{j}}_{1}(s,x_{j}(s)),{\omega_{j}}_{2}(s,x_{j}(s)),{\omega_{j}}_{3}(s,x_{j}(s))),\\
&x_{j}(t)=x,
\end{aligned}
\right.
\end{align*}
then by the definition of the stream function, we have
$$x_{1}(t+\tau)=\tilde{\psi}_{1}(\tau;t,x),~x_{2}(t+\tau)=\tilde{\psi}_{2}(\tau;t,x),$$
and
\begin{align*}
\Big|\frac{d(x_{1}-x_{2})}{ds}(s)\Big|=&|\Xi(x_{1}(s),{\omega_{1}}_{i}(s,x_{1}(s)))
-\Xi(x_{2}(s),{\omega_{2}}_{i}(s,x_{2}(s)))|\notag\\
\leq&|\Xi(x_{1}(s),{\omega_{1}}_{i}(s,x_{1}(s)))
-\Xi(x_{2}(s),{\omega_{1}}_{i}(s,x_{2}(s)))|\notag\\
&+|\Xi(x_{2}(s),{\omega_{1}}_{i}(s,x_{2}(s)))
-\Xi(x_{2}(s),{\omega_{2}}_{i}(s,x_{2}(s)))|\\
\leq&(1+C\varepsilon_{0})\Xi_{\psi,0}|x_{1}(s)-x_{2}(s)|
+(1+C\varepsilon_{0})\sum_{i=1}^{3}\Xi_{\omega_{i,0}}\varepsilon_{0}|x_{1}(s)-x_{2}(s)|\\
&+(1+C\varepsilon_{0})\sum_{i=1}^{3}\Xi_{\omega_{i,0}}\|{\omega_{1}}_{i}-{\omega_{2}}_{i}\|\\
\leq&(1+C_{\Xi}\varepsilon_{0})\Xi_{\psi,0}|x_{1}(s)-x_{2}(s)|+(1+C_{\Xi}\varepsilon_{0})\sum_{i=1}^{3}
\Xi_{\omega_{i,0}}\|{\omega_{1}}_{i}-{\omega_{2}}_{i}\|.
\end{align*}
By $x_{1}(t)=x_{2}(t)=x$, the Gronwall inequality indicates
\begin{align*}
|x_{1}(t+\tau)-x_{2}(t+\tau)|\leq&\frac{1}{\Xi_{\psi,0}}\Big(\exp\big((1+C_{\Xi}\varepsilon_{0})\Xi_{\psi,0}\tau\big)-1\Big)
\sum_{i=1}^{3}\Xi_{\omega_{i,0}}\|{\omega_{1}}_{i}-{\omega_{2}}_{i}\|.
\end{align*}
Owing to $x_{1}(t+\tau)=\tilde{\psi}_{1}(\tau;t,x),~x_{2}(t+\tau)=\tilde{\psi}_{2}(\tau;t,x)$, we obtain~(A.21).

\end{proof}
$\mathbf{The~proof~of~Theorem~A.2.~~}$
Similar to~Lemma~A.2, we assume $\tilde{\psi}_{i}(\tau;t,x)(i=1,2)$ are the stream functions of~\eqref{bb28} and~\eqref{bb29}, then for any $t\in\mathbb{R}$, we have
\begin{align*}
&\tilde{\psi}_{1}(T;t,\psi_{1}^{*}(t))=\psi_{1}^{*}(t),\\
&\tilde{\psi}_{2}(T;t,\psi_{2}^{*}(t))=\psi_{2}^{*}(t),
\end{align*}
furthermore,
\begin{equation*}
\begin{aligned}
&|\psi_{1}^{*}(t)-\psi_{2}^{*}(t)|\notag\\
=&|\tilde{\psi}_{1}(T;t,\psi_{1}^{*}(t))-\tilde{\psi}_{2}(T;t,\psi_{2}^{*}(t))|\notag\\
\leq&|\tilde{\psi}_{1}(T;t,\psi_{1}^{*}(t))-\tilde{\psi}_{2}(T;t,\psi_{1}^{*}(t))|
+|\tilde{\psi}_{2}(T;t,\psi_{1}^{*}(t))-\tilde{\psi}_{2}(T;t,\psi_{2}^{*}(t))|.\label{bb35}
\end{aligned}\eqno{(A.22)}
\end{equation*}
By~Lemma~A.2, we get
\begin{equation*}
\begin{aligned}
&|\tilde{\psi}_{1}(T;t,\psi_{1}^{*}(t))-\tilde{\psi}_{2}(T;t,\psi_{1}^{*}(t))|\notag\\
\leq&\frac{1}{\Xi_{\psi,0}}\Big(\exp\big((1+C_{\Xi}\varepsilon_{2})\Xi_{\psi,0}T\big)-1\Big)\sum_{i=1}^{3}
\Xi_{\omega_{i,0}}\|{\omega_{1}}_{i}-{\omega_{2}}_{i}\|,
\end{aligned}\eqno{(A.23)}
\end{equation*}
and using~(A.13), we have
\begin{equation*}
\begin{aligned}
&|\tilde{\psi}_{2}(T;t,\psi_{1}^{*}(t))-\tilde{\psi}_{2}(T;t,\psi_{2}^{*}(t))|\notag\\
=&\Big|\int_{0}^{1}\frac{\partial\tilde{\psi}_{2}}{\partial x}(T;t,(1-\zeta)\psi_{1}^{*}(t)+\zeta\psi_{2}^{*}(t))
(\psi_{2}^{*}(t)-\psi_{1}^{*}(t))d\zeta\Big|\notag\\
\leq&\int_{0}^{1}\Big|\frac{\partial\tilde{\psi}_{2}}{\partial x}(T;t,(1-\zeta)\psi_{1}^{*}(t)+\zeta\psi_{2}^{*}(t))\Big|d\zeta
|\psi_{1}^{*}(t)-\psi_{2}^{*}(t)|\notag\\
\leq&\exp(-(1-C_{\Xi}\varepsilon_{2})\Xi_{\psi,0}T)|\psi_{1}^{*}(t)-\psi_{2}^{*}(t)|.\label{bb37}
\end{aligned}\eqno{(A.24)}
\end{equation*}
The combination of~(A.22)-(A.24) yields
\begin{equation*}
\begin{aligned}
&|\psi_{1}^{*}(t)-\psi_{2}^{*}(t)|\notag\\
\leq&\frac{1}{\Xi_{\psi,0}}\frac{\exp\big((1+C_{\Xi}\varepsilon_{2})\Xi_{\psi,0}T\big)-1}
{1-\exp(-(1-C_{\Xi}\varepsilon_{2})\Xi_{\psi,0}T)}\sum_{i=1}^{3}\Xi_{\omega_{i,0}}\|{\omega_{1}}_{i}-{\omega_{2}}_{i}\|\notag\\
\leq&(1+C_{\Xi}T\varepsilon_{2})\exp(\Xi_{\psi,0}T)\frac{1}{\Xi_{\psi,0}}\sum_{i=1}^{3}\Xi_{\omega_{i,0}}\|{\omega_{1}}_{i}-{\omega_{2}}_{i}\|
,
\end{aligned}\eqno{(A.25)}
\end{equation*}
which indicates~\eqref{bb33}. In order to prove~\eqref{bb34}, we use~\eqref{bb28}-\eqref{bb33} to obtain
\begin{equation*}
\begin{aligned}
&|{{\psi}_{1}^{*}}'(t)-{{\psi}_{2}^{*}}'(t)|\notag\\
=&|\Xi(\psi^{*}_{1}(t), {\omega_{1}}_{i}(t,\psi^{*}_{1}(t)))
-\Xi(\psi^{*}_{2}(t), {\omega_{2}}_{i}(t,\psi^{*}_{2}(t)))|\notag\\
\leq&|\Xi(\psi^{*}_{1}(t), {\omega_{1}}_{i}(t,\psi^{*}_{1}(t)))
-\Xi(\psi^{*}_{2}(t), {\omega_{2}}_{i}(t,\psi^{*}_{1}(t)))|\notag\\
&+|\Xi(\psi^{*}_{2}(t), {\omega_{2}}_{i}(t,\psi^{*}_{1}(t)))
-\Xi(\psi^{*}_{2}(t), {\omega_{2}}_{i}(t,\psi^{*}_{2}(t)))|\notag\\
\leq&(1+C\varepsilon_{2})\Xi_{\psi,0}\|\psi_{1}^{*}-\psi_{2}^{*}\|
+(1+C\varepsilon_{2})\sum_{i=1}^{3}\Xi_{\omega_{i,0}}\|{\omega_{1}}_{i}-{\omega_{2}}_{i}\|
+(1+C\varepsilon_{2})\sum_{i=1}^{3}\Xi_{\omega_{i,0}}\|\frac{{\omega_{2}}_{i}}{\partial\psi}\|
\|\psi_{1}^{*}-\psi_{2}^{*}\|\notag\\
\leq&(1+C_{\Xi}T\varepsilon_{2})\exp(\Xi_{\psi,0}T)\sum_{i=1}^{3}\Xi_{\omega_{i,0}}\|{\omega_{1}}_{i}-{\omega_{2}}_{i}\|
+(1+C_{\Xi}\varepsilon_{2})\sum_{i=1}^{3}\Xi_{\omega_{i,0}}\|{\omega_{1}}_{i}-{\omega_{2}}_{i}\|,\label{bb39}
\end{aligned}\eqno{(A.26)}
\end{equation*}
which leads to~\eqref{bb34} and finishes the proof of~Theorem~A.2.

\section*{Acknowledge}
 \indent\indent The authors would like to give many thanks to Professors Huicheng Yin and Hairong Yuan for their encouragement and discussion.~Peng Qu is supported in part by NSFC Grants No.~12431007. Huimin Yu is supported in part by NSFC Grant No. 12271310 and Natural Science Foundation of Shandong Province. ZR2022MA088.

\end{sloppypar}
\end{document}